\renewcommand{\H}{\mathbb{H}}
\newcommand{\B}{\mathbb{B}}
\newcommand{\G}{\mathbb{G}}
\newcommand{\K}{\mathbb{K}}
\newcommand{\M}{\mathbb{M}}
\newcommand{\N}{\mathbb{N}}
\newcommand{\R}{\mathbb{R}}
\newcommand{\T}{\mathbb{T}}
\newcommand{\U}{\mathbb{U}}
\newcommand{\V}{\mathbb{V}}
\newcommand{\W}{\mathbb{W}}
\newcommand{\elle}{\mathbb{L}}
\newcommand{\cF}{\mathcal{F}}
\newcommand{\cH}{\mathcal{H}}
\newcommand{\cL}{\mathcal{L}}
\newcommand{\cP}{\mathcal{P}}
\newcommand{\cS}{\mathcal{S}}
\newcommand{\cV}{\mathcal{V}}
\newcommand{\cp}{\mathfrak{p}}
\newcommand{\cs}{\mathfrak s}
\newcommand{\ep}{\varepsilon}
\newcommand{\sm}{\setminus}
\newcommand{\ccS}{\mathfrak{S}}
\newcommand{\diam}{\mbox{\rm diam}}
\newcommand{\spn}{\mbox{\rm span}}
\newcommand{\n}{\mathrm n}
\newcommand{\lan}{\langle}
\newcommand{\ran}{\rangle}
\newcommand{\res}{\mbox{\LARGE{$\llcorner$}}}
\newcommand{\der}{\partial}
\newcommand{\beqas}{\begin{eqnarray*}}
\newcommand{\eeqas}{\end{eqnarray*}}
\newcommand{\beqa}{\begin{eqnarray}}
\newcommand{\eeqa}{\end{eqnarray}}
\newcommand{\beq}{\begin{equation}}
\newcommand{\eeq}{\end{equation}}
\newcommand{\bce}{\begin{center}}
\newcommand{\ece}{\end{center}}
\newcommand{\pa}[1]{\left( #1 \right)}               
\newcommand{\set}[1]{\left\{ #1 \right\}}            
\newtheorem{The}{Theorem}[section]
\newtheorem{Lem}[The]{Lemma}
\newtheorem{Def}[The]{Definition}
\newtheorem{Pro}[The]{Proposition}
\newtheorem{Cor}[The]{Corollary}
\newtheorem{Exa}[The]{Example}
\newtheorem{Con}{Conjecure}
\newcommand{\bt}{\begin{The}}
\newcommand{\et}{\end{The}}
\newcommand{\bl}{\begin{Lem}}
\newcommand{\el}{\end{Lem}}
\newcommand{\bd}{\begin{Def}\rm}
\newcommand{\ed}{\end{Def}}
\newcommand{\br}{\begin{Rem}\rm}
\newcommand{\er}{\end{Rem}}
\newcommand{\bpr}{\begin{Pro}}
\newcommand{\epr}{\end{Pro}}
\newcommand{\bc}{\begin{Cor}}
\newcommand{\ec}{\end{Cor}}
\newcommand{\bj}{\begin{Con}}
\newcommand{\ej}{\end{Con}}
\newcommand{\bex}{\begin{Exa}}
\newcommand{\eex}{\end{Exa}}
\newcommand{\bV}{\mathbf{V}}
\newcommand{\bW}{\mathbf{W}}
\newcommand{\bU}{\mathbf{U}}
\newcommand{\IL}{\mathcal{I}\hskip0.0001mm\mathcal{L}}
\renewcommand{\graph}{\mathrm{graph}}
\newtheorem{teo}{Theorem}[section]
\newtheorem{prop}[teo]{Proposition}
\newtheorem{lem}[teo]{Lemma}
\theoremstyle{definition}
\newtheorem{deff}[teo]{Definition}
\newtheorem{Remark}[teo]{Remark}
\begin{document}

\title[Area of intrinsic graphs in homogeneous groups]{Area of intrinsic graphs in homogeneous groups}

\author{Francesca Corni}
\address{Francesca Corni: Dipartimento di Matematica\\ Universit\`a di Bologna\\ Piazza di Porta S.Donato 5\\ 40126, Bologna, Italy}
\email{francesca.corni3@unibo.it}
\author{Valentino Magnani}
\address{Valentino Magnani: Dipartimento di Matematica\\ Universit\`a di Pisa\\ Largo Bruno Pontecorvo 5\\ 56127, Pisa, Italy}
\email{valentino.magnani@unipi.it}

\begin{abstract}
We establish an area formula for the spherical measure of intrinsic graphs of any codimension in homogeneous groups. Our approach relies on the assumption that the map defining the intrinsic graph is continuously intrinsically differentiable. The main novelty is a notion of Jacobian defined using an auxiliary scalar product.
\end{abstract}

\thanks{F.C. is partially supported by INDAM--GNAMPA-2023 project: {\it Equazioni completamente non lineari locali e non locali}. V.M. is partially supported by the APRISE - {\em Analysis and Probability in Science} project, funded by the University of Pisa, grant PRA 2022 85, 
by PRIN 2022PJ9EFL {\em Geometric Measure Theory: Structure of Singular Measures, Regularity Theory and Applications in the Calculus of Variations}, funded by the European Union--NextGenerationEU", CUP:E53D23005860006, and by
the MIUR Excellence Department Project awarded to the Department of Mathematics, University of Pisa, CUP I57G22000700001.}

\subjclass[2020]{Primary 28A75; Secondary 53C17, 22E30}

\date{\today}

\keywords{Homogeneous group, area formula, spherical measure, homogeneous distance, intrinsic graph, intrinsic differentiability, spherical factor}

\maketitle

\tableofcontents

\section{Introduction}

In the last two decades, the study of Geometric Measure Theory in a non-Euclidean framework has attracted a great deal of attention and interest. While on one side the results in this setting also include the classical ones in Euclidean spaces, more general approaches are necessary to work with both a noncommutative group operation and a distance that is not bi-Lipschitz equivalent to the Euclidean distance. 

In the present work, we consider noncommutative homogeneous groups, which arise from Harmonic Analysis, \cite{Fol75,SteinICM1977,FS82,Stein93}.
Our aim is to find an area formula for the spherical measure of an intrinsically regular submanifold in an arbitrary homogeneous group. The intrinsic regularity is motivated by the theory of sets of finite perimeter in Carnot groups, \cite{FSSC01,FSSC5,AKLD2009TangSpGro,Mag31,DonLDMV2022}. 
We focus on intrinsic graphs, which define the notion of intrinsic rectifiable set. Their geometric and regularity properties have been studied in several papers, including \cite{FSSC06, ASCV06, AS2009, FMS14, FranchiSerapioni2016IntrLip, SerraCassano2016}, but the list could be enlarged.

An important aspect is that intrinsic regular graphs, defined below, can be in general only H\"older continuous with respect to the Euclidean distance. They need not even be rectifiable in the Euclidean sense, and their low regularity is the fundamental difficulty, since the standard approaches from Geometric Measure Theory in Euclidean spaces do not apply. For this reason, combining the low Euclidean regularity of intrinsic graphs and fine notions of {\em intrinsic regularity} represents an intriguing problem, which in recent years has led to important contributions, applications, and challenging questions. To give an idea of the intense research activity on this subject, we limit ourselves to mentioning only some related papers,
\cite{ASCV06, FSSC6, BSC10DistSol, FMS14, BigCarSerC2015, FranchiSerapioni2016IntrLip, NY18, CFO2019beta, CFO2019bound, FOR20, JNGV21, JNGV22, DFO2022, ADDD22, Vit22, CorMag22, AntMer22, Mer22, Mer23, NY22, CLY22, DDF22, DonMag2023, ADDDLD24, AntYou24pr, CorMag25, DiMJulNGoloVit25, FraPan25pr, HakHeiIko25pr}, being aware that this list is far from being complete. 

The natural notion of area, which takes into account the geometry of the group, is given by the spherical measure, the Hausdorff measure, or the centered Hausdorff measure, constructed by a fixed homogeneous distance. Then several versions of the area formula are also available for higher codimensional {\em intrinsic regular graphs} or sets of finite perimeter
\cite{FSSC01,FSSC03,FSSC5,FSSC6,CMPSC14,FSSC15,SerraCassano2016, Mag31,CorMag22,JNGV22,AntMer22,ADDDLD24}.
To stress the difference between rectifiability in Carnot groups and the classical notion of rectifiability in Euclidean spaces, we mention that a set rectifiable with respect to the former definition is not necessarily rectifiable in the classical sense of Geometric Measure Theory in Euclidean spaces, see \cite{KirSer04,MagMir26inpr}.

We consider a {\em homogeneous group} $\G$ equipped with a homogeneous distance $d$ and a left invariant graded Riemannian metric. We take a couple of complementary subgroups $(\W,\V)$ of $\G$ and a map $\phi: A \to \V$, with $A \subset \W$ open, see Section~\ref{sect:NotFactHom}. We assume that $\phi$ is intrinsically differentiable at every point of $A$ (Definition~\ref{d:intrinsicDiff}) and that the intrinsic differential $A \ni w \to d\phi_w$ is a continuous map. 
The main result of the paper is an area formula for the spherical measure of the set 
$$
\Sigma=\graph\{ \phi \}= \{ w \phi(w) : w \in A \}
$$
with respect to an arbitrary homogeneous distance $d$. The previous set is the {\em intrinsic graph} of $\phi$. Its associated graph map is denoted by $\Phi:A \to \Sigma$, $\Phi(w)=w \phi(w)$. 

First of all, it is important to stress that the point of any area formula is to provide an effective way to compute the Hausdorff (or spherical) measure of a parametrized set. To reach this objective, an explicit formula for the Jacobian of the parametrization is necessary. For a very general notion of Jacobian, that may not be easy to compute, an area formula can be stated in metric spaces with no differentiable structure, \cite{Mag2011cm}.
When the metric space possesses a rich (metric and algebraic) structure, as a homogeneous group, then finding a more explicit notion of Jacobian becomes the first important question. The notion should involve a ``suitable differential'' of the parametrization and possibly an explicit formula in terms of suitable local coordinates. 

The central novelty of this work exactly lies in a new and explicit notion of Jacobian for intrinsic graph mappings in general homogeneous groups. Let us denote by $m$ the topological dimension of the subgroup $\W$.
Then the intrinsic Jacobian of the graph mapping $\Phi$ at $w \in A$ is
\begin{equation}\label{intro:uno}
	J\Phi(w)=\frac{\mathcal{H}_{|\cdot|}^m(G(d\phi(w))(B))}{\mathcal{H}^m_{|\cdot|}(B)},
\end{equation}
where $G(d\phi(w)): \W \to \G$ is the graph map associated with the intrinsic differential $d \phi_w$, $\mathcal{H}_{|\cdot|}^m$ is the Hausdorff measure corresponding to the fixed scalar product on $\G$ and $B \subset \W$ is a Borel set with positive measure. 
The point of \eqref{intro:uno} is that it can be explicitly computed once we have the intrinsic differential. It is also in perfect analogy with the Jacobian for homogeneous homomorphisms between stratified groups, \cite[Definition~10]{Mag}. Let us point out that stratified groups, also known as Carnot groups, are a special case of homogeneous groups. Indeed, although many motivations come from Carnot groups, our results are achieved in the general framework of homogeneous groups.

Broadly speaking, the idea behind the definition \eqref{intro:uno} is to perform two ``linearization processes'' on the intrinsic graph. The first one is provided by the intrinsic differentiability of $\phi$, hence leading to the graph map $G(d\phi(w))$ of the intrinsic differential. The second one is a
``Euclidean linearization'' of $G(d\phi(w))$ arising from the classical area formula. 

However, a first difficulty in the above definition of Jacobian emerges, since the graph map $G(d\phi(w))$ is not linear,
hence the independence of \eqref{intro:uno} from the set $B$ is not straightforward.
The proof of this invariance with respect to $B$ is given in Proposition~\ref{propjac}
and it relies on a highly nontrivial algebraic lemma, see Lemma~\ref{lm:MVPsubgroups}.
The important role of this technical lemma will be discussed below. 
Using \eqref{intro:uno}, we can introduce the natural measure associated with $\Sigma$ as 
\begin{equation}
\mu(B)= \int_{\Phi^{-1}(B)} J\Phi(w)\  d \mathcal{H}_{|\cdot|}^m (w),
\end{equation}
for every Borel set $B \subset \G$. Thus, the {\em measure-theoretic area formula} of \cite[Theorem~5.7]{LecMag22} shows that our problem boils down to computing the spherical Federer density $\cs^M(\mu,x)$ of the measure $\mu$ at $x$ (Definition~\ref{def:sphefederer}), where $x\in \Sigma$ and
$M$ is the Hausdorff dimension of the subgroup $\W$. The Federer density was first introduced in \cite{Mag30}, for Borel regular measures in metric spaces.
In our context, the computation of this density is not easy, since it is related to the geometry of both the intrinsic graph and the group.

The crucial tool to determine the Federer density is the upper blow-up of Theorem~\ref{theorem:ubu},
which is the first key result of this paper. We state it in Theorem~\ref{theorem:ubuIntro}.
For more information about the notions appearing in this theorem, we refer to Sections~\ref{sect:compendium} and \ref{sect:NotFactHom}. 
The family of intrinsic linear maps $\IL(\W,\V)$ is introduced in Section~\ref{sect:intrdiff}.
 \begin{teo}[Upper blow-up]\label{theorem:ubuIntro}
	Let $(\W,\V)$ be a couple of complementary subgroups of $\G$. Let $m$ and $M$ be the topological and the Hausdorff dimensions of $\W$, respectively. We consider an open set $A \subset \W$ and $\phi:A \to \V$. We also assume that $\phi$ is intrinsically differentiable at any point of $A$ and that $d\phi:A \to \IL(\W,\V)$ is  continuous.
	We set $\U_{w}=\mathrm{graph}(d\phi_{w})$ for every $w \in A$. Let 
	$\Phi:A\to\G$ be the graph map of $\phi$ and let us introduce the following measure
	\begin{equation}
		\mu(B)= \int_{\Phi^{-1}(B)} J\Phi(w)\  d \mathcal{H}_{|\cdot|}^m (w)
	\end{equation}
	for every Borel set $B \subset \mathbb{G}$. 
	Setting $\Sigma=\Phi(A)$, for every $x=\Phi(\zeta) \in \Sigma$ we have 
\begin{equation}\label{eq:introFedDensEq}
	\cs^M( \mu, x)= \ \beta_{d}( \U_{\zeta}).
\end{equation}
\end{teo}
In a few words, according to \eqref{eq:introFedDensEq}, we prove that for $x=\Phi(\zeta) \in \Sigma$, the spherical Federer density equals a geometric constant $\beta_{d}( \U_{\zeta})$,
associated with the homogeneous subgroup given by $\U_\zeta=\graph(d\phi_\zeta)$.  
The symbol $\beta_d(V)$ for a general linear subspace $V \subset \G$ is
the {\em spherical factor} (Definition~\ref{def:sphfactor}), which takes into account both the homogeneous 
distance used to construct the spherical measure and the fixed scalar product on $\G$. 
It generalizes the geometric constant appearing in the definition of the $k$-dimensional Hausdorff measure in Euclidean space, given by the Lebesgue measure of the unit Euclidean ball of $\R^k$. 

Combining Theorem~\ref{theorem:ubuIntro} and the measure-theoretic area formula 
of Theorem~\ref{thm:metarea}, we reach a general area formula
for the spherical measure of intrinsic graphs in homogeneous groups, which is our central result.

\begin{teo}[Area formula]\label{teo:areaIntro}
We consider a couple $(\W,\V)$ of complementary subgroups of $\G$. Let $m$ and $M$ be the topological and the Hausdorff dimensions of $\W$, respectively. We consider an open set $A \subset \W$ and a mapping $\phi:A \to \V$. We also assume that $\phi$ is intrinsically differentiable at any point of $A$ and that $d\phi:A \to \IL(\W,\V)$ is  continuous.
	Setting $\Sigma=\Phi(A)$, where $\Phi$ is the graph map of $\phi$, then for every Borel set $B \subset \Sigma $, we have the formula
\begin{equation}\label{intro:areageneral}
		\int_{\Phi^{-1}(B)} J\Phi(w)\  d \mathcal{H}_{|\cdot|}^{m} (w)= \int_B \beta_{d}(\T_x) \ d \mathcal{S}^{M}(x),
\end{equation}
	where $\T_x$ is the tangent subgroup to $\Sigma$ at $x$. 
\end{teo}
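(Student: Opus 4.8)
The plan is to read \eqref{intro:areageneral} as a concrete instance of the measure-theoretic area formula of Theorem~\ref{thm:metarea}, applied to the measure $\mu$ of the statement, with the spherical Federer density $\cs^M(\mu,\cdot)$ computed by the upper blow-up Theorem~\ref{theorem:ubuIntro}. Accordingly the argument splits into a ``soft'' part, in which $\mu$ is cast into the form required by Theorem~\ref{thm:metarea} and rewritten through $\cs^M(\mu,\cdot)$ and $\mathcal S^M$, and the substitution of the explicit value $\cs^M(\mu,x)=\beta_d(\T_x)$.

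First I would record that the set function $\mu(B)=\int_{\Phi^{-1}(B)}J\Phi(n)\,d\mathcal H^m_{|\cdot|}(n)$, defined on Borel $B\subset\G$, is a genuine Borel measure concentrated on $\Sigma$. Indeed $\Phi$ is a homeomorphism of $A$ onto $\Sigma$, its inverse being the restriction to $\Sigma$ of the continuous projection of $\G$ onto $\W$ along $\V$, so $\Phi^{-1}(B)$ is a Borel subset of $A$ whenever $B$ is Borel; moreover $w\mapsto J\Phi(w)$ is continuous and strictly positive on $A$, by the continuity of $d\phi:A\to\IL(\W,\V)$ together with the well-posedness of \eqref{intro:uno} established in Proposition~\ref{propjac}, so $\mu$ is finite on compact sets. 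I would then verify the structural hypotheses of Theorem~\ref{thm:metarea}: that the restriction of $\mathcal S^M$ to $\Sigma$ is $\sigma$-finite, and that $\mu$ and $\mathcal S^M\res\Sigma$ are mutually absolutely continuous. Both follow from the output of Theorem~\ref{theorem:ubuIntro}: since $\cs^M(\mu,\Phi(\zeta))=\beta_d(\U_\zeta)$ with $\U_\zeta=\graph(d\phi_\zeta)$, and $d\phi$ is continuous while the spherical factor is finite and strictly positive, the density $\cs^M(\mu,\cdot)$ is locally bounded above and below by positive constants on $\Sigma$; the standard comparison estimates between a Borel measure and the spherical measure in terms of the spherical density then give that $\mathcal S^M\res\Sigma$ is locally finite, hence $\sigma$-finite, and that $\mu\ll\mathcal S^M\res\Sigma$ together with the converse. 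With these facts in place, Theorem~\ref{thm:metarea} applies to $\mu$ and yields, for every Borel $B\subset\Sigma$,
\[
\int_{\Phi^{-1}(B)}J\Phi(n)\,d\mathcal H^m_{|\cdot|}(n)=\mu(B)=\int_B\cs^M(\mu,x)\,d\mathcal S^M(x).
\]

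Finally I would evaluate the integrand. We already know from Theorem~\ref{theorem:ubuIntro} that $\cs^M(\mu,x)=\beta_d(\U_\zeta)$ for $x=\Phi(\zeta)\in\Sigma$, with $\U_\zeta=\graph(d\phi_\zeta)$; and $\U_\zeta$ is precisely the tangent subgroup $\T_x$ of $\Sigma$ at $x$, since intrinsic differentiability of $\phi$ at $\zeta$ means exactly that the blow-up of $\Sigma$ at $x$ with respect to the intrinsic dilations equals $\graph(d\phi_\zeta)$, which is the defining property of $\T_x$. Substituting $\beta_d(\U_\zeta)=\beta_d(\T_x)$ into the displayed identity gives \eqref{intro:areageneral}. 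I expect the only genuinely delicate points to be the verification of the structural hypotheses of Theorem~\ref{thm:metarea} --- in particular the $\sigma$-finiteness of $\mathcal S^M\res\Sigma$ and the $\mathcal S^M$-measurability of $x\mapsto\beta_d(\T_x)$, which should follow from the continuity of $d\phi$ and the explicit dependence of the spherical factor on its linear argument --- together with the clean identification $\U_\zeta=\T_x$; once Theorem~\ref{theorem:ubuIntro} is granted, everything else is a direct substitution.
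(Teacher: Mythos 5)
Your proposal is correct and follows essentially the same route as the paper's proof of Theorem~\ref{teo:area}: compute the spherical Federer density via the upper blow-up theorem, identify $\graph(d\phi_\zeta)$ with the tangent subgroup $\T_x$ through Theorem~\ref{characterizationP}, verify the hypotheses of the measure-theoretic area formula of Theorem~\ref{thm:metarea}, and substitute. The only cosmetic difference is that the paper disposes of the absolute continuity $\mu\res\Sigma\ll\cS^M\res\Sigma$ by citing \cite[Proposition~3.3]{LecMag21} and of condition (4) by noting that the density is everywhere positive and finite, whereas you sketch the underlying density-comparison estimates directly; both come to the same thing.
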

We only assume that the mapping $\phi:A\to\V$ is intrinsically differentiable with continuous intrinsic differential, that is a rather natural assumption.
The proof of this theorem corresponds to that of Theorem~\ref{teo:area}. 
The symbol $\T_x$ denotes the tangent subgroup to $\mathrm{graph}(\phi)$ at $x$ (Definition~\ref{TangenteP}) and $\cS^M$ denotes the $M$-dimensional spherical measure with respect to the fixed distance $d$ (Section~\ref{sect:FedDens}).

We first emphasize that the area formula \eqref{intro:areageneral} does not require that $\G$ is stratified. The proof of \eqref{intro:areageneral} only involves the intrinsic differentiability of the parametrizing map $\phi$, that corresponds to the blow-up of its intrinsic graph. A crucial step is to obtain
another {\em representation of the intrinsic Jacobian} $J\Phi(w)$.
In fact, setting $\U_w=\mathrm{graph}(d\phi_w)$ for every $w \in A$, by interpreting the graph map $w\to wd \phi(w)$ as the restricted group projection $\pi_{\U_w}^{\U_w, \V} |_{\W}$ associated with the splitting $(\U_w, \V)$, we obtain 
\begin{equation} \label{intro:tre}
J \Phi(w)=\frac{|\bV \wedge \bW|}{|\bV \wedge \bU_{w}|},
\end{equation}
where $\bV$ is an orienting unit $p$-vector of $\V$ and $\bW$, $\bU_{w}$ are orienting unit $(q-p)$-vectors of $\W$ and $\U_{w}$, respectively (Proposition~\ref{propjac}).
 The symbol $| \cdot |$ also denotes the norm on multivectors, which is associated with the fixed scalar product. 
 
To prove \eqref{intro:tre} we use Lemma~\ref{lm:MVPsubgroups}, which is a fundamental technical tool of the paper. The proof of this lemma passes through a number of nontrivial algebraic steps, involving the change of variables associated with the restriction of group projections. We may expect that this technical result can have its own independent interest in further developments.
 
Formula \eqref{intro:tre} is crucial to establish the continuity of $ A \ni w \to J\Phi(w)$, which is proved in Proposition~\ref{cont-jac}. Let us emphasize that such continuity does not seem achievable by elementary tools, since a representation of the intrinsic Jacobian by suitable partial derivatives is an open question in our general framework.

To make the area formula of Theorem~\ref{teo:areaIntro} more manageable for applications, we point out those cases where the area formula \eqref{intro:areageneral} takes a simpler form. We consider special classes of homogeneous distances, which are invariant under suitable families of symmetries, according to \cite[Definition 1.2]{Mag22RS}. If $\cF$ is a nonempty family of homogeneous subspaces, a homogeneous distance $d$ on a homogeneous group $\G$ is called \textit{rotationally symmetric with respect to $\cF$} if the spherical factor $\beta_d(\cdot)$ is a constant function on $\cF$. The constant value of $\beta_d(\cdot)$ is denoted by $\omega_d(\cF)$. An immediate consequence of these notions is the following ``more manageable'' area formula.

\begin{teo}\label{teo:areasymm}
	Let $(\W,\V)$ be a couple of complementary subgroups of $\G$ and denote by $m$ and $M$ the topological and the Hausdorff dimensions of $\W$, respectively. We consider an open set $A \subset \W$ and a mapping $\phi:A \to \V$. We assume that $\phi$ is intrinsically differentiable at any point of $A$ and that $d\phi:A \to \IL(\W,\V)$ is  continuous. We set $\Sigma=\Phi(A)$ and suppose that $d$ is rotationally symmetric with respect to 
	$$
	\mathcal{F}_{\V}= \{ \W' \subset \G : \W' \mathrm{ \ homogeneous \ subgroup \ complementary \ to \ } \V \}.
	$$ 
	Thus, setting $\mathcal{S}_d^M=\omega_d(\mathcal{F}_{\V}) \mathcal{S}^M$, for every Borel set $B \subset \Sigma $ we have
	\begin{equation} 
		\mathcal{S}_d^M \llcorner \Sigma(B)=\int_{\Phi^{-1}(B)} J\Phi(w)\  d \mathcal{H}_{|\cdot|}^{m} (w).
	\end{equation}
\end{teo}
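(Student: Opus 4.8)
The plan is to derive Theorem~\ref{teo:areasymm} directly as a special case of the general area formula of Theorem~\ref{teo:areaIntro} (equivalently Theorem~\ref{teo:area}), specializing to a homogeneous distance $d$ that is rotationally symmetric with respect to $\cF_\V$. The key observation is the following: for $x = \Phi(\zeta) \in \Sigma$, the tangent subgroup $\T_x$ to $\graph(\phi)$ at $x$ is precisely (a left translate of, hence as a homogeneous subspace equal to) $\U_\zeta = \graph(d\phi_\zeta)$, which by construction is a homogeneous subgroup complementary to $\V$. Therefore $\T_x \in \cF_\V$ for every $x \in \Sigma$. Since $d$ is rotationally symmetric with respect to $\cF_\V$, the function $x \mapsto \beta_d(\T_x)$ is constant on $\Sigma$, with constant value $\omega_d(\cF_\V)$.

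\medskip
Concretely, I would proceed as follows. First, I would recall that $\U_\zeta = \graph(d\phi_\zeta)$ is a homogeneous subgroup of $\G$ and that $(\U_\zeta,\V)$ is a couple of complementary subgroups; this is part of the notion of intrinsic linear map in $\IL(\W,\V)$ (Section~\ref{sect:intrdiff}), and it is exactly the content used in Theorem~\ref{theorem:ubuIntro}. Hence $\U_\zeta$ belongs to the family
$$
\cF_\V = \{\W \subset \G : \W \text{ homogeneous subgroup complementary to } \V\}.
$$
Second, I would invoke the identification of the tangent subgroup $\T_x$ with $\U_\zeta$ (Definition~\ref{TangenteP} together with Theorem~\ref{theorem:ubuIntro}, where the Federer density at $x=\Phi(\zeta)$ is shown to be $\beta_d(\U_\zeta)$): in particular $\T_x$ is, as a homogeneous subspace, an element of $\cF_\V$. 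Third, by the definition of rotational symmetry with respect to $\cF_\V$, the spherical factor $\beta_d(\cdot)$ restricted to $\cF_\V$ is the constant $\omega_d(\cF_\V)$, so $\beta_d(\T_x) = \omega_d(\cF_\V)$ for $\cS^M$-a.e.\ $x \in \Sigma$ (indeed for every $x \in \Sigma$).

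\medskip
Finally, I would substitute this constant value into the area formula \eqref{intro:areageneral} of Theorem~\ref{teo:areaIntro}: for every Borel set $B \subset \Sigma$,
$$
\int_{\Phi^{-1}(B)} J\Phi(n)\, d\cH^m_{|\cdot|}(n) = \int_B \beta_d(\T_x)\, d\cS^M(x) = \omega_d(\cF_\V) \int_B d\cS^M(x) = \omega_d(\cF_\V)\, \cS^M(B).
$$
Since $B \subset \Sigma$, we have $\cS^M(B) = \cS^M \llcorner \Sigma(B)$, and by the definition $\cS^M_d = \omega_d(\cF_\V)\, \cS^M$ the right-hand side equals $\cS^M_d \llcorner \Sigma(B)$, which is the claimed identity. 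This completes the argument.

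\medskip
The main (and essentially only) obstacle is conceptual bookkeeping rather than a genuine difficulty: one must be careful that the tangent subgroup $\T_x$ at each point $x \in \Sigma$ really lands in the family $\cF_\V$, i.e.\ that the intrinsic differential at every point produces a graph over $\W$ that is complementary to $\V$ — this is guaranteed by $d\phi_w \in \IL(\W,\V)$ and the hypothesis that $d\phi$ takes values in $\IL(\W,\V)$. Once this membership is in hand, everything reduces to plugging a constant into the already-established general area formula; no new estimates, covering arguments, or blow-up analysis are required beyond what Theorem~\ref{teo:areaIntro} (and behind it Theorem~\ref{theorem:ubuIntro} and the measure-theoretic area formula) already provides.
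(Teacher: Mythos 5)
Your proposal is correct and follows essentially the same route as the paper: the authors also derive Theorem~\ref{teo:areasymm} immediately from Theorem~\ref{teo:areaIntro} by observing that $\T_x=\graph(d\phi_\zeta)$ is a homogeneous subgroup complementary to $\V$ (the paper cites Theorem~\ref{characterizationP} for this membership in $\cF_\V$, which is the cleanest reference), so that $\beta_d(\T_x)\equiv\omega_d(\cF_\V)$ can be pulled out of the integral.
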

The proof of the previous theorem immediately follows from Theorem \ref{teo:areaIntro}, once we know
that the tangent subgroup $\T_x$ belongs to $\cF_\V$ for every $x\in\Sigma$, which is a consequence
of Theorem~\ref{characterizationP}. 
Finding the symmetry conditions which give the hypotheses of Theorem~\ref{teo:areasymm} deserves a separate study, see \cite{Mag31} and
\cite{Mag22RS}. The latter paper includes some classes of higher codimensional, smooth submanifolds. A simple application of Theorem~\ref{teo:areasymm} is also provided at the end of the paper. 

We consider two special cases of Theorem~\ref{teo:areaIntro}. The first one concerns the intrinsic graphs of uniformly intrinsically differentiable maps. From Proposition~\ref{uidimpliescont}, this class of maps is continuously intrinsically differentiable. Then Theorem~\ref{teo:areaIntro} immediately gives Theorem~\ref{teo:areauid}. 
We stress that the family of graphs for which Theorem~\ref{teo:areaIntro} holds might be strictly larger than those of Theorem~\ref{teo:areauid}. In fact, in our general framework, it is an interesting open question to establish whether continuously intrinsically differentiable maps are uniformly intrinsically differentiable, see \cite[Corollary 4.7 and Remark 4.8]{ADDDLD24}. 

The second special case of Theorem~\ref{teo:areaIntro} refers to $(\G,\M)$-regular sets of $\G$, introduced in \cite{Mag5,Mag14} (Definition~\ref{def:GMregular}), where $\G$ and $\M$ denote two stratified Lie groups, each equipped with a homogeneous distance. This notion of regular set was also considered in \cite[Section~2.5]{JNGV22} under the terminology  ``{\em submanifold of class $C^1_H$}'' or ``{\em $C^1_H(\G;\M)$-submanifold''}.

Roughly speaking, $\Sigma \subset \G$ is a $(\G,\M)$-regular set of $\G$ if it can be locally seen as the level set of a {\em suitable} continuously differentiable map $f$ from an open subset of $\G$ to the other stratified group $\M$. Precisely, the differentiability 
is meant with respect to the homogeneous structure of the group and the corresponding differential $Df(x)$, $x\in\Sigma$, must be 
an {\em h-epimorphism} (\cite[Definition~2.5]{Mag14}), namely it is surjective and there exists a homogeneous subgroup $\V$ of $\G$ complementary to $\ker(Df(x))$. 
Under this condition on the defining map, a general implicit function theorem holds for differentiable mappings between stratified groups, \cite[Theorem 1.4]{Mag14}, see Section~\ref{secf:difimpl}.
 
Now we wish to consider Theorem~\ref{teo:areaIntro} in the case where $\G$ and $\M$ are stratified groups and the intrinsic graph $\Sigma$ is a $(\G,\M)$-regular set of $\G$. Let $Q$ and $P$ denote the Hausdorff dimensions of $\G$ and $\M$, respectively. We also indicate the topological dimensions of $\G$ and $\M$ by the integers $q$ and $p$, respectively. The symbols $J_Hf$ and $J_{\V}f$ denote the Jacobians of $Df$ and $Df|_{\V}$, respectively (Definition~\ref{def:jacf}). 
The assumptions of Theorem~\ref{teo:areaIntro} require that the implicit map of \cite[Theorem 1.4]{Mag14} is intrinsically differentiable with continuous
intrinsic differential.
By virtue of \cite[Theorem~4.3.7]{CorniPhD}, see Theorem~\ref{intdiffpar}, the implicit map of the implicit function theorem is also uniformly intrinsically differentiable, hence its intrinsic differential is continuous (Proposition~\ref{uidimpliescont}).
As a consequence of Theorem~\ref{teo:areaIntro},
we are led to a special form of the area formula for the class of $(\G,\M)$-regular sets of $\G$,
see Theorem~\ref{teo:arealevelset}.

In this theorem the Jacobian of the parametrization can be also related to the locally defining map $f$ by an intriguing ``algebraic representation'' 
\beq\label{intro:duejac}
J\Phi(w) =  |\bV \wedge \bW| \ \frac{ J_Hf(\Phi(w))}{J_{\V}f(\Phi(w))}.
\eeq
The previous formula provides another way to compute the Jacobian and when combined with Theorem~\ref{teo:areaIntro}
leads us to the following theorem.

\begin{teo}\label{teo:arealevelset}
	Let $\G$ and $\M$ be stratified groups and let $\Omega \subset \G$ be an open set, with $f \in C^1_h(\Omega, \M)$. Consider 
	$\Sigma=f^{-1}(0)$ and assume that there exist an open set $\Omega' \subset \Omega$ and a homogeneous subgroup $\V \subset \G$ of topological dimension $p$ such that
	$J_\V f(y)> 0$
	for any $y \in \Sigma \cap \Omega'$. Let $\W \subset \G$ be a homogeneous subgroup complementary to $\V$ and consider the unique map $\phi:A\to\V$, whose graph mapping 
	$\Phi:A\to\G$ satisfies $\Sigma \cap \Omega' = \Phi(A)$, where $A\subset\W$ is an open set. If $\bV$ is an orienting unit $p$-vector of $\V$ and $\bW$ is an orienting unit $(q-p)$-vector of $\W$, then we have 
	\begin{equation}\label{formula-f}
		\int_B \beta_{d}(\T_x) \ d \mathcal{S}^{Q-P}(x)= | \bV \wedge \bW| \int_{\Phi^{-1}(B)} \frac{J_Hf(\Phi(w))}{J_{\V}f(\Phi(w))} \  d \mathcal{H}_{|\cdot|}^{q-p} (w)
	\end{equation}
	for every Borel set $B \subset \Sigma \cap \Omega'$,
	where $\T_x$ is the tangent subgroup to $\Sigma$ at $x$. 
\end{teo}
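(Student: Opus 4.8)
\textbf{Proof plan for Theorem~\ref{teo:arealevelset}.}

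The strategy is to deduce this theorem from the general area formula of Theorem~\ref{teo:areaIntro} by identifying the intrinsic graph structure hidden inside the level set $\Sigma = f^{-1}(0)$ and then rewriting the intrinsic Jacobian in terms of $f$. First I would invoke the implicit function theorem \cite[Theorem~1.4]{Mag14}: since $f \in C^1_h(\Omega,\M)$ and $J_\V f(y) > 0$ on $\Sigma \cap \Omega'$, the differential $Df(y)$ is an h-epimorphism with $\ker Df(y)$ complementary to $\V$ at each such $y$, so $\Sigma \cap \Omega'$ is locally the intrinsic graph of a unique map $\phi : A \to \V$ with $A \subset \W$ open, and $\Sigma \cap \Omega' = \Phi(A)$. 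By \cite[Theorem~4.3.7]{CorniPhD} (Theorem~\ref{intdiffpar} here), this $\phi$ is uniformly intrinsically differentiable, hence by Proposition~\ref{uidimpliescont} it is intrinsically differentiable at every point of $A$ with continuous intrinsic differential $d\phi : A \to \IL(\W,\V)$. Thus all hypotheses of Theorem~\ref{teo:areaIntro} are met, and for every Borel $B \subset \Sigma \cap \Omega'$ we get
\begin{equation*}
\int_{\Phi^{-1}(B)} J\Phi(n)\, d\mathcal{H}^{q-p}_{|\cdot|}(n) = \int_B \beta_d(\T_x)\, d\mathcal{S}^{Q-P}(x),
\end{equation*}
using that $m = q-p$ is the topological dimension of $\W$ and $M = Q-P$ its Hausdorff dimension (the latter because $\V$ has Hausdorff dimension $P$ as a complementary subgroup to $\ker Df$, a homogeneous subgroup isomorphic to the Lie algebra level structure carried over from $\M$).

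The remaining task is the algebraic identity \eqref{intro:duejac}, namely $J\Phi(n) = |\bV \wedge \bW|\, J_Hf(\Phi(n))/J_\V f(\Phi(n))$, which when substituted into the displayed formula yields \eqref{formula-f}. I would start from the representation \eqref{intro:tre} of Proposition~\ref{propjac}, $J\Phi(w) = |\bV \wedge \bW| / |\bV \wedge \bU_w|$ with $\bU_w$ an orienting unit $(q-p)$-vector of $\U_w = \graph(d\phi_w)$, so the claim reduces to
\begin{equation*}
\frac{1}{|\bV \wedge \bU_w|} = \frac{J_Hf(\Phi(w))}{J_\V f(\Phi(w))}.
\end{equation*}
Here the key geometric fact is that $\U_w = \ker Df(\Phi(w))$: indeed the tangent subgroup to a $(\G,\M)$-regular level set at $x$ is exactly $\ker Df(x)$, and on the other hand the tangent subgroup to the intrinsic graph at $x = \Phi(w)$ is $\graph(d\phi_w) = \U_w$ by the uniform intrinsic differentiability and Theorem~\ref{characterizationP}. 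With $\U_w = \ker Df(x)$ fixed, I would compute both Jacobians of $f$ through the fixed scalar product: $J_H f(x)$ is the Jacobian of the h-epimorphism $Df(x) : \G \to \M$, and $J_\V f(x)$ is the Jacobian of its restriction $Df(x)|_\V : \V \to \M$, which is a linear isomorphism since $\V$ is complementary to $\ker Df(x) = \U_w$. A standard multilinear-algebra computation — expressing $Df(x)$ in an orthonormal basis adapted to the orthogonal decomposition $\G = \U_w \oplus \U_w^\perp$ and using that $\V$ projects isomorphically onto a complement of $\U_w$ — gives $J_H f(x) = J_\V f(x) \cdot |\pi_{\U_w^\perp}|_\V|^{-1}$-type relations; more precisely one obtains $J_H f(x)/J_\V f(x) = 1/|\bV \wedge \bU_w|$ after identifying $|\bV \wedge \bU_w|$ with the absolute value of the determinant of the change of basis relating an orthonormal frame of $\G$ to the (non-orthonormal) frame obtained by juxtaposing orthonormal frames of $\V$ and of $\U_w$.

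The main obstacle is the clean verification of this last multilinear identity relating $J_Hf$, $J_\V f$ and the wedge norm $|\bV \wedge \bU_w|$. One must be careful that the Jacobians $J_Hf$ and $J_\V f$ in Definition~\ref{def:jacf} are defined with respect to the \emph{homogeneous} (stratified) structure and the fixed scalar products on $\G$ and $\M$, whereas $|\bV \wedge \bU_w|$ is a purely Euclidean quantity on $\Lambda^{q-p}\G$; the point is that the stratification-respecting basis of $\ker Df(x)$ and the Euclidean orthogonal complement interact so that the homogeneous normalizations cancel in the ratio, leaving only the Euclidean wedge. I would handle this by writing $Df(x)$ as a composition: the orthogonal projection $\G \to \U_w^\perp$ followed by a linear isomorphism $\U_w^\perp \to \M$, compute $J_H f(x)$ as the product of the corresponding factors, do the same for $Df(x)|_\V$ using the oblique projection $\pi_\V^{\V,\U_w}$, and note that the isomorphism $\U_w^\perp \to \M$ factor is common to both, so it cancels in the quotient, reducing everything to the ratio of a Euclidean orthogonal-projection Jacobian and an oblique-projection Jacobian, which is precisely $1/|\bV \wedge \bU_w|$ by elementary linear algebra (the Gram determinant of $\bV, \bU_w$). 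Once \eqref{intro:duejac} is established, substituting it into the area formula from Theorem~\ref{teo:areaIntro} and pulling the constant $|\bV \wedge \bW|$ out of the integral gives \eqref{formula-f} directly, completing the proof.
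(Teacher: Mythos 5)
Your proposal is correct and its overall skeleton coincides with the paper's proof: reduce to the general area formula via the implicit function theorem and the uniform intrinsic differentiability of the implicit map (Theorem~\ref{intdiffpar}, Proposition~\ref{uidimpliescont}), identify $\T_x=\graph(d\phi_w)=\ker Df(\Phi(w))$, and then convert $J\Phi$ into $|\bV\wedge\bW|\,J_Hf/J_\V f$ through the representation $J\Phi(w)=|\bV\wedge\bW|/|\bV\wedge\bU_w|$ of Proposition~\ref{propjac}.

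The one place where you genuinely diverge is the proof of the key identity $J_Hf(x)/J_\V f(x)=1/|\bV\wedge\bU_w|$. The paper works with the row vectors $R^i_Hf(x)$ of the differential, observes that their span is orthogonal to $\U_w=\ker Df(x)$, and uses the Hodge star to write $\bU_w$ as $\ast\bigl(R^1_Hf(x)\wedge\dots\wedge R^p_Hf(x)\bigr)/|R^1_Hf(x)\wedge\dots\wedge R^p_Hf(x)|$, from which $|\bV\wedge\bU_w|=J_\V f(x)/J_Hf(x)$ drops out of the defining property of $\ast$. You instead factor $Df(x)=L\circ P$ with $P:\G\to\U_w^\perp$ the orthogonal projection and $L$ an isomorphism, so that the $L$-factor cancels in the ratio and everything reduces to the Jacobian of $P|_\V$, which equals $|\bV\wedge\bU_w|$ (wedge an orthonormal frame of $\V$ with $\bU_w$ and note the $\U_w$-components die). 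Both computations are valid and of comparable length; yours avoids the Hodge star but requires the (true, and easily justified via $\sqrt{\det(TT^*)}$) multiplicativity of the Jacobians along the factorization, which you should spell out since $J_Hf$ is defined as an operator norm on $p$-vectors. The only imprecision worth fixing is your parenthetical justification that $M=Q-P$: the clean argument is that $Df(x)|_\V$ is an h-isomorphism onto $\M$, so $\V$ has Hausdorff dimension $P$, and Hausdorff dimensions of complementary subgroups add up to $Q$.
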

The proof of \eqref{intro:duejac} can be obtained using some arguments of multilinear algebra, which are similar to those of \cite[Theorem~3.2]{CorMag22} and \cite[Theorem~5.4]{Mag12A}. 
However, in our more general setting, with intrinsic graphs of arbitrary codimension, it is rather interesting that the analogous ``algebraic arguments'' work 
using the rows of the differential $Df$ in place of the horizontal gradients of the components of $f$. We also point out that \eqref{formula-f} precisely extends \cite[Theorem 1.1]{CorMag22} from Heisenberg groups to general homogeneous groups. The proof of Theorem~\ref{teo:arealevelset} is given in Section~\ref{sect:arealevels}.

We have already noticed that Theorem~\ref{teo:areauid} applies to a class of intrinsic graphs that might be smaller than that of Theorem~\ref{teo:areaIntro}.
The $(\G,\M)$-regular sets of $\G$ in Theorem~\ref{teo:arealevelset} are uniformly intrinsically differentiable, \cite[Theorem~4.3.7]{CorniPhD},
hence they satisfy the assumptions of Theorem~\ref{teo:areauid}.
On the other hand, this class of regular sets may constitute an even smaller class than that of Theorem~\ref{teo:areauid}.
In fact, to determine whether the graph of a uniformly intrinsically differentiable
map is a $(\G,\M)$-regular set remains an intriguing open question.
The problem is related to the validity
of a suitable Whitney's extension theorem.
Indeed, for $(\G,\R^k)$-regular sets of $\G$, where this theorem is available, 
a positive answer holds, \cite[Theorem 4.1]{DiD21}.

Theorem~\ref{teo:arealevelset} can be compared with \cite[Theorem~1.1]{JNGV22}, the latter proving an area formula for the measure $\psi^d$ of \cite{JNGV22} equal to either the Hausdorff measure or the spherical measure. When $\psi^d$ is the spherical measure, one may notice that the {\em area factor} introduced in \cite{JNGV22} is proportional to the ratio between the intrinsic Jacobian $J\Phi$ in \eqref{intro:uno} and the spherical factor $\beta_d$ of Definition~\ref{def:sphfactor}. 
Another version of the area formula is proved in \cite[Theorem~1.3]{AntMer22} for a.e.\ intrinsically differentiable intrinsic-Lipschitz graphs, where the integration measure is the centered Hausdorff measure. Here the area factor is naturally replaced by the centered area factor, defined by the centered Hausdorff measure.

In Section~\ref{sect:intder}, we analyze a subclass of intrinsic regular graphs for which the spherical 
measure can be explicitly expressed in terms of suitable intrinsic partial derivatives of the parametrizing map $\phi$ (Definition \ref{intrinsicder}).
We consider maps acting between $\W$ and $\V$, where $(\W,\V)$ is a couple of complementary subgroups of $\G$ such that $\W$ and $\V$ are orthogonal, and $\V$ is a horizontal subgroup. We assume that the map is continuously intrinsically differentiable, then we combine Theorem~\ref{teo:areaIntro} 
and some results from \cite{ADDDLD24} to derive the most explicit formula
to compute the spherical measure of intrinsically regular sets. 
We can explicitly relate
the spherical measure of the intrinsic graph 
to its intrinsic partial derivatives,
according to Theorem~\ref{areaintder}.
Precisely, the partial derivatives are computed along the so-called ``projected vector fields'', see Definition~\eqref{def:projvf}. In the proof of Theorem~\ref{areaintder} the key point is to carry out a delicate computation of the intrinsic Jacobian \eqref{intro:uno} of the parametrizing map, using the matrix representation of the intrinsic differential with respect to suitable coordinates, see Theorem~\ref{teo:intJacobian}. 

The Jacobian in Theorem~\ref{areaintder}  recovers 
the previous ones in the literature for one codimensional intrinsic graphs
\cite{ASCV06, CMPSC14, DiD21, ADDDLD24}
and for low codimensional intrinsic graphs
of Heisenberg groups, \cite{Cor21,CorMag22,Vit22}.
Somehow, Theorem~\ref{areaintder} brings us back to the central point of this work, which is an effective area formula for the spherical measure with an explicit notion of Jacobian. 
It still remains rather fascinating that this area formula also holds for those sets that, from a Euclidean viewpoint, can even exhibit a fractal nature, while the classical Euclidean tools do not apply.

\section{A short compendium on homogeneous groups}\label{sect:compendium}

A \textit{graded group} $\G$ is a connected, simply connected and nilpotent Lie group, whose Lie algebra is
graded, namely there exists a sequence of subspaces $\cV_j$ with $j\in\N$, such that $\cV_j= \{ 0 \}$ if $j>\iota$, $[\cV_i, \cV_j] \subseteq \cV_{i+j}$
for every $i,j\ge1$, $\cV_\iota \neq \{ 0 \}$ and
$ \mathrm{Lie}(\G)=\cV_1 \oplus \dots \oplus \cV_\iota$, 
where 
$$[\cV_i,\cV_j]=\text{span} \{ [X,Y]  :  X \in \cV_i, \ Y \in \cV_j\}.$$ 
The positive integer $\iota$ is called the {\em step} of $\G$. 
In the special case where $[\cV_1, \cV_i] = \cV_{i+1}$ for every $i=1, \dots, \iota-1$, we call $\G$ a \textit{stratified group}.

Since $\G$ is connected, simply connected and nilpotent, the exponential map $\mathrm{exp}: \mathrm{Lie}(\G) \to \G$ is a global diffeomorphism that permits us to identify in a standard way $\G$ with $\mathrm{Lie}(\G)$.
As a consequence, we may think of a graded group $\G$ as a graded vector space $V_1 \oplus V_2 \oplus \dots \oplus V_\iota$ endowed with both a Lie group and a Lie algebra structure. 

For the sequel, it is convenient to introduce the following integers:
\[
h_0=0,\quad n_i=\dim(V_i) \quad \text{and}\quad h_s=\sum_{j=1}^{s}n_j,
\]
for every $i,s=1,\ldots,\iota$. The integer $q$ indicates the linear dimension of $\G$. 
\begin{deff}[Graded basis]
We say that the basis $(e_1,\ldots,e_q)$ of a graded Lie group $\G$ is {\em graded} if
the ordered families $ 	(e_{h_{i-1}+1},\ldots,e_{h_i}) $ constitute a basis of $V_i$ for every $i=1,\ldots,\iota$.
\end{deff}

We introduce the Lie group operation on $\G$ through the well known Baker--Campbell--Hausdorff formula, abbreviated as the {\em BCH formula}, see for instance \cite[Section~2.15]{Var84Lie}. We may state the BCH formula with respect to a fixed graded basis $(v_1, \ldots,v_q)$ of $\G$. Then for every $v,w\in\G$ the formula reads as
	\beq \label{eq:BCH}
	vw=v+w+ \sum_{s=2}^\iota \sum_{j=h_{s-1}+1}^{h_s} Q_j\big(\bar{\cp}_{s-1}(v),\bar{\cp}_{s-1}(w)\big)v_j,
	\eeq 
where we have defined the projections
\begin{equation}\label{eq: projections}
		\bar{\cp}_j:\G\to V_1\oplus V_2\oplus\cdots\oplus V_j, \quad \bar{\cp}_j(z)=\sum_{i=1}^j z_j, \quad z=\sum_{i=1}^\iota z_j, \ z_j\in V_j
	\end{equation}
and $Q_j$ are polynomial functions on $\G$. The BCH formula could be also given with a coordinate-free representation, see \cite[Lemma 2.15.3]{Var84Lie}.
We can easily introduce natural dilations on $\G$ that respect its group structure
\begin{equation}\label{dilation} \delta_t(x)=\sum_{i=1}^\iota t^i x_i \quad \mathrm{if} \quad  x= \sum_{i=1}^\iota x_i, \ x_i \in V_i \quad \text{and} \quad t>0.
\end{equation}
In the paper we always deal with this class of Lie groups, if not otherwise specified.

We can equip a graded group $\G$ with a {\em homogeneous distance}, i.e.\ a distance $d$ on $\G$ such that for every $x,y,z \in \G$ and $t>0$, it satisfies the conditions $d(zx,zy)=d(x,y)$ and $d(\delta_tx, \delta_ty)=t d(x,y)$. We also introduce the {\em homogeneous norm} $\| x \|=d(x,0)$ for every $x \in \G$.
A {\em homogeneous group} is a graded group equipped with the family of dilations $\set{\delta_r:r>0}$ and a homogeneous distance.
Dilations also define the special class of {\em homogeneous subgroups},
namely Lie subgroups which are closed with respect to dilations. 
They are also subspaces with respect to the linear structure of $\G$.

It is well known that the Hausdorff dimension of $\G$ with respect to $d$ is 
\[
Q=\sum_{k=1}^{\iota} k \ \mathrm{dim}(V_k)
\]
and it does not depend on the choice of the homogeneous distance, since they are all equivalent to each other. 
For $x \in \G$ and $r>0$, we define the closed ball 
$$
\B(x,r)= \{ y \in \G: d(x,y) \leq r \}
$$
and the open ball 
$$B(x,r)=\{y \in \G : d(x,y)<r \}.$$
We will also use the distance function with respect to a set $A \subset \G$: for $x \in \G$, we set 
\[
\mathrm{dist}(x, A)= \text{inf} \set{ d(x,y)  : y \in A }.
\]
Throughout the paper, a scalar product $ \langle \cdot, \cdot \rangle$ on $\G$ is fixed and we denote by $|\cdot|$ its associated norm. Taking into account the linear structure of $\G$, we have a canonical isomorphism between $\G$ and $T_0\G$. Therefore the scalar product $\langle\cdot,\cdot\rangle$ automatically extends to a left invariant Riemannian metric $g$ on $\G$. The norm generated by the inner product on the tangent space $T_x\G$ is denoted by $|\cdot|_g$, with $x \in \G$. 
For every $k \in \N$ we consider the space $\Lambda_k \G$ of $k$-vectors. The fixed scalar product $\langle \cdot, \cdot \rangle$ naturally extends to a scalar product on $\Lambda_k \G$. We have then defined a Hilbert space structure on $\Lambda_k\G$, where the norm is still denoted by $| \cdot |$.

\section{Intrinsic regularity in homogeneous groups}\label{sect:NotFactHom}

In this section, we present some definitions and results related to factorizations of homogeneous groups, intrinsic graphs, intrinsic Lipschitz mappings, intrinsically linear maps and intrinsic differentiability in homogeneous groups.
For this topic, we refer for instance to \cite{FMS14,FranchiSerapioni2016IntrLip,SerraCassano2016,CorniPhD}. Although in these works the theories are presented in stratified groups, it is not difficult to notice that the results we use do not rely on the Lie bracket generating condition.

In the sequel $\G$ is assumed to be a homogeneous group. 
Two homogeneous subgroups $\W$ and $\V$ are said to be \textit{complementary subgroups of $\G$} if
\begin{equation}\label{eq:compl}
\W \cap \V = \{0 \}\quad \text{and} \quad \G= \W   \V.
\end{equation}
Such conditions can be rephrased as follows: for every $ x \in \G$ there exists a unique couple 
\[ 
(w,v) \in \W\times \V\quad \text{such that}\quad   x= w   v.
\]
As a consequence, the ``group projections'' 
\beq\label{eq:proj}
\pi_{\W}: \G \to \W, \ \pi_{\W}(wv)=w, \ \pi_{\V}: \G \to \V, \ \pi_{\V}(wv)=v
\eeq
are well defined for every $w\in\W$ and $v\in\V$.
Notice that the order in the choice of $\W$ and $\V$ matters. If we consider the reverse order $\G=\V\W$, then the noncommutative group operation yields two different group projections. 
For this reason we use the notation $(\W,\V)$ to denote the (ordered) {\em couple of complementary subgroups} $\W$ and $\V$ such that the group projections are defined in \eqref{eq:proj}.
To emphasize the dependence on the order of the factorization, 
we also write
$\pi^{\W,\V}_\W=\pi_\W$ and $\pi^{\W,\V}_\V=\pi_\V$. 

Throughout the section, a couple $(\W,\V)$ of complementary subgroups of $\G$ is understood, if not otherwise stated.

\begin{deff}\label{d:projWVM}
Let $\W$, $\elle$, $\V$ be homogeneous subgroups of $\G$. If $(\W,\V)$ and $(\elle,\V)$ are couples of complementary subgroups, then we define the following restrictions of the group projections
\[
\pi_{\W,\elle}^{\W,\V}=\pi^{\W,\V}_\W|_{\elle}:\elle \to \W\quad\text{and}\quad 
\pi_{\elle,\W}^{\elle,\V}=\pi^{\elle,\V}_\elle|_{\W}:\W\to \elle.
\]
\end{deff}
From the uniqueness of factorizations, we can verify that both the restrictions $\pi_{\W,\elle}^{\W,\V}$ and $\pi_{\elle, \W}^{\elle,\V}$ are invertible and we have the formula
\begin{equation}\label{eq:invproj}
\pi_{\W,\elle}^{\W,\V}=(\pi_{\elle,\W}^{\elle,\V})^{-1}.
\end{equation}
One can easily realize that  
\begin{equation}\label{defczero}
c_0=c_0(\W,\V)=\inf \{ \| wv\| : w \in \W, \ v \in \V, \ \|w \|+ \|v \|=1 \}>0.
\end{equation}
By the homogeneity of the distance and the triangle inequality, we have
\begin{equation}\label{eqczero}
c_0 ( \| w \|+ \|v \|) \leq \|wv\| \leq \|w \|+ \|v \|
\end{equation}
for all $w \in \W$, $v \in \V$.
As a direct consequence of \eqref{eqczero}, for all $x\in\G$ we also observe that
\beq\label{dis:projest}
c_0 \| \pi_{\V}(x)  \| \leq \mathrm{dist}(x,\W) \leq \| \pi_{\V}(x)  \|.
\eeq

\begin{deff}[Intrinsic graph]
For $A \subset\W$, we define the \textit{intrinsic graph} of $\phi: A \to \mathbb{V}$ as the set
$$
 \mathrm{graph}(\phi)=\{w\phi(w):  w \in A \}.
$$
The \emph{graph map} $\Phi: A \to \G$ of $\phi$ is defined by $ \Phi(w):= w \phi(w)$ for all $w \in A$.
\end{deff}

Translating intrinsic graphs requires some preliminary notions. 

\begin{deff}[Translations with respect to a factorization]\label{def-translation}
For each $x \in \G$ we define the map $\sigma_x:\W \to \W$ as
\beq\label{df:sigma_x}
\sigma_x(\eta)=\pi_{\W}(x \eta)
\eeq
for every $\eta\in\W$. For $A \subset \W$ and $\phi:A  \to \V$, we consider $A_x= \sigma_x(A)$ to be the 
{\em translated set by $x$} and introduce the map $\phi_{x}: A_x \to\V$ as
\begin{equation}\label{eq:translatedf}
\phi_x(\eta)=(\pi_\V(x^{-1} \eta))^{-1}\phi(\pi_\W(x^{-1} \eta))=(\pi_\V(x^{-1} \eta))^{-1}\phi(\sigma_{x^{-1}}(\eta))
\end{equation}
for every $\eta\in A_x$, that is the {\em translation of $\phi$ by $x$}.
\end{deff}

It is not difficult to realize that $\sigma_x:\W\to\W$ is invertible and 
\beq
(\sigma_x)^{-1}=\sigma_{x^{-1}}
\eeq
for every $x\in\G$. Indeed, for every $w \in \W$ we have
\begin{equation*}
\sigma_{x^{-1}}(\sigma_x(w))=\pi_{\W}(x^{-1}\pi_{\W}(xw))=\pi_{\W}(x^{-1}xw(\pi_{\V}(xw))^{-1})=\pi_{\W}(w(\pi_{\V}(xw))^{-1})=w.
\end{equation*}
For every $x\in\G$, $A\subset W$ and $\phi:A\to\V$, the equalities
\beq\label{eq:translgraph}
 x(\mathrm{graph}(\phi))= \mathrm{graph}(\phi_x)= \{ \eta \phi_x(\eta) : \eta \in A_x \}
\eeq 
are a direct consequence of Definition~\ref{def-translation}, where $A_x$ denotes the translated set by $x$.

\subsection{Intrinsic Lipschitz maps}

The present section collects some basic facts on intrinsic Lipschitz maps in a homogeneous group $\G$.
Notice that there are different equivalent definitions of intrinsic Lipschitz functions, as discussed below.
We recall that a couple of complementary subgroups $(\W,\V)$ of $\G$ is fixed also in this section.

\begin{deff}\label{intlip}
For some $L\ge 0$, we say that $\phi:A \to \V$, $A \subset \W$, is \textit{intrinsic $L$-Lipschitz} if
for every $w ,w' \in A$ we have
$$
 \| \pi_{\V}( \Phi(w')^{-1} \Phi(w) ) \| \leq L \| \pi_{\W}( \Phi(w')^{-1} \Phi(w) ) \|.
$$
We denote by $\mathrm{Lip}(\phi)\ge0$ the infimum among the positive constants $L$ such that $\phi$ is intrinsic $L$-Lipschitz.
We say that $\phi$ is \textit{intrinsic Lipschitz} if there exists $L>0$ such that $\phi$ is intrinsic $L$-Lipschitz.
\end{deff}

It is easy to notice that \cite[Proposition 3.3]{FranchiSerapioni2016IntrLip} also holds in homogeneous groups, as stated in the next proposition.

\begin{prop}\label{charintlip}
For $L\ge0$ and $A \subset \W$ and $\phi:A \to \V$, $\phi$ is intrinsic $L$-Lipschitz
if and only if $ \| \phi_{x^{-1}}(w) \| \leq L \| w \| $ for every $x \in \mathrm{graph}(\phi)$ and $w \in A_{x^{-1}}$.
\end{prop}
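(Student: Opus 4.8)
The plan is to unravel the two sides of the inequality in Definition~\ref{intlip} by translating the base point to the identity, using the translated map $\phi_{x^{-1}}$ from Definition~\ref{def-translation}. First I would fix $x \in \graph(\phi)$, say $x = \Phi(w_0) = w_0\phi(w_0)$ with $w_0 \in A$, and take an arbitrary second point $w \in A$, setting $y = \Phi(w)$. The key computation is to identify the left-translated point $x^{-1}y$ with the graph point of $\phi_{x^{-1}}$ over the appropriate base: by \eqref{eq:translgraph}, $x^{-1}(\graph(\phi)) = \graph(\phi_{x^{-1}})$, and the point $x^{-1}y$ lies in this translated graph, so it can be written uniquely as $\eta\,\phi_{x^{-1}}(\eta)$ with $\eta = \pi_\W(x^{-1}y) \in A_{x^{-1}}$. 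Hence $\pi_\W(x^{-1}y) = \eta$ and $\pi_\V(x^{-1}y) = \phi_{x^{-1}}(\eta)$.

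Next I would observe that the quantities appearing in Definition~\ref{intlip}, namely $\pi_\V(\Phi(w_0)^{-1}\Phi(w))$ and $\pi_\W(\Phi(w_0)^{-1}\Phi(w))$, are exactly $\pi_\V(x^{-1}y)$ and $\pi_\W(x^{-1}y)$, since $x = \Phi(w_0)$ and $y = \Phi(w)$. Therefore the defining inequality for $\phi$ being intrinsic $L$-Lipschitz at the pair $(w_0, w)$ reads $\|\phi_{x^{-1}}(\eta)\| \le L\|\eta\|$, where $\eta$ ranges over $\pi_\W(x^{-1}\Phi(w))$ as $w$ ranges over $A$. One then needs to check that as $w$ varies over all of $A$ and $x$ over all of $\graph(\phi)$, the element $\eta = \pi_\W(x^{-1}\Phi(w))$ ranges precisely over all of $A_{x^{-1}} = \sigma_{x^{-1}}(A)$; this is immediate from the definition $\sigma_{x^{-1}}(w) = \pi_\W(x^{-1}w)$ together with the fact that $x^{-1}\Phi(w) = x^{-1}w\phi(w)$ and $\pi_\W(x^{-1}w\phi(w)) = \pi_\W(x^{-1}w)$ — this last equality because right-multiplication by the $\V$-component $\phi(w)$ does not affect the $\W$-projection. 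For the converse direction, I would simply run the same equivalences backwards: given the bound $\|\phi_{x^{-1}}(w)\| \le L\|w\|$ for all $x \in \graph(\phi)$ and $w \in A_{x^{-1}}$, specializing $x = \Phi(w')$ and using the identifications above recovers $\|\pi_\V(\Phi(w')^{-1}\Phi(w))\| \le L\|\pi_\W(\Phi(w')^{-1}\Phi(w))\|$ for every $w, w' \in A$.

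The main obstacle — really the only subtle point — is the bookkeeping with the projections: verifying that $\pi_\W(x^{-1}\Phi(w)) = \sigma_{x^{-1}}(\pi_\W(\Phi(w))) $ appropriately, i.e.\ that the base point $\eta$ obtained from $x^{-1}y$ is genuinely the $\sigma_{x^{-1}}$-translate of $w$, so that the quantifier ``for every $w \in A$'' on one side matches ``for every $\eta \in A_{x^{-1}}$'' on the other. This follows from the invertibility of $\sigma_x$ with $(\sigma_x)^{-1} = \sigma_{x^{-1}}$ established just before the proposition, but it must be spelled out carefully since the group is noncommutative and the order of the factorization matters. Everything else is a direct substitution using \eqref{eq:translatedf}, \eqref{eq:translgraph}, and the uniqueness of the factorization $\G = \W\V$.
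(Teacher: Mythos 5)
Your argument is correct: the identification of $x^{-1}\Phi(w)$ with the graph point $\eta\,\phi_{x^{-1}}(\eta)$ over $\eta=\sigma_{x^{-1}}(w)$, together with the uniqueness of the factorization $\G=\W\V$ and the bijectivity of $\sigma_{x^{-1}}:A\to A_{x^{-1}}$, turns the inequality of Definition~\ref{intlip} into the stated bound and back, and your key observation that $\pi_\W(x^{-1}w\phi(w))=\pi_\W(x^{-1}w)$ (right multiplication by an element of $\V$ leaves the $\W$-projection unchanged) is exactly what makes the quantifiers match. The paper gives no proof of this proposition, simply citing \cite[Proposition~3.3]{FranchiSerapioni2016IntrLip}; your write-up supplies the standard argument behind that reference, so there is nothing to compare beyond noting that you have filled in the omitted details correctly.
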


If $\W\subset\G$ is a normal subgroup and $A\subset\W$, 
then the intrinsic $L$-Lipschitz continuity of $\phi:A\to\V$ can be simply rephrased as
\begin{equation}\label{intlipWnormal}
		\| \phi(w')^{-1} \phi(w) \| \leq L \| \phi(w')^{-1} w'^{-1} w \phi(w') \|
\end{equation}
	for every $w,w' \in A$.
\subsubsection{Intrinsic Lipschitz maps by cones} 
It is well known that intrinsic Lipschitz maps can be equivalently defined by cones constructed by the homogeneous distance. Actually one can introduce a more general notion of 
intrinsic Lipschitz $\H$-graph, that only refers to a subset of $\G$ and to a homogeneous subgroup $\H\subset\G$,  for instance \cite[Definition~11]{FranchiSerapioni2016IntrLip}.  

To introduce this definition we first define cones in homogeneous groups.
Let $\H\subset \G$ be any homogeneous subgroup, let $x \in \G$ and $\alpha \in (0,1)$. 
The \textit{cone} with \textit{axis} $\H$, \textit{vertex} $x$ and \textit{opening} $\alpha$ is $X(x,\H, \alpha)=x  X(0,\H, \alpha)$ where 
\[
	X(0,\H, \alpha)= \set{y \in \G  : \text{dist}(y, \H) \leq \alpha \| y \|}.
\]
We say that $S\subset \G$ is an {\em intrinsic Lipschitz $\H$-graph} if 
\[
S\cap X(p,\H,\alpha)={p}\quad \text{for every $p\in S$.}
\]
Taking into account \cite[Proposition 3.1]{FranchiSerapioni2016IntrLip} and \cite[Proposition~3.3]{FranchiSerapioni2016IntrLip}, one can verify that $\phi:A\to\V$ is intrinsic Lipschitz if and only if
its intrinsic graph is an intrinsic Lipschitz $\V$-graph, namely, there exists $\alpha \in (0,1)$ such that
\begin{equation}\label{eq:graphLip}
\mathrm{graph}(\phi) \cap X(p, \V, \alpha)= \{p\} \ \ \ \ \mathrm{for \ every \ } x \in \mathrm{graph}(\phi).
\end{equation}

\subsection{Intrinsic differentiability}\label{sect:intrdiff}
A couple of complementary subgroups $(\W,\V)$ of the homogeneous group $\G$ is fixed also in this section. We first introduce some basic facts about intrinsically linear mappings.

We say that the function $L: \W \to\V$ is \textit{intrinsically linear} if $\text{graph(}L\text{)}$ is a homogeneous subgroup of $\G$. It is straightforward to notice that any intrinsically linear function $L: \W \to \V $ is also homogeneous i.e.\ $L(\delta_t(w))=\delta_t(L(w))$ for every $w \in \W$, $t>0$.
We denote by $\IL(\W,\V)$ the family of intrinsically linear maps from $\W$ to $\V$.

For $L, T \in \IL\pa{\W,\V}$, we define the distance
\begin{equation}\label{eq:distwv}
\ccS_{\W,\V}(L,T)= \sup \{ d(L(w),T(w)) : w \in \W, \ \| w \|=1 \}.
\end{equation}
We assume throughout that $\IL(\W,\V)$ is topologized by this distance.

The following proposition corresponds to \cite[Proposition 3.1.5]{FMS14}.

\begin{prop}\label{P7}
The following statements hold.
\begin{itemize}
\item[(i)] If $L: \W \to \V$ is intrinsically linear, then $(\mathrm{graph}(L),\V)$ represents a couple of complementary subgroups of $\G$.
\item[(ii)] If $(\mathbb{H},\V)$ is a couple of complementary subgroups of $\G$, then there exists a unique intrinsically linear function $L: \W \to \V$ such that $\mathbb{H}= \mathrm{graph}(L)$.
\end{itemize}
\end{prop}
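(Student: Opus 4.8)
\textbf{Proof proposal for Proposition~\ref{P7}.}

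\emph{Approach.} The two statements are really two sides of the same fact: a homogeneous subgroup complementary to $\V$ is precisely the graph of an intrinsically linear map, and intrinsically linear maps are exactly those whose graph is such a subgroup. The plan is to prove (i) by verifying directly the two defining conditions \eqref{eq:compl} for the couple $(\mathrm{graph}(L),\V)$, exploiting that $\mathrm{graph}(L)$ is by hypothesis a homogeneous subgroup and that $\W\cap\V=\{0\}$; and to prove (ii) by \emph{defining} the candidate map through the group projection $\pi_\W^{\W,\V}$ restricted to $\H$, i.e.\ essentially $L=\pi_\V^{\H,\V}\circ(\pi_\W^{\W,\V}|_\H)^{-1}$ in the spirit of Definition~\ref{d:projWVM}, and then checking intrinsic linearity, i.e.\ that $\mathrm{graph}(L)=\H$ is a homogeneous subgroup.

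\emph{Step 1 (proof of (i)).} Assume $L:\W\to\V$ is intrinsically linear, so $\mathrm{graph}(L)=\{wL(w):w\in\W\}$ is a homogeneous subgroup of $\G$. First, $\mathrm{graph}(L)\cap\V=\{0\}$: if $wL(w)=v\in\V$ then, since $w\in\W$ and $L(w),v\in\V$ and factorizations through $(\W,\V)$ are unique, we get $w=0$ and $L(0)=v$; but $L$ is homogeneous, so $L(0)=L(\delta_t 0)=\delta_t L(0)$ for all $t>0$, forcing $L(0)=0$, hence $v=0$. Second, $\G=\mathrm{graph}(L)\cdot\V$: given $x\in\G$, write $x=wv$ with $(w,v)\in\W\times\V$ using the splitting $(\W,\V)$; then $x=(wL(w))\big(L(w)^{-1}v\big)$, and the first factor lies in $\mathrm{graph}(L)$ while the second lies in $\V$. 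Thus \eqref{eq:compl} holds for $(\mathrm{graph}(L),\V)$.

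\emph{Step 2 (proof of (ii)).} Suppose $(\H,\V)$ is a couple of complementary subgroups. By Definition~\ref{d:projWVM} with $\elle=\H$, the restriction $\pi^{\H,\V}_{\H,\W}=\pi^{\H,\V}_\H|_\W:\W\to\H$ is invertible, and I set
\[
L:=\pi^{\H,\V}_\V\big|_\H\circ\big(\pi^{\H,\V}_{\H,\W}\big)^{-1}:\W\to\V.
\]
For $w\in\W$, writing $w=h v$ with $h\in\H$, $v\in\V$ (the $(\H,\V)$-splitting of $w$), one has $\pi^{\H,\V}_{\H,\W}(w)=h$ and $L(w)=\pi^{\H,\V}_\V(w)=v$, so $w=hv=h\,L(w)$, i.e.\ $h=w L(w)^{-1}$ and therefore $w L(w)=h\in\H$; conversely every $h\in\H$ arises this way from $w=\pi^{\H,\V}_{\H,\W}{}^{-1}(h)$. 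Hence $\mathrm{graph}(L)=\H$, which is a homogeneous subgroup, so $L$ is intrinsically linear. Uniqueness: if $L':\W\to\V$ also satisfies $\mathrm{graph}(L')=\H$, then for each $w\in\W$ both $wL(w)$ and $wL'(w)$ lie in $\H$, and their ``quotient'' $L(w)^{-1}w^{-1}wL'(w)=L(w)^{-1}L'(w)$ lies in $\H\cap\V=\{0\}$ (since $wL(w),wL'(w)\in\H$ and $\H$ is a subgroup, while $L(w)^{-1}L'(w)\in\V$), giving $L(w)=L'(w)$.

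\emph{Main obstacle.} The only genuinely delicate point is making sure that the algebraic manipulations in Step 1 and Step 2 are legitimate \emph{as identities in the noncommutative group} $\G$ — in particular that $L(w)^{-1}v\in\V$ and $L(w)^{-1}L'(w)\in\V$ really do land in $\V$ (immediate since $\V$ is a subgroup), and that the graph set $\{wL(w)\}$ coincides with the image of $\H$ under the inverse projection rather than merely being in bijection with $\W$. These follow cleanly from uniqueness of factorizations and the fact \eqref{eq:invproj}, but one must be careful to invoke the homogeneity of $L$ (to get $L(0)=0$) at exactly the right moment in (i); this is where the hypothesis ``intrinsically linear'' is used beyond ``the graph is a subgroup''. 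Everything else is bookkeeping with the projections $\pi^{\W,\V}_\W,\pi^{\W,\V}_\V,\pi^{\H,\V}_\H,\pi^{\H,\V}_\V$.
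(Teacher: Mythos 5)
The paper does not actually prove this proposition itself — it imports it verbatim from \cite[Proposition 3.1.5]{FMS14} — so I am judging your argument on its own terms. Part (i) is correct: the intersection argument via uniqueness of the $(\W,\V)$-factorization together with $L(0)=0$ (obtained from homogeneity as you do, or even more directly from $0\in\mathrm{graph}(L)$ and $\W\cap\V=\{0\}$), and the decomposition $x=(wL(w))\,(L(w)^{-1}v)$, are exactly what is needed. The uniqueness argument in (ii) is also correct.

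The existence part of (ii), however, contains a genuine error: your map $L$ is off by a group inverse. If $w=hv$ is the $(\H,\V)$-splitting of $w\in\W$, then $h=wv^{-1}$, so the element of $\H$ lying over $w$ is $wv^{-1}$, and the map whose intrinsic graph is $\H$ is $L(w)=v^{-1}=\big(\pi_\V^{\H,\V}(w)\big)^{-1}$, not $L(w)=v$. Your own chain of equalities exposes the problem: from $w=h\,L(w)$ you correctly deduce $h=w\,L(w)^{-1}$, but you then conclude ``therefore $wL(w)=h\in\H$'', which contradicts the previous line unless $L(w)$ equals its own inverse; with your $L$ one gets $wL(w)=hv^{2}$, which is not in $\H$ in general. (Relatedly, the displayed composition $\pi_\V^{\H,\V}|_\H\circ(\pi_{\H,\W}^{\H,\V})^{-1}$ does not typecheck, since $(\pi_{\H,\W}^{\H,\V})^{-1}$ maps $\H\to\W$ while $\pi_\V^{\H,\V}|_\H$ expects an input in $\H$.) The repair is one line — define $L(w)=(\pi_\V^{\H,\V}(w))^{-1}$ and rerun your verification, which then shows $wL(w)=h$ and, conversely, that every $h\in\H$ arises this way — but as written the step establishing $\mathrm{graph}(L)=\H$ fails.
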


The following proposition corresponds to \cite[Proposition 3.1.6]{FMS14}
\begin{prop}
\label{intlinintlip}
If $L: \W \to \V$ is an intrinsically linear map, then $L$ is intrinsic Lipschitz.
\end{prop}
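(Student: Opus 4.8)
The plan is to reduce the statement to the cone characterization of intrinsic Lipschitz maps recorded in \eqref{eq:graphLip}: it suffices to produce some $\alpha\in(0,1)$ such that $\graph(L)\cap X(p,\V,\alpha)=\{p\}$ for every $p\in\graph(L)$. Write $\H=\graph(L)$, which is a homogeneous subgroup by hypothesis. By Proposition~\ref{P7}(i) the couple $(\H,\V)$ is complementary, and inverting factorizations ($\G=\H\V$ implies $\G=\V\H$) shows that $(\V,\H)$ is complementary as well. Hence the constant $c_0=c_0(\V,\H)>0$ of \eqref{defczero} is available, and the estimate \eqref{dis:projest} applied to the couple $(\V,\H)$ (with $\V$ in the role of the first factor) gives $c_0\,\|\pi_\H^{\V,\H}(x)\|\le \dist(x,\V)$ for every $x\in\G$.

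Next I would fix $\alpha\in(0,1)$ with $\alpha<c_0$. Given $p\in\H$ and $y\in\H\cap X(p,\V,\alpha)$, set $z=p^{-1}y$. Since $\H$ is a subgroup, $z\in\H$, so $\pi_\H^{\V,\H}(z)=z$ and therefore $\dist(z,\V)\ge c_0\|z\|$. On the other hand, left invariance of $d$ yields $X(p,\V,\alpha)=p\,X(0,\V,\alpha)$, so $z\in X(0,\V,\alpha)$, i.e.\ $\dist(z,\V)\le\alpha\|z\|$. Combining the two inequalities gives $c_0\|z\|\le\alpha\|z\|$, which forces $\|z\|=0$ by the choice of $\alpha$; thus $z=0$ and $y=p$. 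This proves $\graph(L)\cap X(p,\V,\alpha)=\{p\}$ for every $p\in\graph(L)$, and \eqref{eq:graphLip} then gives that $L$ is intrinsic Lipschitz.

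The only point requiring care is the passage from the complementary couple $(\H,\V)$ to $(\V,\H)$, which is exactly what is needed so that \eqref{dis:projest} furnishes a lower bound for $\dist(\cdot,\V)$ precisely on the subgroup $\H=\graph(L)$; everything else is a direct consequence of the homogeneity and left invariance of $d$ together with \eqref{eqczero}. As an alternative route one could argue straight from Definition~\ref{intlip}: for $w,w'\in\W$ one writes $\Phi(w')^{-1}\Phi(w)=hv$ with $h\in\H$, $v\in\V$, and must bound $\|\pi_\V(hv)\|=\|v\|$ by a multiple of $\|\pi_\W(hv)\|$; applying \eqref{eqczero} to the couples $(\V,\H)$ and $(\W,\V)$ reduces this to comparing $\|v\|$ with $\|h\|$ through the complementary structure, but the cone argument is shorter and cleaner.
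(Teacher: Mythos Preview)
Your argument is correct. The paper itself does not prove this proposition but only cites \cite[Proposition~3.1.6]{FMS14}, so there is nothing to compare against directly; your cone argument via \eqref{eq:graphLip}, Proposition~\ref{P7}(i), and \eqref{dis:projest} applied to the reversed couple $(\V,\H)$ is a clean self-contained proof that fits entirely within the tools already set up in Section~\ref{sect:NotFactHom}.
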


\begin{deff}[Intrinsic differentiability]\label{d:intrinsicDiff}
Let $A \subset \W$ be an open set, let $\phi: A  \to \V$ and $\bar{w} \in A$. Defining $x=\bar{w} \phi(\bar{w})$, we say that $\phi$ is \textit{intrinsically differentiable} at $\bar{w}$ if there exists an intrinsically linear map $L: \W \to \V$ such that 
\begin{equation}\label{intdiff}
\|L(w)^{-1} \phi_{x^{-1}}(w)\| =o(\| w \|)
\end{equation} 
as $ \|w \| \to 0$ and  $w \in A_{x^{-1}}$.
If the function $L$ exists, it is unique and we call it the {\em intrinsic differential} of $\phi$ at $\bar{w}$, denoted by $d \phi_{\bar{w}}$.
\end{deff}

Taking into account the standard relationship between the differential of a mapping and the tangent space to its graph, 
following \cite[Definition~3.2.6]{FMS14} we introduce the notion of tangent subgroup.

\begin{deff}[Tangent subgroup]\label{TangenteP}
Let $A \subset \W$ be an open set and let $\phi:A  \to \V$ be a function. Let us fix $\bar{w} \in A$ and consider a point $\bar{x}= \bar{w} \phi(\bar{w}) \in \text{graph(}\phi \text{)}$. 
We say that a homogeneous subgroup $\T$ of $\G$ is the \textit{tangent subgroup} to $\mathrm{graph}(\phi)$ at $\bar{x}$, if for all $\varepsilon >0$ there exists $\delta>0$ such that we have
\begin{equation}\label{condizionetangenzaP}
		\text{graph(}\phi_{{\bar{x}}^{-1}} \text{)} \cap \{ x \in \G  : \|\pi_{\W}(x) \| < \delta \}   \subset X(0,\mathbb{T}, \varepsilon).
	\end{equation}
\end{deff}

The next theorem, proved in \cite[Theorem~3.2.8]{FMS14}, connects the tangent subgroup with the intrinsic differential. 

\begin{teo}\label{characterizationP}
Let $A \subset \W$ be an open set and let $\phi:A  \to \V$. We consider $\bar{w} \in A$ and the point $\bar{x}= \Phi(\bar{w}) \in \mathrm{graph}(\phi)$. Then the following conditions are equivalent.
\begin{itemize}
		\item[(i)] $\phi$ is intrinsically differentiable at $\bar{w} \in A$.
		\item[(ii)] There exists a set $\T$ such that
		\begin{itemize}
			\item[($ii_1$)] $\T$ is a homogeneous subgroup;
			\item[($ii_2$)] $(\T,\V)$ is a couple of complementary subgroups of $\G$;
			\item[($ii_3$)] $ \T$ is the tangent subgroup to $\mathrm{graph}(\phi)$ at $\bar{x}$.
		\end{itemize}
\end{itemize}
	Moreover, if either $(i)$ or $(ii)$ holds, then the intrinsic differential $d\phi_{\bar{w}}:\W \to \V$ is the unique intrinsically linear function such that $\mathrm{graph}(d\phi_{\bar{w}})=\T$.
\end{teo}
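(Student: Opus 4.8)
The statement to prove is Theorem~\ref{characterizationP}, which asserts the equivalence of intrinsic differentiability of $\phi$ at $\bar w$ with the existence of a homogeneous subgroup $\T$ that is complementary to $\V$ and is the tangent subgroup to $\graph(\phi)$ at $\bar x = \Phi(\bar w)$, together with the identification $\graph(d\phi_{\bar w}) = \T$.

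\textbf{Approach.} The plan is to prove $(i)\Rightarrow(ii)$ by taking $\T := \graph(d\phi_{\bar w})$ and then to prove $(ii)\Rightarrow(i)$ by reading the cone-containment condition backwards. The bridge in both directions is the definition of intrinsic differentiability \eqref{intdiff}, namely $\|L(w)^{-1}\phi_{x^{-1}}(w)\| = o(\|w\|)$, and the geometric reformulation of containment in a cone $X(0,\T,\varepsilon)$ via the distance estimate $\mathrm{dist}(y,\T)\le\varepsilon\|y\|$. The key algebraic fact I would use repeatedly is \eqref{eq:translgraph}: $x^{-1}\graph(\phi) = \graph(\phi_{x^{-1}})$, so that the blow-up of $\graph(\phi)$ at $\bar x$ is literally the graph of the translated map $\phi_{x^{-1}}$ sitting over a neighborhood of $0$ in $\W$.

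\textbf{Direction $(i)\Rightarrow(ii)$.} Assume $\phi$ is intrinsically differentiable at $\bar w$ with intrinsic differential $L = d\phi_{\bar w}$. Set $\T := \graph(L)$. Then ($ii_1$) holds because $L$ is intrinsically linear, so $\graph(L)$ is a homogeneous subgroup by definition; and ($ii_2$) holds because $(\graph(L),\V)$ is a couple of complementary subgroups by Proposition~\ref{P7}(i). For ($ii_3$) I must verify the cone condition \eqref{condizionetangenzaP}. Take a point $y \in \graph(\phi_{\bar x^{-1}})$ with $\|\pi_\W(y)\| < \delta$; write $y = w\,\phi_{\bar x^{-1}}(w)$ with $w = \pi_\W(y) \in A_{\bar x^{-1}}$. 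I want to show $y \in X(0,\T,\varepsilon)$, i.e. $\dist(y,\T)\le\varepsilon\|y\|$. Since $w L(w) \in \graph(L) = \T$, we bound $\dist(y,\T) \le d(y, wL(w)) = \|(wL(w))^{-1}\,w\,\phi_{\bar x^{-1}}(w)\| = \|L(w)^{-1}w^{-1}w\,\phi_{\bar x^{-1}}(w)\| = \|L(w)^{-1}\phi_{\bar x^{-1}}(w)\|$, which is $o(\|w\|)$ by \eqref{intdiff}. Meanwhile $\|y\| = \|w\,\phi_{\bar x^{-1}}(w)\| \ge c_0(\|w\|+\|\phi_{\bar x^{-1}}(w)\|)\ge c_0\|w\|$ by \eqref{eqczero}, and also (since $\phi_{\bar x^{-1}}(0)=0$ and the $o$-estimate gives $\|\phi_{\bar x^{-1}}(w)\|\le\|L(w)\|+o(\|w\|)=O(\|w\|)$) we get $\|y\|\le C\|w\|$. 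Hence $\dist(y,\T)\le o(\|w\|) \le o(\|y\|)$, so for $\|w\|$ (equivalently $\|y\|$) small enough, $\dist(y,\T)\le\varepsilon\|y\|$, i.e. $y\in X(0,\T,\varepsilon)$. Choosing $\delta$ small enough gives \eqref{condizionetangenzaP}.

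\textbf{Direction $(ii)\Rightarrow(i)$ and uniqueness.} Assume $\T$ satisfies ($ii_1$)--($ii_3$). By ($ii_1$) and ($ii_2$), $(\T,\V)$ is a complementary couple, so by Proposition~\ref{P7}(ii) there is a unique intrinsically linear $L:\W\to\V$ with $\graph(L)=\T$. I claim $L$ is an intrinsic differential of $\phi$ at $\bar w$. Fix $\varepsilon>0$ and take the $\delta>0$ from \eqref{condizionetangenzaP}. For $w\in A_{\bar x^{-1}}$ with $\|w\|<\delta$, the point $y=w\,\phi_{\bar x^{-1}}(w)$ satisfies $\pi_\W(y)=w$, so $y\in X(0,\T,\varepsilon)$, i.e. $\dist(y,\T)\le\varepsilon\|y\|$. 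Using the splitting $\G=\T\V$ and the estimate $\dist(y,\T)\ge c_0\|\pi_\V^{\T,\V}(y)\|$ analogous to \eqref{dis:projest} (applied to the couple $(\T,\V)$), I control $\|\pi_\V^{\T,\V}(y)\|$; expanding $y = \pi_\T^{\T,\V}(y)\,\pi_\V^{\T,\V}(y)$ and noting that $\pi_\T^{\T,\V}(y) = wL(w)$ because the $\W$-component of $y$ is $w$ and $\T$ is a graph over $\W$, I get $\|L(w)^{-1}\phi_{\bar x^{-1}}(w)\| = \|\pi_\V^{\T,\V}(y)\| \le c_0^{-1}\dist(y,\T)\le c_0^{-1}\varepsilon\|y\|$; combined with $\|y\|\le C\|w\|$ (which in turn follows from the same cone estimate, controlling $\|\phi_{\bar x^{-1}}(w)\|$ in terms of $\|w\|$), this yields $\|L(w)^{-1}\phi_{\bar x^{-1}}(w)\|\le C'\varepsilon\|w\|$. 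As $\varepsilon$ is arbitrary and $\delta=\delta(\varepsilon)$, this is exactly $o(\|w\|)$, establishing \eqref{intdiff}. Uniqueness of $L$ (hence the final ``Moreover'' clause) follows from the uniqueness built into Definition~\ref{d:intrinsicDiff} together with the uniqueness in Proposition~\ref{P7}(ii).

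\textbf{Main obstacle.} The delicate point is the passage, in the $(ii)\Rightarrow(i)$ direction, from the single cone-containment inequality $\dist(y,\T)\le\varepsilon\|y\|$ to the genuine little-$o$ estimate needed for intrinsic differentiability: one must simultaneously (a) identify $\pi_\T^{\T,\V}(y)$ with $wL(w)$ using that $\T$ is the graph of $L$ over $\W$ and that $\pi_\W^{\W,\V}$ of $y$ equals $w$, and (b) bootstrap the bound $\|y\| \le C\|w\|$ out of the cone condition itself (a priori $\|\phi_{\bar x^{-1}}(w)\|$ is not yet known to be $O(\|w\|)$). Handling (b) carefully — perhaps by first deriving a crude linear bound on $\|\phi_{\bar x^{-1}}(w)\|$ from the cone condition at a fixed opening, then feeding it back in — is the part requiring genuine care, though all the needed tools are the splitting inequalities \eqref{eqczero}, \eqref{dis:projest} and Proposition~\ref{P7}.
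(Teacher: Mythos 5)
Your proposal is correct. Note that the paper does not prove this theorem itself: it is quoted from \cite[Theorem~3.2.8]{FMS14}, so there is no in-paper argument to compare against; your reconstruction follows the standard route of that reference. Both directions are sound: in $(i)\Rightarrow(ii)$ the estimate $\dist(y,\T)\le\|L(w)^{-1}\phi_{\bar x^{-1}}(w)\|=o(\|w\|)$ together with $\|w\|\le c_0^{-1}\|y\|$ from \eqref{eqczero} gives the cone condition; in $(ii)\Rightarrow(i)$ the identification $\pi_{\T}^{\T,\V}(y)=wL(w)$ is justified exactly as you say (write $y=w'L(w')v$ with $L(w')v\in\V$ and invoke uniqueness of the $(\W,\V)$ factorization to get $w'=w$), and the point you flag as delicate --- that $\|\phi_{\bar x^{-1}}(w)\|=O(\|w\|)$ is not known a priori --- is genuine but is resolved by the bootstrap you describe: fixing one small opening $\varepsilon_0$ with $c_0(\T,\V)^{-1}\varepsilon_0\le 1/2$ and using the homogeneity bound $\|L(w)\|\le C_L\|w\|$ yields a uniform linear bound on $\|y\|$, which can then be fed back into the cone estimate for arbitrary $\varepsilon$.
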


A stronger notion of pointwise intrinsic differentiability is given in the next definition. 

\begin{deff}[Uniform intrinsic differentiability]\label{uniformintdiff}
Let $A \subset \W$ be an open set and let $\phi: A  \to \V$ be a function. The map $\phi$ is {\em uniformly intrinsically differentiable} at a point $\bar{w} \in A$ if there exists an intrinsically linear map $L: \W \to \V$ such that
\begin{equation}\label{equid}
\lim_{r \to 0^+}\sup_{ \|\bar{w}^{-1} w' \| < r} \sup_{0< \| w \|< r}\frac{ \| L(w)^{-1}\phi_{\Phi(w')^{-1}}(w) \|}{\| w \|} =0,
\end{equation}
where $\Phi$ denotes the graph map of $\phi$, $w' \in A$ and $w \in A_{\Phi(w')^{-1}}$.
The map $\phi$ is said uniformly intrinsically differentiable on $A$ if it is uniformly intrinsically differentiable at $w$ for every $w \in A$.
\end{deff}

A version of the previous definition first appeared in \cite[Definition 3.16]{AS2009}. We have referred to the more recent version in \cite[Definition 3.3]{DiD21}.

Next, we show that the uniform intrinsic differentiability implies the continuity of the intrinsic differential. For a homogeneous subgroup $\W\subset \G$, $x \in \W$ and $r>0$, we set 
\[
B_{\W}(x,r)=B(x,r) \cap \W =\{ y \in \W : d(y,x)<r\}\quad\text{and}\quad B_{\W}^{*}(x,r)=B_{\W}(x,r)\setminus \{ x \}.
\]
The following proposition extends \cite[Proposition 3.7(iii)]{DiD21}.
\begin{prop}\label{uidimpliescont}
	Let $(\W,\V)$ be a couple of complementary subgroups of $\G$. Let $A \subset \W$ be an open set and let $\phi: A  \to \V$ be uniformly intrinsically differentiable on $A$. Then the intrinsic differential $d \phi: A \to \IL(\W,\V), \  w \to d\phi_w$ is continuous on $A$.
\end{prop}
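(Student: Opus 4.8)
The plan is to fix $\bar w\in A$ and show that $d\phi_w\to d\phi_{\bar w}$ in the metric $\ccS_{\W,\V}$ as $w\to\bar w$. Write $L_w=d\phi_w$ for brevity. The basic mechanism is a triangle-inequality estimate that compares $L_w(u)$ and $L_{\bar w}(u)$ for a unit vector $u\in\W$ by inserting the translated map $\phi_{\Phi(w')^{-1}}$ evaluated at a suitable small increment, and then invoking \eqref{equid} twice --- once at $\bar w$ and once at $w$ --- together with the Lipschitz bound for intrinsically linear maps (Proposition~\ref{intlinintlip}) and the quantitative splitting estimates \eqref{eqczero}. Concretely, given $\varepsilon>0$, choose $r>0$ so small that the supremum in \eqref{equid} at $\bar w$ is below $\varepsilon$; then for $w$ with $\|\bar w^{-1}w\|$ small enough the analogous supremum at $w$ is also below $\varepsilon$ (using that the $r$-parameter set for $\bar w$ contains a neighbourhood of $w$ once $w$ is close enough to $\bar w$, and monotonicity of the $\sup$ in $r$). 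For a fixed scale $\rho\in(0,r)$ and a unit $u$, apply \eqref{equid} at $\bar w$ with $w'=\bar w$ and increment $\delta_\rho u$ to get $\|L_{\bar w}(\delta_\rho u)^{-1}\phi_{\Phi(\bar w)^{-1}}(\delta_\rho u)\|\le\varepsilon\rho$, and apply it at $\bar w$ again with $w'=w$ and the same increment (legitimate since $\|\bar w^{-1}w\|<r$ and $0<\|\delta_\rho u\|=\rho<r$) to get $\|L_{\bar w}(\delta_\rho u)^{-1}\phi_{\Phi(w)^{-1}}(\delta_\rho u)\|\le\varepsilon\rho$; likewise use \eqref{equid} at $w$ with $w'=w$ to control $\|L_w(\delta_\rho u)^{-1}\phi_{\Phi(w)^{-1}}(\delta_\rho u)\|$.

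Next I would combine these three bounds. Using the group inequalities \eqref{eqczero} applied in $\G$ to the product $L_{\bar w}(\delta_\rho u)^{-1}\phi_{\Phi(w)^{-1}}(\delta_\rho u)$ and $\phi_{\Phi(w)^{-1}}(\delta_\rho u)^{-1}L_w(\delta_\rho u)$ --- recall $L_{\bar w}(\delta_\rho u)\in\V$ and $\phi_{\Phi(w)^{-1}}(\delta_\rho u)\in\V$ so differences of these lie in $\V$, while we need to pass to $\W$-$\V$ splitting carefully --- one deduces
\begin{equation*}
\| L_{\bar w}(\delta_\rho u)^{-1} L_w(\delta_\rho u) \| \le C\varepsilon\rho
\end{equation*}
for a constant $C$ depending only on $c_0(\W,\V)$. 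By homogeneity of intrinsically linear maps, $L_{\bar w}(\delta_\rho u)=\delta_\rho L_{\bar w}(u)$ and similarly for $L_w$, and the left-invariant homogeneous norm scales by $\rho$, so dividing by $\rho$ gives $\|L_{\bar w}(u)^{-1}L_w(u)\|\le C\varepsilon$ uniformly over unit $u\in\W$; that is exactly $\ccS_{\W,\V}(L_w,L_{\bar w})\le C\varepsilon$ by the definition \eqref{eq:distwv}. Since $\varepsilon$ was arbitrary, $d\phi_w\to d\phi_{\bar w}$, proving continuity at $\bar w$, and $\bar w\in A$ was arbitrary.

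The main obstacle I anticipate is the bookkeeping in passing from the three ``vertical'' estimates (which are naturally of the form $\|L(\cdot)^{-1}\phi_{\cdot}(\cdot)\|\le\varepsilon\rho$, living in $\V$) to a single estimate on $\|L_{\bar w}(\delta_\rho u)^{-1}L_w(\delta_\rho u)\|$: one must check that $\phi_{\Phi(w)^{-1}}(\delta_\rho u)$ is a legitimate common ``pivot'' --- i.e.\ that $\delta_\rho u$ lies in the domain $A_{\Phi(w)^{-1}}$ of the translated map for $\rho$ small, which follows because $\sigma_{\Phi(w)^{-1}}$ fixes $0$ and is a homeomorphism of $\W$ with $A$ open, so a whole ball $B_\W(0,\rho)$ is contained in $A_{\Phi(w)^{-1}}$ once $w$ is close to $\bar w$ and $\rho$ is small. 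A second, more delicate point is making the ``$w'=w$ inside the $\bar w$-supremum'' step rigorous: this needs $\|\bar w^{-1}w\|<r$, where $r$ was chosen for the $\varepsilon$-smallness of the $\bar w$-supremum; so the order of quantifiers must be: first fix $r=r(\varepsilon,\bar w)$, then restrict $w$ to $B_\W(\bar w,r)$ (shrinking further if needed so that the $w$-supremum at scale $r$ is also $<\varepsilon$, which is where one genuinely uses uniform intrinsic differentiability \emph{at $w$} and not just at $\bar w$). Once these two domain/quantifier issues are handled, the remaining computation is the routine triangle-inequality chain sketched above.
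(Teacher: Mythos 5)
Your plan is correct and is essentially the paper's own argument: both compare $d\phi_{\bar w}$ and $d\phi_w$ on a dilated increment by inserting the pivot $\phi_{\Phi(w)^{-1}}(\delta_\rho u)$, control the first term by the uniform intrinsic differentiability at $\bar w$ (with $w'=w$ ranging near $\bar w$), the second by the pointwise differentiability at $w$ at a $w$-dependent scale, and then rescale by homogeneity; the paper merely packages this as a contradiction argument with sequences. The only cosmetic difference is that the combination step needs nothing more than the triangle inequality for the homogeneous norm inside $\V$, so the constant from \eqref{eqczero} is not actually required.
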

\begin{proof}
	Let $w_0 \in A$. We want to show that 
	$$\lim_{w\to w_0} \ccS_{\W,\V}(d\phi_{w_0}, d\phi_{w}) =0.$$
	By contradiction, we assume that there exists $\varepsilon>0$ such that for every 
	$n \in \N\sm\set{0}$, there exists $u_n\in B_\W(w_0,1/n)$ and 
	$$
	\sup_{\|w\| =1} \|(d\phi_{w_0}(w))^{-1} d \phi_{u_n}(w) \| >\varepsilon.
	$$
	Then we can find $w_n \in \W$ such that $\|w_n\|=1$ and 
	\begin{equation}\label{condizione}
		\|(d\phi_{w_0}(w_n))^{-1} d \phi_{u_n}(w_n) \| > \varepsilon.
	\end{equation}
	By the homogeneity of the intrinsic differential and \eqref{condizione}, selecting $0<t <\frac{1}{n}$, we have
	\begin{equation}
		\|\delta_{\frac{1}{t}}(d\phi_{w_0}(\delta_t(w_n))^{-1} d \phi_{u_n}(\delta_t(w_n)) \| =\|\delta_{\frac{1}{t\|w_n\|}}(d\phi_{w_0}(\delta_t(w_n))^{-1} d \phi_{u_n}(\delta_t(w_n)) \|> \varepsilon.
	\end{equation}
	From the uniform intrinsic differentiability at $w_0$ we get $n_\ep\in\N\sm\set{0}$ such that 
	\beq
	\sup_{w' \in B_{\W}\big(w_0, \frac{1}{n}\big)} \sup_{w \in B_{\W}^{\star}\big(0,\frac{1}{n}\big)} \frac{\|(d\phi_{w_0}(w)^{-1} \phi_{\Phi(w')^{-1}}(w))\|}{\| w\|}<\frac\ep4
	\eeq
	for every $n\ge n_\ep$. By the triangle inequality, for $0<t<1/n_\ep$, we get 
	\begin{align*}
		\ep <&\|\delta_{\frac{1}{t\|w_{n_{\ep}}\|}}(d\phi_{w_0}(\delta_t(w_{n_{\ep}}))^{-1} \phi_{\Phi(u_{n_{\ep}})^{-1}}(\delta_t(w_{n_{\ep}}))) \delta_{\frac{1}{t\|w_{n_{\ep}}\|}}( \big(\phi_{\Phi(u_{n_{\ep}})^{-1}}(\delta_t(w_{n_{\ep}}))\big)^{-1} d \phi_{u_{n_{\ep}}}(\delta_t(w_{n_{\ep}})) \|\\
		\leq & \frac{\|(d\phi_{w_0}(\delta_t(w_{n_{\ep}}))^{-1} \phi_{\Phi(u_{n_{\ep}})^{-1}}(\delta_t(w_{n_{\ep}})))\|}{\| \delta_t(w_{n_{\ep}})\|} + \frac{\|   d \phi_{u_{n_{\ep}}}(\delta_t(w_{n_{\ep}}))^{-1}\phi_{\Phi(u_{n_{\ep}})^{-1}}(\delta_t(w_{n_{\ep}})) \|}{\| \delta_t(w_{n_{\ep}}) \|}\\
		\leq & \sup_{w' \in B_{\W}\big(w_0, \frac{1}{n_\ep}\big)} \frac{\|(d\phi_{w_0}(\delta_t(w_{n_{\ep}}))^{-1} \phi_{\Phi(w')^{-1}}(\delta_t(w_{n_{\ep}})))\|}{\| \delta_t(w_{n_{\ep}})\|} + \frac{\|   d \phi_{u_{n_{\ep}}}(\delta_t(w_{n_{\ep}}))^{-1}\phi_{\Phi(u_{n_{\ep}})^{-1}}(\delta_t(w_{n_{\ep}})) \|}{\| \delta_t(w_{n_{\ep}}) \|}
		\\
		\leq & \sup_{w' \in B_{\W}\big(w_0, \frac{1}{n_\ep}\big)} \sup_{w \in B_{\W}^{\star}\big(0,\frac{1}{n_\ep}\big)} \frac{\|(d\phi_{w_0}(w)^{-1} \phi_{\Phi(w')^{-1}}(w))\|}{\| w\|} + \frac\ep4<\frac\ep2.
	\end{align*}
	Due to the intrinsic differentiability of $\phi$ at $u_{n_\ep}$, in the previous inequalities the second addend is less than $\ep/4$, up to taking $0<t<\min{t_\ep,1/n_\ep}$ for some suitable $t_\ep>0$, hence reaching a contradiction.
\end{proof}

\section{Spherical measure and Federer density}\label{sect:FedDens}

In the present section, we introduce some measure-theoretic tools that will be used to compute the spherical 
measure of intrinsic graphs. Let $\mathcal{F} \subset \mathcal{P}(\G) $ be a nonempty family of closed subsets of a homogeneous group $\G$ equipped with a homogeneous distance $d$. 

Let $\zeta: \mathcal{F} \to [0,+\infty]$ and for $\delta>0$, $A \subset \G$ define
\begin{equation}\label{caratheodory}
	\phi_{\delta, \zeta}(A)= \inf \set{ \sum_{j=0}^{\infty} \zeta(B_j) \ : \ A \subset \bigcup_{j=0}^\infty B_j , \ \mathrm{diam}(B_j) \leq \delta, \ B_j \in \mathcal{F} }.
\end{equation}
Taking $\phi_\zeta(A)=\sup_{\delta>0}\phi_{\delta,\zeta}(A)$, we have obtained a Borel regular measure $\phi_\zeta$ over the metric space $\G$. 
We introduce the gauge
\[
\zeta_\alpha(S)=(\diam(S)/2)^{\alpha}
\]
for every $S\subset \G$. 
If $\mathcal{F}$ coincides with the family $\cF_b$ of closed balls with positive radius and we 
consider $\zeta=\zeta_\alpha|_{\cF_b}$, then we call the resulting measure $\phi_{\zeta_\alpha}$ the \textit{$\alpha$-dimensional spherical measure} and denote it by $\cS^\alpha$.

We also consider the case where $\mathcal{F}$ is the family $\cF_c$ of closed sets, $\G$ is equipped with a scalar product $\lan \cdot,\cdot\ran$ and its associated norm $|\cdot|$. Then we fix $k \in \{1, \dots, q \}$ and define the geometric constant
$$
\omega_k=\cL^k(\set{x\in\R^k: |x|_{\R^k}\le 1}),
$$
where $|\cdot|_{\R^k}$ is the Euclidean norm of $\R^k$. 
Considering $\zeta=\zeta_k|_{\cF_c}$, the associated measure
$\omega_k\phi_{\zeta_k}$ is the \textit{$k$-dimensional Hausdorff measure}, 
that we denote by $\mathcal{H}^k_{|\cdot|}$. 
Moreover, let $n \in \N$ and consider $\mathcal{F}_E$ the family of closed sets of $\R^n$ equipped with the Euclidean norm $| \cdot |_{\R^n}$. If we fix $k \in \{1, \dots, n \}$ and we consider $\zeta=\zeta_k|_{\cF_E}$, the associated measure
$\omega_k\phi_{\zeta}$ is the \textit{$k$-dimensional Euclidean Hausdorff measure} and we denote it by $\mathcal{H}^k_{E}$.

\begin{deff}[Spherical Federer density]\label{def:sphefederer}
Let $\alpha>0$, $x \in \G$ and let $\mu$ be a Borel regular measure over $\G$. We define the \emph{spherical Federer $\alpha$-density of $\mu$ at $x\in\G$} by
the formula
\[
\cs^{\alpha}(\mu,x)= \inf_{\varepsilon>0} \sup \left\{ \frac{\mu(\mathbb{B})}{r(\mathbb{B})^{\alpha}} : x \in \mathbb{B} \in \mathcal{F}_b, \ \mathrm{diam}(\mathbb{B}) < \varepsilon \right\}\in[0,+\infty],
\]
where $r(\B)$ denotes the radius of the metric ball.
\end{deff}

The spherical Federer density was first introduced in \cite{Mag30} to establish the corresponding {\em measure-theoretic area formula}, see \cite{Mag30}, and \cite{LecMag22} for more details.

\begin{The}[{\cite[Theorem~5.7]{LecMag22}}]\label{thm:metarea}
Let $\mu$ be an outer measure over a diametrically regular metric space $X$ and let $\alpha>0$. 
We choose a Borel set $A\subset X$ and assume the validity of the following conditions.
	\begin{enumerate}
		\item 
		$\mu$ is both a regular measure and a Borel measure.
		\item
		$(\cF_b)_{\mu,\zeta_{\alpha}}$ covers $A$ finely.
		\item
		$A$ has a countable covering whose elements are open and have $\mu$-finite measure.
		\item
		The subset $\lbrace x\in A: \cs^\alpha(\mu,x)=0\rbrace$ is
		$\sigma$-finite with respect to $\cS^\alpha$.
		\item
		We have the absolute continuity $\mu\res A<<\cS^\alpha\res A$.
	\end{enumerate}
	Then $\cs^\alpha(\mu,\cdot):A\to[0,+\infty]$ is Borel and for every Borel set $B\subset A$ we have 
	\begin{equation}\label{eq:metarea}
		\mu(B)=\int_B\cs^\alpha(\mu,x)\,d\cS^\alpha(x).
	\end{equation}
\end{The}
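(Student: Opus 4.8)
The plan is to treat Theorem~\ref{thm:metarea} as a Federer-type differentiation result and to reduce the identity \eqref{eq:metarea} to the two opposite inequalities between $\mu(B)$ and $\int_B\cs^\alpha(\mu,x)\,d\cS^\alpha(x)$, for every Borel $B\subset A$, after checking that $x\mapsto\cs^\alpha(\mu,x)$ is Borel. The core is a pair of comparison lemmas valid for any Borel set $E\subset A$: \textbf{(a)} if $\cs^\alpha(\mu,x)\le t$ for all $x\in E$, then $\mu(E)\le t\,\cS^\alpha(E)$; \textbf{(b)} if $\cs^\alpha(\mu,x)\ge t$ for all $x\in E$, then $\mu(E)\ge t\,\cS^\alpha(E)$. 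Granted these, fix $\lambda>1$ and split $A$ into the Borel layers $E_k=\{x\in A:\lambda^k\le\cs^\alpha(\mu,x)<\lambda^{k+1}\}$, $k\in\Z$, together with $A_0=\{\cs^\alpha(\mu,\cdot)=0\}$ and $A_\infty=\{\cs^\alpha(\mu,\cdot)=+\infty\}$. On $A_\infty$, applying (b) inside each $\mu$-finite open set of the covering (3) and letting $t\to+\infty$ forces $\cS^\alpha(A_\infty)=0$, whence $\mu(A_\infty)=0$ by the absolute continuity (5); on $A_0$, which is $\sigma$-finite with respect to $\cS^\alpha$ by (4), applying (a) with $t\to0^+$ along an exhaustion by $\cS^\alpha$-finite pieces gives $\mu(A_0)=0$. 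On each $B\cap E_k$, lemmas (a)--(b) sandwich both $\mu(B\cap E_k)$ and $\int_{B\cap E_k}\cs^\alpha(\mu,\cdot)\,d\cS^\alpha$ between $\lambda^k\cS^\alpha(B\cap E_k)$ and $\lambda^{k+1}\cS^\alpha(B\cap E_k)$ (using that $\cS^\alpha$ is $\sigma$-finite on these pieces, which follows from (3) via (b)); summing over $k\in\Z$ and letting $\lambda\to1^+$ yields \eqref{eq:metarea}.

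For lemma (a) I would argue directly from the Carath\'eodory construction of $\cS^\alpha$. Write $E=\bigcup_n E_n$ with $E_n=\{x\in E:\sup\{\mu(\B)/r(\B)^\alpha:x\in\B\in\cF_b,\ \diam\B<1/n\}<t+\eta\}$, an increasing sequence whose union is $E$ by the very definition of the Federer density. Any cover of $E_n$ by balls $\B_j\in\cF_b$ with $\diam\B_j<\min(\delta,1/n)$ may be thinned to those that meet $E_n$, and each such ball then satisfies $\mu(\B_j)\le(t+\eta)r(\B_j)^\alpha$, where $r(\B_j)^\alpha=(\diam\B_j/2)^\alpha$; hence $\mu(E_n)\le\sum_j\mu(\B_j)\le(t+\eta)\sum_j(\diam\B_j/2)^\alpha$, and passing to the infimum over covers and then $\delta\to0^+$ gives $\mu(E_n)\le(t+\eta)\cS^\alpha(E_n)\le(t+\eta)\cS^\alpha(E)$, so that $n\to\infty$ and $\eta\to0^+$ prove (a). For lemma (b), fix $s<t$; since $\cs^\alpha(\mu,x)>s$ for $x\in E$, hypothesis (2) allows one to realize the density through the family $(\cF_b)_{\mu,\zeta_\alpha}$, so that for every $x\in E$ and every $\varepsilon>0$ there is $\B\in(\cF_b)_{\mu,\zeta_\alpha}$ with $x\in\B$, $\diam\B<\varepsilon$ and $\mu(\B)>s\,r(\B)^\alpha$; intersecting with a fixed $\mu$-finite open set $U$ from (3) and keeping the balls inside $U$, this is a fine cover of $E\cap U$ to which a Vitali-type covering theorem for $\cS^\alpha$ applies, producing a countable disjoint subfamily $\{\B_j\}$ with $\cS^\alpha\big((E\cap U)\sm\bigcup_j\B_j\big)=0$. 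Then $s\,\cS^\alpha(E\cap U)\le s\sum_j r(\B_j)^\alpha<\sum_j\mu(\B_j)=\mu\big(\bigcup_j\B_j\big)\le\mu(U)$, and summing over the countable cover from (3) and letting $s\to t^-$ gives $\mu(E)\ge t\,\cS^\alpha(E)$.

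Finally, to see that $\cs^\alpha(\mu,\cdot)$ is Borel one writes it as $\inf_{k\in\N}g_k$ with $g_k(x)=\sup\{\mu(\B)/r(\B)^\alpha:x\in\B\in\cF_b,\ \diam\B<1/k\}$ and checks that each $g_k$ is lower semicontinuous: if $x_n\to x$ and $\B\ni x$ is an admissible ball whose ratio is close to $g_k(x)$, then (slightly enlarging $\B$ if necessary, keeping the diameter bound) $x_n\in\B$ for large $n$, so $\liminf_n g_k(x_n)\ge g_k(x)$. I expect the genuine obstacle to be lemma (b): in a general homogeneous group there is no Besicovitch covering theorem, so one must lean on a Vitali-type covering theorem for the spherical measure $\cS^\alpha$ --- precisely the differentiation tool that hypothesis (2) is designed to make available --- and carry it out \emph{efficiently}, i.e.\ with the sharp constant $1$ in $s\,\cS^\alpha(E\cap U)\le\sum_j\mu(\B_j)$, since any extraneous dimensional factor would destroy the equality in \eqref{eq:metarea}. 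The remaining, more routine, difficulty is the bookkeeping ensuring $\sigma$-finiteness of $\cS^\alpha$ on the layers $E_k$, so that the telescoping sum over $k$ converges.
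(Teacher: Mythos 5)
First, a remark on the comparison itself: the paper offers no proof of this statement — it is imported verbatim from \cite[Theorem~5.7]{LecMag21}, whose proof runs along Federer's density estimates (Federer 2.10.17(2) for one inequality and 2.10.18(1) for the other). Your global architecture — the two comparison lemmas (a) and (b), the dyadic layers $E_k$, the separate treatment of $\{\cs^\alpha(\mu,\cdot)=0\}$ via hypothesis (4) and of $\{\cs^\alpha(\mu,\cdot)=+\infty\}$ via (3) and (5), and the limit $\lambda\to 1^+$ — coincides with that proof. Your argument for (a) is correct and is exactly Federer 2.10.17(2): since the ball in the Federer density need only \emph{contain} $x$ rather than be centered at $x$, every ball of a Carath\'eodory cover that meets $E_n$ satisfies $\mu(\B_j)\le (t+\eta)\,\zeta_\alpha(\B_j)$, which yields the comparison with the sharp constant $1$. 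The lower semicontinuity of the truncated suprema $g_k$ and the layer bookkeeping are also fine.

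The genuine gap is in your proof of (b). You reduce it to ``a Vitali-type covering theorem for $\cS^\alpha$'' with constant $1$, extracted from the fine cover by balls satisfying $\mu(\B)>s\,r(\B)^\alpha$. No such theorem is available in this generality: the Besicovitch covering property fails for many homogeneous distances, and the Vitali property of $\cS^\alpha$ with respect to closed balls is precisely what one cannot assume (the paper itself recalls in the introduction that the corresponding differentiation theorem for the spherical measure admits a counterexample). Hypothesis (2) is only a fineness condition on a subfamily of balls; it does not manufacture a Vitali theorem. The actual mechanism (Federer 2.10.18(1)) avoids it entirely: inside a $\mu$-finite open set $V$ one extracts from the fine cover, by the greedy $5r$-type algorithm, a countable \emph{disjoint} subfamily $\{T_j\}$ such that for every $N$ one has $E\cap V\subset\bigcup_{j\le N}T_j\cup\bigcup_{j>N}\widehat T_j$, where $\widehat T_j$ is the concentric ball of radius $5\,r(T_j)$. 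Disjointness and $\mu(V)<\infty$ give $\sum_j\zeta_\alpha(T_j)\le\sum_j\mu(T_j)/t\le\mu(V)/t<\infty$, so the enlargement factor $5^\alpha$ multiplies only a tail that tends to $0$; letting $N\to\infty$ in the resulting estimate for $\phi_{\delta,\zeta_\alpha}(E\cap V)$ (in the notation of \eqref{caratheodory}) yields $t\,\cS^\alpha(E\cap V)\le\mu(V)$ with the sharp constant $1$, and outer regularity of $\mu$ finishes (b). This ``enlarge only the summable tail'' idea — which is exactly where hypothesis (3) enters — is the missing step; without it, your lemma (b), and hence the equality in \eqref{eq:metarea}, is not established.
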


We introduce some terminology needed to apply Theorem~\ref{thm:metarea}.
A {\em diametrically regular metric space} $(X,d)$ has the property that for each $x\in X$ there exist $R_x,\delta_x>0$ such that the function $r\to \diam(B(y,r))$ is continuous on $(0,\delta_x)$ for every $y\in B(x,R_x)$.
All homogeneous groups are obviously diametrically regular, since $\diam(\B(x,r))=2r$ for every $r>0$ and $x\in\G$.

We say that a class of subsets $\cS\subset \cP(X)$ covers $A\subset X$ {\em finely} if the following condition holds.
For every $x\in A$ and $\ep>0$ we have some $S\in\cS$ such that $x\in S$ and $\diam(S)<\ep$.
From  \cite[Definition~3]{LecMag22}, it is easy to notice that $(\cF_b)_{\mu,\zeta_{\alpha}}=\cF_b$, hence this class of closed sets obviously covers finely every subset of $\G$.

\section{Differentiability and implicit function theorem}\label{secf:difimpl}

In this section we recall some known facts about differentiability, implicit function theorem and the related notion of $(\G,\M)$-regular set. In Theorem~\ref{corintdiffparam} we provide a stronger version of the implicit function theorem \cite[Theorem~1.4]{Mag14}.
Another version of this theorem appears in \cite[Lemma~2.10]{JNGV22}, whose proof relies on the strict Pansu differentiability of continuously 
Pansu differentiable mappings, which in turn follows directly from the mean value estimate \cite[Theorem~1.2]{Mag14}.

The symbols $\G$ and $\M$ denote two homogeneous groups
equipped with the gradings 
$$
\G=V_1 \oplus V_2 \oplus \dots \oplus V_\iota \quad \mathrm{  and  } \quad \M= W_1 \oplus W_2 \oplus \dots \oplus W_\upsilon.
$$ 
The linear dimensions of $\G$ and $\M$ are $q$ and $p$, respectively. 
We assume that $\G$ is equipped with a homogeneous distance $d$ and $\M$ with a homogeneous distance $\rho$. 
The Hausdorff dimensions of $\G$ and $\M$ with respect to their homogeneous distance are denoted by
$Q$ and $P$, respectively.
Two scalar products on $\G$ and $\M$ are also understood. 
The homogeneous norm on $\G$ induced by $d$ is denoted by $\|\cdot\|$.

A Lie group homomorphism $L:\G \to \M$ is an {\em h-homomorphism} if for any $x \in \G$ and $t>0$ we have $L(\delta_t(x))=\delta_t(L(x))$, where we have denoted by  the same symbol $\delta_t$ both the dilations of $\G$ and $\M$. 
The family of all h-homomorphisms from $\G$ to $\M$ is denoted by $\mathcal{L}_h(\G, \M)$. 
For $L,T \in \mathcal{L}_h(\G, \M)$, we define the distance
\begin{equation}\label{eq:nu}
\nu(L,T)= \sup \{ \rho(L(y),T(y)) : y \in \G, \ \| y \|\leq 1 \}.
\end{equation}
We assume that $\mathcal{L}_h(\G, \M)$ is equipped with the topology induced by this distance.

\begin{deff}[Differential with respect to the group structure]
Let $\Omega\subset\G$ be an open set and let $x\in\Omega$. We say that $f : \Omega \to \M$ is {\em h-differentiable} at $x \in \Omega$, or simply {\em differentiable} at $x$, if there exists an h-homomorphism $L:\G \to \M$ such that
$$
\rho(L(x^{-1} y),f(x)^{-1} f(y))= o ( d(x,y)) \quad \text{as} \quad  y\to x,
$$
then it is unique and it is called the {\em h-differential of $f$ at $x$}, or the {\em Pansu differential}, or simply the {\em differential of $f$ at $x$}. We denote this Lie group homomorphism by $Df(x)$. 
\end{deff}

We say that $f:\Omega\to\M$ is {\em continuously differentiable in $\Omega$},
if $Df: \Omega \to \mathcal{L}_h(\G, \M)$ is continuous. 
The family of all these mappings is denoted by
$C^1_h(\Omega, \M)$.

\begin{deff}[Jacobian]\label{def:jacf}
	Let $\Omega \subset \G$ be an open set and let $f \in C^1_h(\Omega, \M)$. Let $x \in \Omega$ and let $Df(x)$ be the differential of $f$ at $x$. Since $Df(x): \G \to \M$ is also linear, we can consider the following canonical extension of $Df(x)$ to $p$-vectors of $\G$
	$$ 
	\Lambda_pDf(x): \Lambda_p\G \to \Lambda_p\M, \ \Lambda_pDf(x)(v_1 \wedge \dots \wedge v_p)=Df(x)(v_1) \wedge \dots \wedge Df(x)(v_p),
	$$
	for $v_1,\ldots,v_p\in\G$.
	The fixed scalar products on $\G$ and $\M$ naturally provide scalar products on $\Lambda_p\G$ and $\Lambda_p\M$, respectively. The {\em Jacobian} of $L$ is defined as the operator norm $\|\Lambda_pDf(x)\|$ of $\Lambda_pDf(x)$ and it is denoted by $J_Hf(x)$. 
	In a similar way, if $\V \subset \G$ is a homogeneous subgroup we set $J_{\V}f(x)= \|\Lambda_p(Df(x)|_{\V}) \|$.
\end{deff}

\begin{deff}[{\cite[Definition 2.5]{Mag14}}]
We say that an h-homomorphism $L: \G \to \M$ is an \textit{h-epimorphism} 
if it has a right inverse that is also an h-homomorphism. 
\end{deff}

The next proposition extends \cite[Proposition 7.10]{Mag14} with an additional characterization of h-epimorphisms. 

\begin{prop}\label{char-hepi}
	Let $L: \G \to \M$ be an h-homomorphism and let $\K$ be its kernel. Then the following conditions are equivalent:
	\begin{itemize}
		\item[(i)] $L$ is an h-epimorphism,
		\item[(ii)]  $L$ is surjective and there exists a homogeneous subgroup $\V \subset \G$ which is complementary to $\K$,
		\item[(iii)] there exists a homogeneous subgroup $\V \subset \G$ such that $L|_{\V}:\V \to \M$ is an h-isomorphism.
	\end{itemize} 
\end{prop}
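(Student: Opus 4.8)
The plan is to prove the equivalence $(i)\Leftrightarrow(ii)\Leftrightarrow(iii)$ in a cycle, leaning on the already-available \cite[Proposition 7.10]{Mag14}, which presumably gives $(i)\Leftrightarrow(iii)$ or $(i)\Leftrightarrow$ ``$L$ surjective with a homogeneous right inverse image'', so that the genuinely new content is the insertion of condition $(ii)$. First I would record the trivial implications. For $(iii)\Rightarrow(i)$: if $L|_\V:\V\to\M$ is an h-isomorphism, then $(L|_\V)^{-1}:\M\to\V\hookrightarrow\G$ is an h-homomorphism and a right inverse of $L$, so $L$ is an h-epimorphism. For $(i)\Rightarrow(ii)$: let $R:\M\to\G$ be an h-homomorphism with $L\circ R=\Id_\M$; then $L$ is surjective, $R$ is injective, and $\V:=R(\M)$ is a homogeneous subgroup of $\G$ (image of a homogeneous subgroup under an h-homomorphism, using that $R$ commutes with dilations and is a Lie group homomorphism). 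It remains to check $\V\cap\K=\{0\}$ and $\V\K=\G$, where $\K=\ker L$.

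Second, I would prove $(ii)\Rightarrow(iii)$, which I expect to be the main obstacle. Assume $L$ is surjective and $\V$ is a homogeneous subgroup complementary to $\K=\ker L$. Consider the restriction $L|_\V:\V\to\M$. It is clearly an h-homomorphism (a restriction of one). It is injective: if $v\in\V$ and $L(v)=0$ then $v\in\V\cap\K=\{0\}$. For surjectivity of $L|_\V$: given $m\in\M$, pick by surjectivity of $L$ some $x\in\G$ with $L(x)=m$; factor $x=w k$ with $w\in\V$, $k\in\K$ using the complementarity $\G=\V\K$; then $L(x)=L(w)L(k)=L(w)$, so $L(w)=m$ with $w\in\V$. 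Hence $L|_\V$ is a bijective h-homomorphism. The subtle point is that a bijective h-homomorphism of homogeneous groups automatically has an h-homomorphism inverse; this follows because a bijective Lie group homomorphism between connected, simply connected nilpotent Lie groups is a Lie group isomorphism, and the inverse commutes with dilations since $L|_\V$ does (apply $(L|_\V)^{-1}$ to $L|_\V(\delta_t v)=\delta_t L|_\V(v)$). I would state this as a short lemma or simply cite the relevant fact about h-isomorphisms; the paper's terminology ``h-isomorphism'' should already encode ``h-homomorphism with h-homomorphism inverse''.

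Finally I would close the loop. Having $(iii)\Rightarrow(i)\Rightarrow(ii)\Rightarrow(iii)$ gives the full equivalence. To keep the argument short and aligned with the paper, I would actually phrase it as: ``$(i)\Leftrightarrow(iii)$ is \cite[Proposition 7.10]{Mag14}; we show $(i)\Rightarrow(ii)\Rightarrow(iii)$,'' so only those two implications need detail, the second being the substantive one via the factorization argument above. The one technical care point is verifying that $\V=R(\M)$ in $(i)\Rightarrow(ii)$ is genuinely complementary to $\K$: injectivity of $R$ together with $L\circ R=\Id$ gives $\V\cap\K=\{0\}$ (if $R(m)\in\K$ then $m=L(R(m))=0$), and for $\G=\V\K$ one writes, for arbitrary $x\in\G$, $x=R(L(x))\cdot\big(R(L(x))^{-1}x\big)$ and checks $R(L(x))^{-1}x\in\K$ by applying $L$; since $R(L(x))\in\V$ this exhibits the factorization. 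That is the whole proof; no heavy computation is needed, only the standard structure theory of simply connected nilpotent Lie groups and the dilation-equivariance bookkeeping.
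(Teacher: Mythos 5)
Your proof is correct, and the substantive new implication is handled exactly as in the paper: for $(iii)\Rightarrow(i)$ both you and the authors observe that $(L|_{\V})^{-1}$ is an h-homomorphism right inverse of $L$. Where you diverge is in the remaining implications: the paper simply quotes \cite[Proposition~7.10]{Mag14}, which already establishes the equivalence $(i)\Leftrightarrow(ii)$ and the implication $(i)\Rightarrow(iii)$, so the entire written proof consists of the one-line argument for $(iii)\Rightarrow(i)$. You instead reprove $(i)\Rightarrow(ii)$ (via $\V=R(\M)$, with the factorization $x=R(L(x))\cdot\bigl(R(L(x))^{-1}x\bigr)$ and the check that the second factor lies in $\K$) and $(ii)\Rightarrow(iii)$ (injectivity from $\V\cap\K=\{0\}$, surjectivity from $\G=\V\K$, and the standard fact that a bijective h-homomorphism between connected, simply connected nilpotent groups is an h-isomorphism). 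Your guess that the cited proposition gives $(i)\Leftrightarrow(iii)$ is slightly off --- it gives $(i)\Leftrightarrow(ii)$ --- but this is harmless, since your self-contained arguments close the cycle $(i)\Rightarrow(ii)\Rightarrow(iii)\Rightarrow(i)$ without needing the citation at all. The trade-off is the usual one: the paper's proof is shorter and defers to prior work, while yours is longer but independent of \cite{Mag14} and makes the complementarity of $R(\M)$ and $\K$ explicit.
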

\begin{proof}
	The equivalence of (i) and (ii) has been proved in \cite[Proposition 7.10]{Mag14}, which ensures also that (i) implies (iii).
	Let us assume that (iii) is true. It is immediate to notice that the inverse mapping $T=(L|_{\V})^{-1}$ is a right inverse of $L$ and therefore (i) is verified.
\end{proof}

\begin{teo}[{{\cite[Theorem 1.4]{Mag14}}}]
	\label{IFT}
	Let $\G$ and $\M$ be stratified groups, let $\Omega \subset \G$ be open and consider $f \in C^1_h(\Omega, \M)$. Let $\bar{x} \in \Sigma=\{ x \in \Omega : f(x)=0 \}$ such that $Df(\bar{x}):\G\to\M$ is an h-epimorphism. Let $\W= \ker Df(\bar{x})$ and assume that $\V\subset\G$ is a homogeneous subgroup complementary to $\W$.
	Then there exist two open sets $\Omega' \subset \G, \ A \subset \W$ and a unique map $\phi:A   \to \V$ such that
	$$ \mathrm{graph}(\phi)=\Sigma \cap \Omega'.$$
	Moreover, there exists a constant $K>0$ such that, for every $w,w' \in A$ \begin{equation}\label{intlippar}
		\| \phi(w')^{-1} \phi(w) \| \leq K \| \phi(w')^{-1} w'^{-1} w \phi(w') \|.
	\end{equation}
\end{teo}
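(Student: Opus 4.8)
The plan is to reduce to $\bar x=0$, extract from the $C^1_h$ hypothesis one \emph{quantitative stability estimate}, and close the argument with the invariance of domain; the same estimate will also give the injectivity needed for the topological step and the bound \eqref{intlippar}. \emph{Reduction and linearization.} Replacing $f$ by $y\mapsto f(\bar x\,y)$, which again lies in $C^1_h$ near $0$ with h-differential $Df(\bar x)$ at $0$ by the chain rule and left invariance, and whose zero set is $\bar x^{-1}\Sigma$, we may assume $\bar x=0$, $f(0)=0$ and $\W=\ker Df(0)$; the graph form of the statement is then recovered for $f$ by left translation through \eqref{eq:translgraph}, and \eqref{intlippar} is translation invariant. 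Since $Df(0)$ is an h-epimorphism with kernel $\W$ and $\V$ is complementary to $\W$ (Proposition~\ref{char-hepi}), the restriction $\Lambda:=Df(0)|_{\V}:\V\to\M$ is an h-isomorphism: it is injective because $\V\cap\W=\{0\}$, and surjective because $Df(0)(wv)=Df(0)(w)Df(0)(v)=Df(0)(v)$ for $w\in\W$, $v\in\V$. In particular $\dim\V=\dim\M=p$ and $\dim\W=q-p$, and evaluating the continuous, positive, $1$-homogeneous function $v\mapsto\rho(\Lambda(v),0)$ on the compact sphere $\{v\in\V:\|v\|=1\}$ gives $c_1>0$ with $\rho(Df(0)(v),0)\ge c_1\|v\|$ for every $v\in\V$. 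Finally, being the kernel of a Lie group homomorphism, $\W$ is a normal subgroup of $\G$, so \eqref{intlipWnormal} is available.

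\emph{The stability estimate.} For $x,y$ near $0$ write $x^{-1}y=\eta v$ with $\eta\in\W$, $v\in\V$; since $\eta\in\ker Df(0)$ we have $Df(0)(x^{-1}y)=Df(0)(v)$, so by the triangle inequality in $\M$ and \eqref{eq:nu},
\[
\rho\big(f(x)^{-1}f(y),\,Df(0)(v)\big)\ \le\ \rho\big(f(x)^{-1}f(y),\,Df(x)(x^{-1}y)\big)\ +\ \nu\big(Df(x),Df(0)\big)\,\|x^{-1}y\|.
\]
On a fixed small ball $B(0,r_0)$ the first term on the right is $o(d(x,y))$ \emph{uniformly} in $x$ — here I invoke the standard Lagrange-type estimate that a $C^1_h$ map is uniformly h-differentiable on compact sets — while the second is bounded by $\omega(r_0)\|x^{-1}y\|$ with $\omega(r_0)\to0$, by continuity of $Df$ at $0$. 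Combining $\rho(f(x),f(y))\ge\rho(Df(0)(v),0)-\rho(f(x)^{-1}f(y),Df(0)(v))$ with $\rho(Df(0)(v),0)\ge c_1\|v\|$ and $\|x^{-1}y\|\le\|\eta\|+\|v\|$ from \eqref{eqczero}, we obtain, after shrinking $r_0$, a constant $\epsilon_0$ with $c_1-\epsilon_0>0$ and
\[
\rho\big(f(x),f(y)\big)\ \ge\ (c_1-\epsilon_0)\,\|\pi_{\V}(x^{-1}y)\|-\epsilon_0\,\|\pi_{\W}(x^{-1}y)\|\qquad\text{for all }x,y\in B(0,r_0).
\]
This has two consequences. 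The continuous map $\Psi(x)=(\pi_{\W}(x),f(x))$, with values in $\W\times\M$, is injective on $B(0,r_0)$: if $\Psi(x)=\Psi(y)$ then $x^{-1}y\in\V$, hence $\pi_{\W}(x^{-1}y)=0$, and the estimate with $f(x)=f(y)$ forces $x=y$. Moreover, on $\Sigma\cap B(0,r_0)$ one has $\|\pi_{\V}(x^{-1}y)\|\le K\|\pi_{\W}(x^{-1}y)\|$ with $K:=\epsilon_0/(c_1-\epsilon_0)$.

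\emph{Construction of $\phi$ and the Lipschitz bound.} Identifying $\G$ and $\W\times\M$ linearly with $\R^q$ — note $\dim(\W\times\M)=(q-p)+p=q$ — the invariance of domain applied to the continuous injective map $\Psi$ shows that $\Psi$ restricts to a homeomorphism of some open neighbourhood $U\subset B(0,r_0)$ of $0$ onto an open neighbourhood of $\Psi(0)=(0,0)$. Fix an open set $A\subset\W$ with $0\in A$ and a product neighbourhood $A\times N_{\M}\subset\Psi(U)$, and for $w\in A$ set $\Phi(w):=\Psi^{-1}(w,0)$, $\phi(w):=\pi_{\V}(\Phi(w))$ and $\Omega':=\Psi^{-1}(A\times N_{\M})$. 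Then $\Omega'$ is open, $\Phi(w)=w\phi(w)$, and $\Sigma\cap\Omega'=\Psi^{-1}(A\times\{0\})=\graph(\phi)$, while uniqueness of such a $\phi$ follows from injectivity of $\Psi$. For the Lipschitz bound, take $w,w'\in A$ and $x=\Phi(w')$, $y=\Phi(w)$, which lie in $\Sigma\cap B(0,r_0)$ with $f(x)=f(y)=0$; factoring
\[
x^{-1}y=\phi(w')^{-1}w'^{-1}w\,\phi(w)=\big(\phi(w')^{-1}(w'^{-1}w)\phi(w')\big)\big(\phi(w')^{-1}\phi(w)\big)
\]
and using $w'^{-1}w\in\W$ together with the normality of $\W$, the first factor lies in $\W$ and the second in $\V$, so uniqueness of the factorization gives $\pi_{\W}(x^{-1}y)=\phi(w')^{-1}w'^{-1}w\,\phi(w')$ and $\pi_{\V}(x^{-1}y)=\phi(w')^{-1}\phi(w)$; the second consequence above is then exactly \eqref{intlippar} with this $K$. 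Continuity of $\phi$ follows since the right-hand side of \eqref{intlippar} tends to $0$ as $w\to w'$.

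\emph{Main obstacle.} The real work is the uniformity asserted in the stability estimate, i.e.\ upgrading the pointwise definition of $Df$ to a uniform ``$o(d(x,y))$'' control on a ball; this is a Lagrange/mean-value type inequality for $C^1_h$ maps between stratified groups and is essentially the only analytic input. The remaining ingredients are linear algebra for $\Lambda$, the purely topological invariance of domain, the bookkeeping matching $r_0$, $U$ and $A\times N_{\M}$, and the short factorization computation using that $\W$ is normal.
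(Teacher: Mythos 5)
The paper does not prove Theorem~\ref{IFT}: it is imported verbatim from \cite[Theorem~1.4]{Mag14}, so there is no internal proof to compare against. Judged on its own terms, your argument is correct and in fact follows the same strategy as the original: a quantitative stability estimate of the form
$\rho(f(x),f(y))\ge (c_1-\epsilon_0)\,\|\pi_{\V}(x^{-1}y)\|-\epsilon_0\,\|\pi_{\W}(x^{-1}y)\|$
on a small ball, obtained from the invertibility of $Df(\bar x)|_{\V}$ together with uniform h-differentiability, and then a topological step to produce the implicit map. Your individual reductions all check out: the translation to $\bar x=0$ is legitimate because intrinsic graphs and the intrinsic Lipschitz condition behave well under left translations (cf.\ \eqref{eq:translgraph} and Proposition~\ref{charintlip}); the lower bound $\rho(Df(0)(v),0)\ge c_1\|v\|$ on $\V$ follows from homogeneity and compactness; $\W=\ker Df(0)$ is normal, so the factorization $x^{-1}y=\big(\phi(w')^{-1}(w'^{-1}w)\phi(w')\big)\big(\phi(w')^{-1}\phi(w)\big)$ correctly identifies $\pi_\W(x^{-1}y)$ and $\pi_\V(x^{-1}y)$ and turns the stability estimate on the level set into exactly \eqref{intlippar}. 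The one place where the proof is not self-contained is the uniformity of the $o(d(x,y))$ remainder on compact sets, which you invoke as a ``Lagrange-type estimate''; this is precisely the stratified mean value inequality of \cite[Theorem~1.2]{Mag14}, it genuinely requires the stratification (bracket-generating) hypothesis, and it is the analytic heart of the theorem --- you correctly identify it as such, but be aware that without it the whole argument collapses, so a fully self-contained proof would have to reproduce it. The only genuine departure from \cite{Mag14} is the topological step: you apply invariance of domain to the global chart $\Psi=(\pi_{\W},f)$, whereas the original argument works fiberwise, showing that $v\mapsto f(wv)$ is injective on $\V$ and covers a neighbourhood of $0$ by a degree/connectedness argument. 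Your packaging is cleaner and gives openness of $\Omega'$ and uniqueness of $\phi$ in one stroke; the fiberwise route is more elementary in that it avoids invariance of domain, but the two are interchangeable once the stability estimate is in place.
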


\begin{Remark}\label{rem:iLipc}
According to the definition of intrinsic Lipschitz function, taking into account \eqref{intlipWnormal}, condition \eqref{intlippar} can be rephrased saying that $\phi$ is $K$-intrinsic Lipschitz.
\end{Remark}


\subsection{Intrinsic regularity of $(\G,\M)$-regular sets of $\G$}\label{sect:levelsets}

In this section, we present a stronger version of Theorem~\ref{IFT}, where
the implicit map is uniformly intrinsically differentiable and the level
set is represented with respect to a suitably large class of factorizations.
We finally introduce the notion of $(\G,\M)$-regular set of $\G$.

The following theorem essentially corresponds to \cite[Theorem~4.3.7]{CorniPhD}, where the first equality of \eqref{eq:T_xkerDf} can be seen as a consequence of Theorem~\ref{characterizationP}.

\begin{teo}[Uniform intrinsic differentiability of level sets] 	\label{intdiffpar}
Let $\G$ and $\M$ be stratified groups, and let $\Omega \subset \G$ be an open set. We consider $f \in C^1_h(\Omega, \M)$ and
define $\Sigma=f^{-1}(0)$. Let us assume that there are an open set $\Omega' \subset \G$ 
such that $\Omega'\cap\Sigma\neq\emptyset$ and a homogeneous subgroup $\V \subset \G$  such that $Df(x)|_{\V} : \V \to \M$ is an $h$-isomorphism for every $x \in \Sigma \cap \Omega'$.
By our assumptions, we can consider a fixed couple of complementary subgroups $(\W,\V)$ of $\G$.
We also assume that we have an open set $A \subset \W$, $\phi: A \to \V$ with graph map $\Phi(w)=w\phi(w)$, such that $\Sigma \cap \Omega'= \Phi(A)$.
Then $\phi$ is uniformly intrinsically differentiable at any point of $A$ and
\beq\label{eq:T_xkerDf}
\T_{\Phi(w)}=\mathrm{graph}(d\phi_w)=\ker Df(\Phi(w))
\eeq
for every $w\in A$.
\end{teo}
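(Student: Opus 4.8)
The plan is to reduce the statement to the known implicit function theorem (Theorem~\ref{IFT}) together with its quantitative consequences, and then to upgrade pointwise intrinsic differentiability to the uniform version by exploiting the continuity of $Df$. First I would observe that by Proposition~\ref{char-hepi}, the hypothesis that $Df(x)|_\V:\V\to\M$ is an h-isomorphism for every $x\in\Sigma\cap\Omega'$ is exactly what guarantees that $Df(x)$ is an h-epimorphism with $\ker Df(x)$ complementary to $\V$, so $(\W,\V)$ with $\W=\ker Df(\bar x)$ is a legitimate couple of complementary subgroups and Theorem~\ref{IFT} applies near each point; the resulting implicit map $\phi:A\to\V$ is intrinsic Lipschitz by \eqref{intlippar} and Remark~\ref{rem:iLipc}. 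The identity $\ker Df(\Phi(w))=\mathrm{graph}(d\phi_w)$ in \eqref{eq:T_xkerDf} should follow once intrinsic differentiability is established: the tangent subgroup to $\mathrm{graph}(\phi)$ at $\Phi(w)$ is, on the one hand, $\mathrm{graph}(d\phi_w)$ by Theorem~\ref{characterizationP}, and on the other hand it must coincide with $\ker Df(\Phi(w))$ because $f$ is h-differentiable and its level set is, to first order, the affine copy of $\ker Df$; this is the ``standard relationship between the differential and the tangent space to a level set'' adapted to the homogeneous setting, and I would prove it by a blow-up argument showing $\delta_{1/r}\big(\Phi(w)^{-1}(\Sigma\cap\Omega')\big)$ converges locally to $\ker Df(\Phi(w))$.

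The heart of the matter is the uniform intrinsic differentiability. Here I would fix $\bar w\in A$, set $x=\Phi(\bar w)$, and try to show that the intrinsically linear map $L$ with $\mathrm{graph}(L)=\ker Df(x)$ satisfies the uniform estimate \eqref{equid}. The natural route is to translate everything to the origin: for $w'$ near $\bar w$ and $w$ small, the quantity $\|L(w)^{-1}\phi_{\Phi(w')^{-1}}(w)\|$ measures the failure of the translated graph $\Phi(w')^{-1}(\mathrm{graph}(\phi))=\mathrm{graph}(\phi_{\Phi(w')^{-1}})$ to be tangent to $\ker Df(x)$ at $0$. Using that this translated graph is the zero level set of $y\mapsto f(\Phi(w')\,y)$, whose differential at $0$ is $Df(\Phi(w'))\circ (L_{\Phi(w')})_*$ — which, after accounting for the group translation, is close to $Df(x)$ uniformly in $w'$ by the continuity of $Df$ and the continuity of $w'\mapsto\Phi(w')$ — I would quantify the first-order agreement. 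Concretely, the intrinsic Lipschitz bound \eqref{intlippar} gives a uniform cone containment $\mathrm{graph}(\phi_{\Phi(w')^{-1}})\subset X(0,\V,\alpha)$ with $\alpha$ independent of $w'$; combining this with the uniform smallness of $\rho(f(\Phi(w')y),\;Df(x)(y))$ for $\|y\|$ small (from the definition of the h-differential plus continuity of $Df$ and the modulus of continuity being locally uniform on the compact set $\{\Phi(w'):\|\bar w^{-1}w'\|\le r_0\}$) should force $y$ to lie in a thin neighborhood of $\ker Df(x)=\mathrm{graph}(L)$, which is precisely the estimate \eqref{equid} after dividing by $\|w\|$ and taking suprema.

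The main obstacle I anticipate is making the ``uniform first-order control'' genuinely uniform in the base point $w'$: the h-differential $Df(\Phi(w'))$ has its own little-$o$ remainder, and a priori the modulus of continuity of $f$ at $\Phi(w')$ could degenerate as $w'$ varies. This is where $f\in C^1_h(\Omega,\M)$ is essential — one needs a locally uniform version of h-differentiability, i.e. that $\rho\big(Df(z)(z^{-1}y),\,f(z)^{-1}f(y)\big)=o(d(z,y))$ with the little-$o$ uniform for $z$ in a compact set, which is a standard but nontrivial consequence of continuity of the differential (an integral/mean-value argument along horizontal curves, as in Pansu's theory). The secondary technical point is tracking how the group translation $y\mapsto\Phi(w')y$ interacts with the splitting $(\W,\V)$ when passing from $f$ to $f\circ L_{\Phi(w')}$ and hence from $\ker Df(x)$ to the tangent of the translated graph; here the invertibility and continuity of the restricted projections $\pi^{\W,\V}_{\W,\elle}$ from Definition~\ref{d:projWVM} and formula \eqref{eq:invproj}, together with \eqref{eqczero}, should keep all constants under control. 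Once these uniformities are in place, the estimate \eqref{equid} and the identifications in \eqref{eq:T_xkerDf} follow, completing the proof.
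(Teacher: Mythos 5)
A preliminary remark: the paper does not actually prove Theorem~\ref{intdiffpar}; it imports it verbatim from \cite[Theorem~4.3.7]{CorniPhD}, adding only that the first equality in \eqref{eq:T_xkerDf} is a consequence of Theorem~\ref{characterizationP}. So there is no in-paper argument to compare yours against, and your proposal has to be judged on its own merits. On those merits it is essentially sound: you correctly identify that the whole theorem reduces to establishing \eqref{equid} with $L$ the intrinsically linear map whose graph is $\ker Df(\Phi(\bar w))$ (uniqueness of the intrinsic differential and Theorem~\ref{characterizationP} then give both equalities in \eqref{eq:T_xkerDf} at once), and you correctly isolate the two genuine ingredients: the locally uniform (Pansu-type) differentiability of $C^1_h$ maps on compact sets, and a locally uniform intrinsic Lipschitz bound $\|\phi_{\Phi(w')^{-1}}(w)\|\le L_0\|w\|$ coming from Theorem~\ref{IFT} via the cone characterization \eqref{eq:graphLip} (which, importantly, depends only on $\V$ and on the set $\Sigma\cap\Omega'$, so it transfers from the normal complement $\ker Df(\bar x)$ used in Theorem~\ref{IFT} to the arbitrary complement $\W$ of the statement). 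Note also that the stratification hypothesis on $\G$ and $\M$, in force in this section, is exactly what makes the uniform differentiability on compact sets available.

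Where your sketch can be tightened is the final step, which you phrase qualitatively (``force $y$ to lie in a thin neighborhood of $\ker Df(x)$''); this can be closed by pure algebra of the h-differential, avoiding any geometric neighborhood argument. Write $x=\Phi(\bar w)$, $x'=\Phi(w')$, $v=\phi_{x'^{-1}}(w)$ and $y=wv$, so that $f(x')=f(x'y)=0$. Since $wL(w)\in\ker Df(x)$ one has $Df(x)(L(w))=Df(x)(w)^{-1}$, hence $Df(x)\big(L(w)^{-1}v\big)=Df(x)(w)\,Df(x)(v)=Df(x)(y)$; as $Df(x)|_{\V}$ is an h-isomorphism with Lipschitz inverse and $L(w)^{-1}v\in\V$, this gives
\begin{equation*}
\|L(w)^{-1}v\|\le C\,\|Df(x)(y)\|_\rho\le C\Big(\|Df(x')(y)\|_\rho+\|y\|\,\nu\big(Df(x),Df(x')\big)\Big),
\end{equation*}
using the homogeneity of the distance \eqref{eq:nu}. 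The first term is $o(\|y\|)$ uniformly in $x'$ by the locally uniform differentiability of $f$ (since $f(x')^{-1}f(x'y)=0$), the second tends to $0$ relative to $\|y\|$ by continuity of $Df$ and of $\Phi$, and $\|y\|\le(1+L_0)\|w\|$ by the intrinsic Lipschitz bound; dividing by $\|w\|$ yields \eqref{equid}. Two minor inaccuracies in your write-up are harmless: the Pansu differential of $y\mapsto f(zy)$ at $0$ is exactly $Df(z)$ (no extra pushforward by the left translation is needed, since h-differentiability is already formulated through group translations), and the blow-up argument you propose for $\T_{\Phi(w)}=\ker Df(\Phi(w))$ is redundant once the above estimate is in place.
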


Combining Theorem~\ref{intdiffpar} and Theorem~\ref{IFT} we immediately obtain the following
stronger version of the implicit function theorem.

\begin{teo}[Implicit function theorem]\label{corintdiffparam}
Let $\G$ and $\M$ be stratified groups, and let $\Omega \subset \G$ be an open set. Let $f \in C^1_h(\Omega, \M)$ and set $\Sigma=f^{-1}(0)$. For $\bar{x} \in \Sigma$ we assume that there exists a homogeneous subgroup $\V$ of $\G$ such that $Df(\bar{x})|_{\V}:\V\to\M$ is invertible. We can fix an arbitrary couple 
of complementary subgroups $(\W,\V)$. Then there exist an open neighbourhood $\Omega' \subset \Omega$ of $\bar{x}$ and a unique uniformly intrinsically differentiable map $\phi:A \to \V$, where $A \subset \W$ is open, $\pi^{\W,\V}_{\W}(\bar{x}) \in A$, such that $\Sigma \cap \Omega'=\mathrm{graph}(\phi)$. 
In addition, defining the graph map $\Phi(w)=w\phi(w)$,  
for every $w \in \Phi^{-1}(\Sigma \cap \Omega')$ we have $\mathrm{graph}(d\phi_w)=\ker Df(\Phi(w))$.
\end{teo}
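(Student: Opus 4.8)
The plan is to obtain Theorem~\ref{corintdiffparam} as a direct combination of Theorem~\ref{IFT} and Theorem~\ref{intdiffpar}, reconciling the hypotheses of the two statements. First I would observe that the assumption that $Df(\bar x)|_{\V}:\V\to\M$ is invertible means precisely that $Df(\bar x)|_\V$ is an h-isomorphism, which by Proposition~\ref{char-hepi} (equivalence of (i) and (iii)) tells us that $Df(\bar x):\G\to\M$ is an h-epimorphism with kernel, say, $\K=\ker Df(\bar x)$, and moreover $\V$ is complementary to $\K$. At this point one is in a position to invoke Theorem~\ref{IFT} with $\W$ replaced by $\K$: there exist open sets $\Omega'\subset\G$, $A_0\subset\K$ and a unique intrinsic Lipschitz $\psi:A_0\to\V$ with $\mathrm{graph}(\psi)=\Sigma\cap\Omega'$.

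The slightly delicate point is that Theorem~\ref{IFT} produces the implicit map over the specific subgroup $\K=\ker Df(\bar x)$, whereas Theorem~\ref{corintdiffparam} claims the representation over an \emph{arbitrary} complementary subgroup $\W$ to $\V$. To handle this I would use the change-of-factorization machinery: if $(\W,\V)$ is any couple of complementary subgroups, then the restricted group projection $\pi^{\K,\V}_{\K,\W}:\W\to\K$ (Definition~\ref{d:projWVM}) is invertible with inverse $\pi^{\W,\V}_{\W,\K}$ by \eqref{eq:invproj}. One then transports the graph: since $\mathrm{graph}(\psi)$ is an intrinsic Lipschitz $\V$-graph, it is also the intrinsic graph of a (unique) map $\phi:A\to\V$ over $\W$, where $A=\pi^{\W,\V}_{\W,\K}(A_0)$ — indeed a set $S\subset\G$ whose intersection with every coset $x\V$ is a singleton is simultaneously the intrinsic graph of a map from $\W$ and of a map from $\K$, because both $\W$ and $\K$ are complementary to $\V$. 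Continuity/openness of $A$ follows from that of $A_0$ together with the bi-Lipschitz character of the restricted projection, and $\Sigma\cap\Omega'=\mathrm{graph}(\phi)$ is preserved. Moreover $\pi^{\W,\V}_{\W}(\bar x)\in A$ since $\bar x\in\Sigma\cap\Omega'$.

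Next, to upgrade the regularity of $\phi$, I would check that the hypotheses of Theorem~\ref{intdiffpar} hold: we need a homogeneous subgroup $\V$ such that $Df(x)|_\V:\V\to\M$ is an h-isomorphism for \emph{every} $x\in\Sigma\cap\Omega'$, not merely at $\bar x$. This is where the continuity $Df\in C^1_h(\Omega,\M)$ enters: the set of h-homomorphisms $L$ with $L|_\V$ an h-isomorphism is open in $\mathcal{L}_h(\G,\M)$ with respect to the distance $\nu$ of \eqref{eq:nu} (being an isomorphism onto $\M$ of the fixed finite linearly dimensional $\V$ is an open condition, e.g. via nonvanishing of $J_\V f$), so by continuity of $x\mapsto Df(x)$ we may shrink $\Omega'$ to a smaller open neighbourhood of $\bar x$ on which $Df(x)|_\V$ stays an h-isomorphism for all $x\in\Sigma\cap\Omega'$. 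With this, Theorem~\ref{intdiffpar} applies (using the factorization $(\W,\V)$ and the map $\phi$ just constructed) and yields that $\phi$ is uniformly intrinsically differentiable at every point of $A$, together with the identity $\T_{\Phi(w)}=\mathrm{graph}(d\phi_w)=\ker Df(\Phi(w))$ for every $w\in A$, which is exactly the last assertion.

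The main obstacle I anticipate is purely bookkeeping rather than conceptual: carefully shrinking $\Omega'$ so that all three features survive simultaneously — the implicit-graph representation from Theorem~\ref{IFT}, the pointwise h-isomorphism condition needed for Theorem~\ref{intdiffpar}, and the transfer of the factorization from $\K$ to the arbitrary $\W$ — while checking that uniqueness of $\phi$ is not lost in the process. Uniqueness follows because any two maps $A\to\V$ with the same intrinsic graph over the same $\W$ must coincide, the intrinsic graph over a fixed factorization determining the map. None of these steps requires new ideas beyond what is already assembled in Sections~\ref{sect:NotFactHom}--\ref{sect:levelsets}, so the proof is short.
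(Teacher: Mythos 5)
Your proposal is correct and follows essentially the same route as the paper: apply Theorem~\ref{IFT} over the kernel $\ker Df(\bar x)$, transfer the resulting graph to the arbitrary complement $\W$ via the restricted group projections of Definition~\ref{d:projWVM}, shrink $\Omega'$ using the continuity of $x\mapsto Df(x)$ so that $Df(x)|_\V$ stays invertible, and then invoke Theorem~\ref{intdiffpar}. The only cosmetic difference is that the paper carries out the transfer step by writing an explicit formula $\phi(w)=(\pi_{\V}^{\elle,\V}(w))^{-1}\psi(w(\pi_{\V}^{\elle,\V}(w))^{-1})$, whereas you argue abstractly via the common left cosets of $\V$; both are valid.
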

\begin{proof}
Set $\elle=\ker Df(\bar{x})$ and $\bar{\ell}=\pi_{\elle}^{\elle,\V}(\bar{x})$.
By Theorem~\ref{IFT} there exists a unique map $\psi:A' \to \V$, where $A' \subset \elle$ is an open set with $\bar{\ell} \in A'$, and there exists an open set $\Omega' \subset\Omega$ such that $$\mathrm{graph}(\psi)=\Sigma \cap \Omega'.$$ On the other side, for every $\ell \in \elle$ we may consider $\ell=\pi_{\W}^{\W,\V}(\ell) \pi_{\V}^{\W,\V}(\ell)$, hence $$\pi_{\W}^{\W,\V}(\ell)= \ell \pi_{\V}^{\W,\V}(\ell)^{-1}.$$
Consequently, applying the projection $\pi_{\V}^{\elle,\V}$ we get that $\pi_{\V}^{\elle,\V}(\pi_{\W}^{\W,\V}(\ell))=\pi_{\V}^{\W,\V}(\ell)^{-1}$ and then surely $$\pi_{\V}^{\W,\V}(\ell)=(\pi_{\V}^{\elle,\V}(\pi_{\W}^{\W,\V}(\ell)))^{-1}.$$
Thus, for every $\ell \in A'$ we can set $w=\pi_{\W}^{\W,\V}(\ell)$ and we have
\begin{align*}
	\ell \psi(\ell)&= \pi_{\W}^{\W,\V}(\ell) \pi_{\V}^{\W,\V}(\ell) \psi(\pi_{\W}^{\W,\V}(\ell) \pi_{\V}^{\W,\V}(\ell)) \\
	&= \pi_{\W}^{\W,\V}(\ell)(\pi_{\V}^{\elle,\V}(\pi_{\W}^{\W,\V}(\ell)))^{-1} \psi(\pi_{\W}^{\W,\V}(\ell) (\pi_{\V}^{\elle,\V}(\pi_{\W}^{\W,\V}(\ell)))^{-1}) \\
	&= w(\pi_{\V}^{\elle,\V}(w))^{-1} \psi(w (\pi_{\V}^{\elle,\V}(w))^{-1}).
\end{align*}
Therefore, setting $A=\pi_{\W}^{\W,\V}(A')$ and $\phi:A \to \V$, $\phi(w)=(\pi_{\V}^{\elle,\V}(w))^{-1} \psi(w (\pi_{\V}^{\elle,\V}(w))^{-1})$ for every $w \in A$, we have
$$\mathrm{graph}(\phi)=\mathrm{graph}(\psi)=\Sigma \cap \Omega'.$$
Moreover, we observe that $\pi_{\W}^{\W,\V}(\bar{\ell})=\pi_{\W}^{\W,\V}(\bar{x}) \in \pi^{\W,\V}_\W(A')=A$.
Now, let us notice that the continuity of $x\to Df(x)$ ensures the existence of an open neighbourhood $\widetilde{\Omega} \subset \Omega$ of $\bar{x}$ such that $Df(x)|_{\V}$ is invertible for every $x \in \widetilde{\Omega}$. Finally, we can apply Theorem~\ref{intdiffpar}, where $\Omega'$ is replaced 
with $\Omega'\cap\widetilde{\Omega}$.
\end{proof}

\begin{Remark}\label{rem:exglobA}
Notice that if in the assumptions of Theorem~\ref{corintdiffparam} 
we also assume that for some $\Omega'\subset\Omega$ we have $\Omega' \subset \Omega\neq\emptyset$ and $J_\V f(y)> 0$ for any $y \in \Sigma \cap \Omega'$, then there exists a unique map $\phi:A\to\V$ such that
its graph mapping $\Phi:A\to\G$ satisfies $\Sigma \cap \Omega' = \Phi(A)$, where $A=\pi^{\W,\V}_\W(\Sigma \cap \Omega')\subset\W$ is an open set. This is an easy consequence of existence and uniqueness of the local graph mapping, proved in Theorem~\ref{corintdiffparam}.
\end{Remark}

From \cite[Definition~3.5]{Mag5} and \cite[Definition~10.2]{Mag14}, level sets of differentiable mappings, with specific conditions on the differential, constitute the ``intrinsic regular sets''. We recall this notion of regularity in the next definition.

\begin{deff}\label{def:GMregular}
	Let $\G$ and $\M$ be stratified groups. A subset $S\subset \G$ is a \textit{$(\G,\M)$-regular set of $\G$} if for every point $\bar{x} \in S$ there exist an open neighbourhood $\Omega \subset \G$ of $\bar{x}$
	and a continuously differentiable map $f: \Omega \to \M$ such that $\Sigma \cap \Omega=f^{-1}(0)$ and $Df(x): \G \to \M$ is an h-epimorphism
	for every $x\in\Omega$. 
\end{deff}

\begin{Remark}
	An important consequence of Theorem~\ref{corintdiffparam} is that all $(\G,\M)$-regular sets of $\G$ can be locally parametrized by uniformly intrinsically differentiable maps.
\end{Remark}

Notice that if $Df(z): \G \to \M$ is an h-epimorphism and $f$ is continuously
differentiable, then Proposition~\ref{char-hepi} gives us a homogeneous subgroup $\V\subset\G$ such that $Df(\bar x)|_{\V}$ is invertible. 
Thus, the continuity of $x\to Df(x)$ on $\Omega$ gives an open subset $\Omega' \subset \Omega$ containing $z$ such that $Df(x)|_{\V}$ is invertible for every $x \in \Omega'$. 
The same proposition shows that $Df(x)$ is an h-epimorphism for every $x\in\Omega'$. 
We have shown that the set of points for which the differential is an h-epimorphism is open in $\G$.

We conclude this section by briefly mentioning that a ``dual class'' of 
regular sets is also available, corresponding to images
of differentiable mappings. These sets are called $(\M,\G)$-regular sets of $\G$, 
see \cite[Definition~10.3]{Mag14}. They can be locally represented as 
intrinsic graphs as well, by the rank theorem proved in \cite[Theorem~1.5]{Mag14}.

\section{Algebraic lemma about the Jacobian of projections}
\label{sect:jacproj}

The section is entirely devoted to establishing Lemma~\ref{lm:MVPsubgroups}.
The main point is to compute the Jacobian of a projection with respect to
two couples of subgroups $(\W,\V)$ and $(\U,\V)$ of $\G$. The key aspect is that the second factor in the two factorizations does not change. 

We point out that our computations are carried out
with respect to a fixed scalar product on $\G$, that automatically induces a scalar product on $\Lambda_{q}(\G)$.
In the sequel, associating a simple $k$-vector to a $k$-dimensional vector space will be useful. 
Let $V \subset \G$ be a $k$-dimensional subspace and denote by $ \textbf{V} \in \Lambda_k \G\sm\set{0}$ an
\textit{orienting $k$-vector for $V$}, i.e.\ a simple $k$-vector $\textbf{V}$ such that $V=\{ v \in \G : \textbf{V} \wedge v=0 \}$.
We observe that when $|\textbf{V}|=1$ and an arbitrary orthonormal basis $(v_1, \dots , v_k)$ of $V$ is fixed, then
$\textbf{V}=\pm v_1 \wedge \dots \wedge v_k$.

\begin{lem}[Algebraic lemma]\label{lm:MVPsubgroups}
Let $\V\subset \G$ be a $p$-dimensional homogeneous subgroup and consider two homogeneous subgroups $\U$, $\W$ such that $(\W,\V)$ and $(\U,\V)$ are couples of complementary subgroups of $\G$.
Let $\bV$ be an orienting unit $p$-vector of $\V$ and let $\bW$, $\bU$ be orienting unit $(q-p)$-vectors of $\W$ and $\U$, respectively.
Then for every Borel set $B\subset \U$, we have
$$ (\pi_{\U,\W}^{\mathbb{U,V}})_\sharp \mathcal{H}_{|\cdot|}^{q-p} (B)=
\mathcal{H}_{|\cdot|}^{q-p}(\pi_{\W,\mathbb{U}}^{\mathbb{W,V}}(B))= \frac{| \bV \wedge \mathbf{U} |}{|\bV \wedge \bW |} \mathcal{H}^{q-p}_{|\cdot|}(B),
$$
where the projections $\pi^{\U,\V}_{\U,\W}$ and $\pi^{\U,\V}_{\W,\U}$ have been introduced in Definition~\ref{d:projWVM}. 
\end{lem}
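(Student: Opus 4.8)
The statement has two equalities. The first one, $(\pi_{\U,\W}^{\U,\V})_\sharp \mathcal{H}_{|\cdot|}^{q-p}(B) = \mathcal{H}_{|\cdot|}^{q-p}(\pi_{\W,\U}^{\W,\V}(B))$, is a triviality: by \eqref{eq:invproj} the restricted projection $\pi_{\W,\U}^{\W,\V}$ is the inverse of $\pi_{\U,\W}^{\U,\V}$, so both sides are by definition $\mathcal{H}_{|\cdot|}^{q-p}((\pi_{\U,\W}^{\U,\V})^{-1}(B))$ read the other way around; equivalently the pushforward of a measure under a bijection evaluated on $B$ is the measure of the preimage. So the content is the second equality, namely that the map $T := \pi_{\W,\U}^{\W,\V}:\U\to\W$ multiplies the $(q-p)$-dimensional Euclidean Hausdorff measure by the constant $|\bV\wedge\bU|/|\bV\wedge\bW|$.

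The plan is to show that $T$ is the restriction to $\U$ of a \emph{linear} bijection of $\G$, and then apply the classical linear area formula. Concretely: for $u\in\U$, write $u = T(u)\, \pi_\V^{\W,\V}(u)$ by the definition of the group projections; hence $T(u) = u\,\bigl(\pi_\V^{\W,\V}(u)\bigr)^{-1}$. Since $\U$ and $\V$ are complementary homogeneous \emph{subspaces} of $\G$ (homogeneous subgroups are linear subspaces), the map $u\mapsto \pi_\V^{\W,\V}(u)$ is the restriction to $\U$ of the linear projection $P_\V$ of $\G=\U\oplus\V$ onto $\V$ along $\W$ — this is where one must check that the group projection associated with the splitting $(\W,\V)$, when restricted to the subgroup $\U$, agrees with the \emph{linear} projection along $\W$ onto $\V$. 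This is not automatic because group projections are genuinely nonlinear in general; the point is the algebraic fact that $\W,\U,\V$ are all subgroups and $(\W,\V)$, $(\U,\V)$ are both complementary pairs, so on the subspace $\U$ the group-theoretic decomposition $u = w v$ with $w\in\W$, $v\in\V$ must be forced, by uniqueness, to be an affine/linear one. I would establish this by taking $u\in\U$, writing $u=wv$ ($w\in\W$, $v\in\V$), and then using the BCH formula \eqref{eq:BCH} together with the fact that $\U$, $\W$, $\V$ are all \emph{graded} subspaces (closed under $\delta_t$) to peel off degrees and conclude that $w$ and $v$ depend linearly on $u$; alternatively, dilating $u\mapsto\delta_t u$ and comparing homogeneity degrees. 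Granting this, $T(u) = u - P_\V(u) = (\mathrm{Id}-P_\V)|_\U(u)$ is linear on $\U$, and in fact $T = (\mathrm{Id}-P_\V)|_\U:\U\to\W$ is a linear isomorphism (its inverse is $\pi_{\U,\W}^{\U,\V}$, also linear by the same argument with the roles of $\U$ and $\W$ swapped).

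With $T:\U\to\W$ a linear isomorphism between $(q-p)$-dimensional subspaces of $\G$, the classical area formula for linear maps gives $\mathcal{H}^{q-p}_{|\cdot|}(T(B)) = J_{q-p}(T)\,\mathcal{H}^{q-p}_{|\cdot|}(B)$ for every Borel $B\subset\U$, where $J_{q-p}(T) = |\Lambda_{q-p}T(\bU)| = |T(\bU)|$ (the norm of the image of a unit orienting $(q-p)$-vector). So it remains to identify $|T(\bU)| = |\bV\wedge\bU|/|\bV\wedge\bW|$. Since $T = (\mathrm{Id}-P_\V)|_\U$ and $P_\V$ has image $\V$, for any $u_1,\dots,u_{q-p}$ spanning $\U$ the vectors $T(u_1),\dots,T(u_{q-p})$ span $\W$, and $u_i = T(u_i) + P_\V(u_i)$ with $P_\V(u_i)\in\V$. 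Wedging with a unit $p$-vector $\bV$ of $\V$ annihilates every $\V$-component, so $\bV\wedge u_1\wedge\cdots\wedge u_{q-p} = \bV\wedge T(u_1)\wedge\cdots\wedge T(u_{q-p})$. Taking the $u_i$ to be an orthonormal basis of $\U$ so that $u_1\wedge\cdots\wedge u_{q-p} = \pm\bU$, we get $\bV\wedge\bU = \pm\,\bV\wedge\Lambda_{q-p}T(\bU)$; writing $\Lambda_{q-p}T(\bU) = |T(\bU)|\,\bW$ (both are orienting $(q-p)$-vectors of $\W$, up to sign) yields $|\bV\wedge\bU| = |T(\bU)|\,|\bV\wedge\bW|$, i.e. $J_{q-p}(T) = |T(\bU)| = |\bV\wedge\bU|/|\bV\wedge\bW|$, as claimed. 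Note $|\bV\wedge\bW|\neq 0$ precisely because $\W\cap\V=\{0\}$ and $\dim\W+\dim\V=q$, so the division is legitimate.

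\textbf{Main obstacle.} The only non-routine point is the linearity claim: that the group projection $\pi_\V^{\W,\V}$, restricted to the subgroup $\U$, coincides with the linear projection of $\G$ onto $\V$ along $\W$. All the multilinear-algebra bookkeeping afterwards is standard. I expect this step to require a careful BCH/grading argument (peeling off homogeneity layers, using that $\U,\W,\V$ are graded subspaces and that both $(\W,\V)$ and $(\U,\V)$ factorize $\G$), and it is presumably the reason the paper calls Lemma~\ref{lm:MVPsubgroups} "highly nontrivial" and devotes an entire section to it.
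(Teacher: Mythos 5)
There is a genuine gap: the central claim of your plan, namely that the restricted group projection $\pi^{\W,\V}_{\W,\U}:\U\to\W$ is linear (equivalently, that the group factorization $u=wv$ restricted to the subgroup $\U$ agrees with the linear direct-sum decomposition), is false in general, and no BCH/grading argument will rescue it. A counterexample already lives in the first Heisenberg group $\G=\H^1$ with coordinates $(x,y,t)$ and product $(x,y,t)(x',y',t')=(x+x',y+y',t+t'+\tfrac12(xy'-yx'))$: take $\V=\set{(x,0,0)}$, $\W=\set{(0,y,t)}$ and $\U=\set{(\lambda y,y,t):y,t\in\R}$ with $\lambda\neq0$. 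All three are homogeneous subgroups and both $(\W,\V)$ and $(\U,\V)$ are complementary couples, yet for $u=(\lambda y,y,t)\in\U$ one computes $\pi^{\W,\V}_{\W}(u)=(0,\,y,\,t+\tfrac{\lambda}{2}y^2)$, which is quadratic in $u$. The precise point where your derivation breaks is the identity $T(u)=u-P_\V(u)$: in fact $T(u)=u\,(\pi_\V(u))^{-1}$ with a \emph{group} product, not a vector difference, and the BCH correction terms are exactly the nonlinear part. Consequently you cannot invoke the area formula for linear maps; $T$ is only a polynomial diffeomorphism.

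What remains true, and is what the paper actually proves, is that this polynomial diffeomorphism has \emph{constant} Jacobian determinant equal to the claimed ratio. The paper's route is to write the two parametrizations $\Psi_1(x)=\big(\sum_{i\notin I_\V}x_iw_i\big)\big(\sum_{i\in I_\V}x_iv_i\big)$ and $\Psi_2(x)=\big(\sum_{i\notin I_\V}x_iu_i\big)\big(\sum_{i\in I_\V}x_iv_i\big)$ in graded coordinates, observe via the BCH formula that their Jacobian matrices are block lower triangular with \emph{constant} diagonal blocks (the strictly lower blocks do depend on $x$), so that $J\Psi_1\equiv|\bV\wedge\bW|$ and $J\Psi_2\equiv|\bV\wedge\bU|$, and then identify $\pi^{\W,\V}_{\W,\U}$ with a compression of $\Psi_1^{-1}\circ\Psi_2$ whose Jacobian is the product of the diagonal-block determinants, hence constant and equal to $|\bV\wedge\bU|/|\bV\wedge\bW|$. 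Your final multilinear computation ($\bV\wedge\bU=\pm\,|T(\bU)|\,\bV\wedge\bW$ for the \emph{linear} projection along $\V$) is correct and explains where the constant comes from, but by itself it only computes the Jacobian of the linear part; the missing step is precisely the triangularity argument showing that the nonlinear corrections do not contribute to the determinant. Your first equality (pushforward versus image under the inverse bijection, via \eqref{eq:invproj}) and the observation that $|\bV\wedge\bW|\neq0$ are fine.
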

\begin{proof}
We set $\ell_i=\mathrm{dim}(\V \cap V_i)$ and notice that 
\[
m_i= \mathrm{dim}(\W \cap V_i)= \mathrm{dim}(\U \cap V_i)
\] 
for every $i=1, \dots, \iota$. Surely $0 \leq \ell_i, m_i \leq n_i$ and $m_i=n_i-\ell_i$. 
Moreover $\sum_{i=1}^{\iota} \ell_i=p$, $\sum_{i=1}^{\iota} m_i=q-p$. 
Let us introduce an orthonormal graded basis 
\[ 
(v_1, \ldots, v_{\ell_1}, v_{h_1+1}, \ldots, v_{h_1+\ell_1}, \ldots, v_{h_{\iota-1}+1}, \ldots, v_{h_\iota})
\]
of $\V$ such that
\begin{align*}
\set{v_1, \dots, v_{\ell_1}} &\subset \V \cap V_1\\
\set{v_{h_1+1}, \dots, v_{h_1+\ell_2}} &\subset \V \cap V_2\\
 & \ldots \\
\set{v_{h_{\iota-1}+1}, \dots, v_{h_{\iota-1}+\ell_{\iota}}} &\subset \V \cap V_\iota.
\end{align*}
We complete the previous vectors to an orthonormal graded basis of $\G$. Then for each $j=1, \dots, \iota$, we choose $m_j$ vectors $v_{h_{j-1}+\ell_j+1}, \dots, v_{h_j} \in V_j \setminus (\V \cap V_j)$ so that $(v_1, \dots, v_q)$ is a graded orthonormal basis of $\G$.
 We introduce the following set of indices $$I_{\V}= \{ n \in \{1, \dots, q \}: \text{there exists} \ v_n\in\V \}.$$
By $j \notin I_{\V}$, we mean $j \in \{ 1, \dots, q \} \setminus I_{\V}$.
Let us choose two orthonormal graded bases $(w_i)_{i \notin I_{\V}}$ and $(u_i)_{i \notin I_{\V}}$ of $\W$ and $\U$, respectively, such that
\begin{align*}
\set{w_{\ell_1+1}, \dots, w_{h_1}} \subset \W \cap V_1,& \qquad  \set{u_{\ell_1+1}, \dots, u_{h_1}} \subset \U \cap V_1, \\
\set{w_{h_1+\ell_2+1}, \dots w_{h_2}} \subset \W \cap V_2,& \qquad \set{u_{h_1+\ell_2+1}, \dots u_{h_2}} \subset \U \cap V_2, \\
&\!\!\cdots \\
\set{w_{h_{\iota-1}+\ell_{\iota}+1}, \dots, w_\iota} \subset \W \cap V_\iota, & \qquad  \set{u_{h_{\iota-1}+\ell_{\iota}+1}, \dots, u_\iota} \subset \U \cap V_\iota.
\end{align*}
Let us introduce the linear isomorphism $i_\G:\G\to\R^q$,
\[
i_\G\pa{\sum_{i=1}^q x_i v_i}=(x_1, \dots, x_{q}).
\]
Since $(v_1, \dots, v_q)$ is an orthonormal basis of $\G$, the map $i_{\G}$ is an isometry between $\G$, equipped with our fixed scalar product $ \langle \cdot, \cdot \rangle$, and $\R^q$, equipped with the standard Euclidean scalar product. We define also the following isometries $i_\W : \mathbb{W} \to \mathbb{R}^{q-p}$, 
\[
i_\W\pa{\sum_{i \notin I_{\V}} x_iw_i }=(x_{\ell_1+1}, \ldots, x_{h_1}, x_{h_1+\ell_2+1} ,\ldots, x_{h_2}, \dots, x_{h_{\iota-1}+\ell_{\iota}+1}, \dots, x_{q}),
\]
$i_\U: \U\to \mathbb{R}^{q-p}$, 
\[
i_\U\pa{\sum_{i\notin I_{\V}} x_i u_i}=(x_{\ell_1+1}, \ldots, x_{h_1}, x_{h_1+\ell_2+1} ,\ldots, x_{h_2}, \ldots, x_{h_{\iota-1}+\ell_{\iota}+1}, \ldots, x_{q}).
\]
Since $\W$ and $\V$ are complementary, the smooth
map $\Psi_1:\mathbb{R}^{q}\to\G$,
\begin{equation}\label{eq:Psi1}
\Psi_1(x_1, \dots , x_{q})=	\bigg(\sum_{i \notin I_{\V}} x_i w_i \bigg) \bigg( \sum_{i \in I_{\V}} x_i v_i \bigg)
\end{equation}
is a diffeomorphism.
To denote wedge products of ordered sets of vectors we use the notation
$$
\bigwedge_{i \in I}x_i=x_{i_1} \wedge \ldots x_{i_m}, \mathrm{\  with \ } i_1, \ldots, i_m \in I \mathrm{ \ and \ }i_1 < i_2 < \ldots < i_m.
$$
where $I \subset \N^m$ for some $m \in \N$ and $\{x_i\}_{i \in I} \subset \G$  is a set of vectors.
From our definitions, we have
\[
\bV=\pm\bigwedge_{i \in I_\V}v_i,\quad \bW=\pm \bigwedge_{i \notin I_\V}w_i,\quad
\bU=\pm\bigwedge_{i \notin I_\V}u_i.
\]
Thus, from the BCH formula, the special form of the 
partial derivatives of $\der_{x_i}\Psi_1$ leads us
to the equality 
$J \Psi_1(x)= | \bV \wedge \bW |$ for every $x \in \R^q$.
Let us consider the composition $$\widetilde{\Psi}_1=i_{\G} \circ \Psi_1: \R^q \to \R^q.$$
Since the map $i_{\G}$ is an isometry, the equalities
\begin{equation}
\label{eq:ugjac}
J \widetilde{\Psi}_1(x) =J \Psi_1(x)= | \bV \wedge \bW |
\end{equation}
hold for every $x \in \R^q$, where the Jacobians are understood with respect to the corresponding metrics. 
We denote by $\nabla\widetilde{\Psi}_1(x)$ the Jacobian matrix of $\widetilde \Psi_1$ with respect to the fixed scalar product 
of $\G$ for $x\in \R^q$.  
From the BCH formula \eqref{eq:BCH}, we get 
\begin{align*}
&\Psi_1(x_1, \dots, x_q)= \bigg(\sum_{i \notin I_{\V}} x_i w_i \bigg) \bigg( \sum_{i \in I_{\V}} x_i v_i \bigg)\\
&= \sum_{i \notin I_{\V}} x_i w_i +  \sum_{i \in I_{\V}} x_i v_i + \sum_{s=2}^\iota \sum_{j=h_{s-1}+1}^{h_s} Q_j\left(\sum_{i \notin I_{\V}, \ i \leq h_{s-1}} x_i w_i, \sum_{i \in I_{\V}, \ i \leq h_{s-1}} x_i v_i\right)v_j\\
&=\sum_{i \notin I_{\V}, i \leq h_1} x_i w_i +  \sum_{i \in I_{\V}, \ i \leq h_1} x_i v_i  \\
& \ + \sum_{i \notin I_{\V}, h_1+1 \leq i \leq h_2} x_i w_i +  \sum_{i \in I_{\V}, \ h_1+1 \leq i \leq h_2} x_i v_i + \sum_{j=h_{1}+1}^{h_2} Q_j\left(\sum_{i \notin I_{\V}, \ i \leq h_{1}} x_i w_i, \sum_{i \in I_{\V}, \ i \leq h_{1}} x_i v_i\right)v_j\\
& \ + \sum_{i \notin I_{\V}, h_2+1 \leq i \leq h_3} x_i w_i +  \sum_{i \in I_{\V}, \ h_2+1 \leq i \leq h_3} x_i v_i + \sum_{j=h_{2}+1}^{h_3} Q_j\left(\sum_{i \notin I_{\V}, \ i \leq h_{2}} x_i w_i, \sum_{i \in I_{\V}, \ i \leq h_{2}} x_i v_i\right)v_j\\
& \ + \dots   \\
& \ +\sum_{i \notin I_{\V}, h_{\iota-1}+1 \leq i \leq h_\iota} x_i w_i +  \sum_{i \in I_{\V}, \ h_{\iota-1}+1 \leq i \leq h_\iota} x_i v_i + \sum_{j=h_{\iota-1}+1}^{h_\iota} Q_j\left(\sum_{i \notin I_{\V}, \ i \leq h_{\iota-1}} x_i w_i, \sum_{i \in I_{\V}, \ i \leq h_{\iota-1}} x_i v_i\right)v_j.
\end{align*}
For every $k=1, \dots, \iota$ and for every integer $i$ with $ h_{k-1} + \ell_k +1 \leq i \leq h_{k}$, we consider
$$
w_i= \sum_{r \in \{ h_{k-1} +1, \dots, h_{k} \} } f_r^i v_r
$$ 
for some coefficients $f_r^i \in \R$. Thus we continue from the above expression of $\Psi_1$, getting
\begin{align*}
\Psi_1(x_1, \dots, x_q)=  & \sum_{i \notin I_{\V}, i \leq h_1} x_i \Big( \sum_{r \in \{1, \dots, h_{1} \} } f_r^i v_r\Big) +  \sum_{i \in I_{\V}, \ i \leq h_1} x_i v_i\\ 
&+\sum_{s=2}^{\iota} \Bigg( \sum_{i \notin I_{\V}, h_{s-1}+1 \leq i \leq h_s} x_i\Big( \sum_{r \in \{h_{s-1}+1, \dots, h_{s} \} } f_r^i v_r\Big) +  \sum_{i \in I_{\V}, \ h_{s-1}+1 \leq i \leq h_s} x_i v_i  \\
& +\sum_{j=h_{s-1}+1}^{h_s} Q_j\left(\sum_{i \notin I_{\V}, \ i \leq h_{s-1}} x_i w_i, \sum_{i \in I_{\V}, \ i \leq h_{s-1}} x_i v_i\right) v_j \Bigg)
\end{align*}
and we can reorganize the previous equation as follows
\begin{align*}
\Psi_1(x_1, \dots, x_q)=& \sum_{r \notin I_{\V}, \ r \leq h_1}  \Big(\sum_{i \notin I_{\V}, i \leq h_1} x_i f_r^i \Big)v_r +  \sum_{r \in I_{\V}, \ r \leq h_1} \Big( x_r + \sum_{i \notin I_{\V}, \ i \leq h_1} x_i f_r^i \Big) v_r  \\
& +  \sum_{s=2}^{\iota} \Bigg(\sum_{r \notin I_{\V}, h_{s-1}+1 \leq  r \leq h_s} \Bigg(\sum_{i \notin I_{\V},  h_{s-1}+1 \leq  i \leq h_s} x_i f_r^i \\
&\qquad \qquad \qquad \qquad + Q_r\left(\sum_{i \notin I_{\V}, \ i \leq h_{s-1}} x_i w_i, \sum_{i \in I_{\V}, \ i \leq h_{s-1}} x_i v_i\right)    \Bigg) v_r \stepcounter{equation}\tag{\theequation}\label{reorg2}\\
& + \sum_{r \in I_{\V}, h_{s-1}+1 \leq  r \leq h_s} \Bigg( x_r + \sum_{i \notin I_{\V}, h_{s-1}+1 \leq  i \leq h_s} x_i f_r^i \\
&\qquad \qquad \qquad \qquad +   Q_r\left(\sum_{i \notin I_{\V}, \ i \leq h_{s-1}} x_i w_i, \sum_{i \in I_{\V}, \ i \leq h_{s-1}} x_i v_i\right)   \Bigg) v_r \Bigg).
\end{align*}
For every $s=2, \dots \iota$ and $j=h_{s-1}+1, \dots, h_{s}$, we denote by $\widetilde{Q}_j : \R^q \to \R$ the following map
\begin{align*}
\widetilde{Q}_j(x_1, \dots, x_q)&= Q_j\left( \sum_{i \notin I_{\V}, i \leq h_{s-1}} x_iw_i, \sum_{i \in I_{\V}, i \leq h_{s-1}} x_i v_i\right).
\end{align*}
Now, we rewrite \eqref{reorg2} with respect to the basis $(v_1, \dots, v_q)$, getting
\begin{align*}
\Psi_1(x_1, \dots, x_q)&=\sum_{i=1}^{\ell_1}\left( x_i+ \sum_{j=\ell_1+1}^{h_1} x_j f_i^j\right) v_i + \sum_{i=\ell_1+1}^{h_1} \left(\sum_{j=\ell_1+1}^{h_1}x_jf_i^j\right)v_i\\
&+\sum_{s=2}^\iota  \sum_{i=h_{s-1}+1}^{h_{s-1}+\ell_s} \left(x_i+ \sum_{j=h_{s-1}+\ell_s+1}^{h_s} x_j f^j_i + \widetilde{Q}_i(x) \right)v_i\\
&+\sum_{s=2}^\iota\sum_{i=h_{s-1}+\ell_s+1}^{h_s}\left(\sum_{j=h_{s-1}+\ell_s+1}^{h_s}x_jf^j_i+  \widetilde{Q}_i(x) \right)v_i.
\end{align*}
As a consequence, denoting by $(e_1, \dots, e_q)$ the canonical basis of $\R^q$, we have
\begin{eqnarray*}
&&\widetilde{\Psi}_1(x_1, \dots, x_q)=\sum_{i=1}^{\ell_1}\left( x_i+ \sum_{j=\ell_1+1}^{h_1} x_j f_i^j\right) e_i + \sum_{i=\ell_1+1}^{h_1} \left(\sum_{j=\ell_1+1}^{h_1}x_jf_i^j\right)e_i\\
&&\!\!\!\!\!\!\!\!+\sum_{s=2}^\iota \Bigg( \sum_{i=h_{s-1}+1}^{h_{s-1}+\ell_s} \left(x_i+ \!\!\!\!\sum_{j=h_{s-1}+\ell_s+1}^{h_s} x_j f^j_i + \widetilde{Q}_i(x) \right)e_i
+\!\!\!\!\sum_{i=h_{s-1}+\ell_s+1}^{h_s}\!\!\!\left(\sum_{j=h_{s-1}+\ell_s+1}^{h_s}x_jf^j_i+  \widetilde{Q}_i(x) \right)e_i \bigg).
\end{eqnarray*}
From the form of $\widetilde \Psi_1$, the Jacobian matrix $\nabla \widetilde{\Psi}_1(x)$ is the block lower triangular matrix
\begin{equation}\label{shapeofthematrix}
\nabla \widetilde{\Psi}_1(x)=\begin{bmatrix}
S_{n_1} & 0_{n_1, n_2} & 0_{n_1, n_3} & \dots & 0_{n_1,n_\iota}\\
M_{n_2, n_1}(x) & S_{n_2} & 0_{n_2, n_3} & \dots &  0_{n_2,n_\iota}\\
M_{n_3, n_1}(x) & M_{n_3, n_2}(x) & S_{n_3} & \dots &  0_{n_3,n_\iota}\\
\dots & \dots & \dots& \dots & \dots \\
M_{n_\iota, n_1}(x) & M_{n_\iota, n_2}(x) & \dots & M_{ n_\iota, n_{\iota-1}}(x) &  S_{n_\iota}\\
\end{bmatrix}.
\end{equation}
For $k,j \in \{1, \dots \iota \}$ the symbol $0_{k,j}$ denotes the $k \times j$ zero matrix. The matrices $S_{n_k}$, $k \in \{ 1 , \dots, \iota \}$, are the following $n_k \times n_k$ block upper triangular matrices
$$S_{n_k}= \begin{bmatrix}
1 & \dots & 0  & f_{h_{k-1}+1}^{h_{k-1}+\ell_k+1} & f_{h_{k-1}+1}^{h_{k-1}+\ell_k+2} & \dots & f_{h_{k-1}+1}^{h_k}  \\
\dots & \dots & \dots  & \dots & \dots & \dots & \dots  \\
0 & \dots & 1  & f_{h_{k-1}+i_{k}}^{h_{k-1}+\ell_k+1} & f_{h_{k-1}+\ell_k}^{h_{k-1}+\ell_k+2} & \dots & f_{h_{k-1}+\ell_k}^{h_k}  \\
0 & \dots & 0  & f_{h_{k-1}+\ell_k+1}^{h_{k-1}+\ell_k+1} & f_{h_{k-1}+\ell_k+1}^{h_{k-1}+\ell_k+2} & \dots & f_{h_{k-1}+\ell_k+1}^{h_k}  \\
\dots & \dots & \dots  & \dots & \dots & \dots & \dots  \\
0 & \dots & 0  & f_{h_k}^{h_{k-1}+\ell_k+1} & f_{h_k}^{h_{k-1}+\ell_k+2} & \dots & f_{h_k}^{h_k} \end{bmatrix}=\begin{bmatrix}
\mathbb{I}_{\ell_k} & F_{k} \\
0_{m_k, \ell_k} & R_k\\
\end{bmatrix},
$$
where clearly $R_k$ and $F_k$ are a $m_k \times m_k$ and a $\ell_k \times m_k$ real matrix, respectively. 
The matrices $M_{n_k, n_j}(x)$ for $k, j \in \{1, \dots \iota \}$ depend on the point $x=(x_1, \dots x_q)$ 
$$ M_{n_k, n_j}(x)= \begin{bmatrix}
\frac{\partial{\widetilde{Q}_{h_{k-1}+1}}}{\partial x_{h_{j-1}+1}}(x)  & \dots & \frac{\partial{\widetilde{Q}_{h_{k-1}+1}}}{\partial x_{h_j}}(x)  \\
\dots & \dots & \dots\\
 \frac{\partial{\widetilde{Q}_{h_k}}}{\partial x_{h_{j-1}+1}}(x) & \dots & \frac{\partial{\widetilde{Q}_{h_k}}}{\partial x_{h_j}}(x) \\
\end{bmatrix}.$$
We define another map $\Psi_2:\mathbb{R}^{q}\to \G$,
\beq\label{eq:Psi2}
\Psi_2(x_1, \dots , x_{q})=	\bigg(\sum_{i \notin I_{\V}} x_i u_i \bigg) \bigg( \sum_{i \in I_{\V}} x_i v_i \bigg).
\eeq
Again, by direct verification one can check that $J \Psi_2(x)= | \bV \wedge \bU |$ for every $x \in \R^q$. We set $$ \widetilde{\Psi}_2= i_{\G} \circ \Psi_2: \R^q \to \R^q,$$ hence arguing as in the case of $\widetilde{\Psi}_1$, we get
\begin{equation}\label{eq:ugjac2}
J \widetilde{\Psi}_2(x)= J \Psi_2(x)= | \bV \wedge \bU |
\end{equation}
for every $x \in \R^q$. We notice that replacing $u_i$ with $w_i$ in \eqref{eq:Psi2} we get exactly the expression of $\Psi_1$ in \eqref{eq:Psi1}. 
Then the form of the Jacobian matrix $\nabla \widetilde{\Psi}_2(x)$ at any point $x \in \R^q$ is the same as in \eqref{shapeofthematrix}.
More precisely, only the block matrices 
$F_k$, $R_k$, $M_{n_i,n_j}$, for $i,j,k \in \{1, \dots, \iota \}$, change. 
Taking into account that
$\widetilde{\Psi}_1^{-1}\circ \widetilde{\Psi}_2=\Psi_1^{-1} \circ \Psi_2$, we get the equalities
\begin{equation}\label{inversa}
	\begin{split}
	\nabla(\Psi_1^{-1} \circ \Psi_2)(x)=\nabla( \widetilde{\Psi}_1^{-1} \circ \widetilde{\Psi}_2)(x)&= \nabla \widetilde{\Psi}_1^{-1}(\widetilde{\Psi}_2(x))\nabla \widetilde{\Psi}_2(x)\\
	&=(\nabla\widetilde{\Psi}_1(\widetilde{\Psi}_1^{-1}( \widetilde{\Psi}_2(x)))^{-1} \nabla \widetilde{\Psi}_2(x)
	\end{split}
\end{equation}
for every $x \in \R^q$.
Thus, combining \eqref{eq:ugjac} and \eqref{eq:ugjac2}, the equalities
\begin{equation}\label{eq:jacmet}
\begin{split}
J( \Psi_1^{-1} \circ \Psi_2)(x)&= J( \widetilde{\Psi}_1^{-1} \circ \widetilde{\Psi}_2)(x)=(J \widetilde{\Psi}_1((\widetilde{\Psi}_1^{-1}\circ \widetilde{\Psi}_2)(x)))^{-1} J \widetilde{\Psi}_2(x)\\
&=(J \Psi_1((\Psi_1^{-1} \circ \Psi_2)(x)))^{-1} J \Psi_2(x)=\frac{| \bV \wedge \bU |}{| \bV \wedge \bW| }
\end{split}
\end{equation}
hold for every $x \in \R^q$.
It is easy to observe that all real and invertible $q \times q$ block lower triangular matrices whose diagonal blocks are block upper triangular matrices constitute a Lie subgroup of $GL(q,\R)$. For this reason, \eqref{inversa} and \eqref{shapeofthematrix} ensure that $\nabla( \Psi_1^{-1} \circ \Psi_2)(x)$ is the $q \times q$ block lower triangular matrix
\begin{equation}\label{eq:matrix}
\nabla ( \Psi_1^{-1} \circ \Psi_2)(x)=\begin{bmatrix}
\widehat{S}_{n_1} & 0_{n_1, n_2} & 0_{n_1, n_3} & \dots & 0_{n_1,n_\iota}\\
\widehat{M}_{n_2, n_1}(x) & \widehat{S}_{n_2} & 0_{n_2, n_3} & \dots &  0_{n_2,n_\iota}\\
\widehat{M}_{n_3, n_1}(x) & \widehat{M}_{n_3, n_2}(x) & \widehat{S}_{n_3} & \dots &  0_{n_3,n_\iota}\\
\dots & \dots & \dots& \dots & \dots \\
\widehat{M}_{n_\iota, n_1}(x) & \widehat{M}_{n_\iota, n_2}(x) & \dots & \widehat{M}_{ n_\iota, n_{\iota-1}}(x) &  \widehat{S}_{n_\iota}\\
\end{bmatrix},
\end{equation}
where $\widehat{M}_{n_j, n_k}(x)$ are real $n_j\times n_k$ matrices and $\widehat{S}_{n_k}$ are $n_k\times n_k$ matrices of the form
\begin{equation}\label{eq:sk}
\widehat{S}_{n_k}=\begin{bmatrix}
\mathbb{I}_{\ell_k} & \widehat{F}_{k} \\
0_{m_k, \ell_k} & \widehat{R}_k\\
\end{bmatrix},
\end{equation}
with $j, k \in \{ 1 , \dots, \iota \}$.
We have denoted by $\widehat{R}_k$ and $\widehat{F}_k$ an $m_k \times m_k$ matrix and an $\ell_k \times m_k$ matrix, respectively. 
Since \eqref{eq:matrix} is block lower triangular, its determinant has the form
$$ J( \Psi_1^{-1} \circ \Psi_2)(x)=|\det (\nabla( \Psi_1^{-1} \circ \Psi_2)(x))|=| \det(\widehat{S}_1) \det(\widehat{S}_2) \cdots \det( \widehat{S}_{\iota})|.$$
Notice that it is independent of $x$.
On the other side, since the matrices $\widehat{S}_{n_k}$ block upper triangular in \eqref{eq:sk}, we obtain the equality
\begin{equation}\label{eq:jacprodR}
J( \Psi_1^{-1} \circ \Psi_2)(x)=|\det(\widehat{R}_1) \det(\widehat{R}_2) \cdots \det( \widehat{R}_{\iota})|.
\end{equation}
We introduce the embedding $E:\mathbb{R}^{q-p} \to \mathbb{R}^{q}$ as the linear map associated with the $q \times (q-p)$ matrix
\begin{equation*}
\begin{bmatrix}
0_{\ell_1, m_1}  & 0_{\ell_1, m_2} & \cdots & \cdots  & 0_{\ell_1, m_{\iota}}\\
\mathbb{I}_{m_1}  & 0_{m_1, m_2}  & \cdots & \cdots  & 0_{m_1, m_{\iota}}\\
0_{\ell_2, m_1}  & 0_{\ell_2, m_2}  & \cdots & \cdots  & 0_{\ell_2, m_{\iota}}\\
0_{m_2, m_1}  & \mathbb{I}_{m_2}  & \cdots & \cdots  & 0_{m_2, m_{\iota}}\\
\cdots & \cdots  & \cdots & \cdots  & \cdots\\
\cdots & \cdots & \cdots & \cdots  & \cdots\\
0_{\ell_\iota, m_1}  & 0_{\ell_\iota, m_2}  & \cdots & \cdots  & 0_{\ell_\iota, m_{\iota}}\\
0_{m_\iota, m_1}  & 0_{m_\iota, m_2}& \cdots & \cdots  & \mathbb{I}_{ m_{\iota}}\\
\end{bmatrix}.
\end{equation*}
Taking into account \eqref{eq:Psi2} and the form of the previous matrix, it is easy to realize the identity 
\beq\label{eq:DecIU}
u=\Psi_2\circ E\circ i_\U(u)
\eeq
for every $u\in\U$. We also define the projection $G: \mathbb{R}^{q} \to \mathbb{R}^{q-p}$ as the linear map associated with the following $(q-p) \times q$ matrix
\begin{equation*}
\begin{bmatrix}
0_{m_1, \ell_1} & \mathbb{I}_{m_1} & 0_{m_1, \ell_2} & 0_{m_1, m_2} & \cdots & \cdots & 0_{m_1, \ell_\iota} & 0_{m_1, m_{\iota}}\\
0_{m_2, \ell_1} & 0_{m_2, m_1} & 0_{m_2, \ell_2} & \mathbb{I}_{m_2} & \cdots & \cdots  & 0_{m_2, \ell_\iota} & 0_{m_2, m_{\iota}}\\
\cdots & \cdots    & \cdots  & \cdots & \cdots & \cdots & \cdots  & \cdots \\
\cdots & \cdots    & \cdots  & \cdots & \cdots & \cdots & \cdots  & \cdots \\
0_{m_\iota, \ell_1} & 0_{m_\iota, m_1} & 0_{m_\iota, \ell_2} & 0_{m_\iota, m_2}  & \cdots & \cdots & 0_{m_\iota, \ell_\iota} & \mathbb{I}_{ m_{\iota}}\\
\end{bmatrix}.
\end{equation*}
Notice that $G \circ E (x)=x$ for every $x \in \R^{q-p}$.
Due to \eqref{eq:Psi1}, the form of the previous matrix yields
\[
i_\W^{-1}\circ G\circ \Psi_1^{-1}(z)=\pi_\W(z)
\]
for every $z\in\G$.
By \eqref{eq:DecIU}, if we take any $u \in \U$ we get
\begin{align*}
\pi_\W(u)&=i_\W^{-1}\circ G\circ \Psi_1^{-1}(u) \\
&=i_\W^{-1}\circ G\circ \Psi_1^{-1}\circ\Psi_2\circ E\circ i_\U(u) \stepcounter{equation}\tag{\theequation}\label{eq:decpiW}\\
&=i_\W^{-1}\circ G\circ \widetilde{\Psi}_1^{-1}\circ \widetilde{\Psi}_2\circ E\circ i_\U(u)=\pi^{\W,\V}_{\W,\U}(u).
\end{align*}
Now, by the definitions of $E$ and $G$, it is not difficult to verify that for every $x \in \R^{q-p}$ the Jacobian matrix
$\nabla (G \circ \Psi_1^{-1} \circ \Psi_2 \circ E)(x)$ is the $(q-p) \times (q-p)$ matrix that we obtain by deleting the rows and columns of indexes belonging to $I_{\V}$ from the matrix $\nabla(\Psi_1^{-1} \circ \Psi_2)(E(x))$, hence
\begin{equation*}
 \nabla (G \circ \Psi_1^{-1} \circ \Psi_2 \circ E)(x)=\begin{bmatrix}
\widehat{R}_1 & 0_{m_1, n_2} & 0_{m_1, n_3} & \dots & \dots & 0_{m_1,n_\iota}\\
\widehat{N}_{m_2, n_1}(E(x)) & \widehat{R}_2 & 0_{m_2, n_3} & \dots & \dots &  0_{m_2,n_\iota}\\
\dots & \dots & \dots& \dots &\dots & \dots \\
\dots & \dots & \dots& \dots &\dots & \dots \\
\widehat{N}_{m_\iota, n_1}(E(x)) & \widehat{N}_{m_\iota, n_2}(E(x)) & \dots & \dots & \widehat{N}_{ m_\iota, n_{\iota-1}}(E(x)) &  \widehat{R}_{\iota}\\
\end{bmatrix},
\end{equation*}
where $\widehat{N}_{i,j}(E(x))$ are suitable real $i \times j$ matrix.
Thus, independently of the point $x \in \R^{q-p}$, by \eqref{eq:jacprodR} and \eqref{eq:jacmet} we obtain
\beq\label{eq:formula}
\begin{split}
J(G\circ \Psi_1^{-1}\circ \Psi_2\circ E )(x)&=|\det(\widehat{R}_1) \det(\widehat{R}_2) \cdots \det( \widehat{R}_{\iota})|\\
&=J(\Psi_1^{-1}\circ\Psi_2)(E(x))=\frac{| \bV \wedge \bU |}{| \bV \wedge \bW |}.
\end{split}
\eeq
Finally, as a consequence of \eqref{eq:decpiW} and \eqref{eq:formula}
for every Borel set $B \subset \U$ we get
\begin{align*}
\mathcal{H}^{q-p}_{|\cdot|}(B) &= \mathcal{L}^{q-p}(i_{\U}(B))
= \frac{| \bV  \wedge \bW |}{| \bV \wedge \bU |}  \mathcal{L}^{q-p}(G (\Psi_1^{-1}(\Psi_2(E(i_{\U}(B)))))\\
&= \frac{| \bV  \wedge \bW |}{| \bV \wedge \bU |} \mathcal{H}^{q-p}_{|\cdot|}(i_{\W}^{-1}(G(\Psi_1^{-1}(\Psi_2(E(i_{\U}(B)))))= \frac{| \bV  \wedge \bW |}{| \bV \wedge \bU |} \mathcal{H}^{q-p}_{|\cdot|}(\pi_{\W, \U}^{\W, \V}(B)),
\end{align*}
concluding the proof.
\end{proof}

\section{Area formula for intrinsically differentiable graphs}

In this section we prove the area formula for intrinsic graphs 
arising from continuously intrinsically differentiable maps.
The central tool is the ``upper blow-up'', established in Theorem~\ref{theorem:ubu}, which represents the main result of the paper.

\subsection{Intrinsic Jacobian}
We start by studying the notion of intrinsic Jacobian for graphs of intrinsically differentiable maps. 

\begin{deff}
Let $(\W,\V)$ be a couple of complementary subgroups of $\G$. Let $n$ be the topological dimension of $\W$. Let $A \subset \W$ be an open set and let $\bar{w} \in A$. We consider a map $\phi:A  \to \V$ and assume that $\phi$ is intrinsically differentiable at $\bar{w}$. 
For the graph map of the intrinsic differential $d\phi_{\bar w}$, we use the notation 
$$
G(d\phi_{\bar{w}}):\W \to \G, \ G(d\phi_{\bar{w}})(w)=w d\phi_{\bar{w}}(w)
$$
for all $w\in\W$. Then we introduce the \textit{intrinsic Jacobian} of the graph map $\Phi:A\to\G$ at $\bar{w}$ as 
\begin{equation}\label{def:JacPhi}
J\Phi(\bar{w})=\frac{\mathcal{H}_{|\cdot|}^n(G(d\phi_{\bar{w}})(B))}{\mathcal{H}^n_{|\cdot|}(B)},
\end{equation}
where $\Phi:A \to \G$, $\Phi(w)=w\phi(w)$ for all $w\in A$ and $B \subset \W$ is a Borel set of positive measure.
\end{deff}

The next proposition, in particular, proves that the Jacobian defined in \eqref{def:JacPhi} does not depend on the choice of the 
Borel set $B\subset\W$.

\begin{prop}\label{propjac}
Let $(\W,\V)$ be a couple of complementary subgroups of $\G$. Let $A \subset \W$ be an open set, $\bar{w} \in A$ and let $p$ be the topological dimension of $\V$. 
Let $\Phi:A \to \G$ be the graph map of $\phi:A\to \V$. Assume that $\phi$ is intrinsically differentiable at $\bar{w}$ and set \begin{equation}\label{Uw}\U_{\bar{w}}=\mathrm{graph}(d\phi_{\bar{w}}).
\end{equation} 
If $\bV$ is an orienting unit $p$-vector of $\V$ and $\bW$, $\bU_{\bar{w}}$ are orienting unit $(q-p)$-vectors of $\W$ and $\U_{\bar{w}}$, respectively, then
$$J \Phi(\bar{w})=\frac{|\bV \wedge \bW|}{|\bV \wedge \bU_{\bar{w}}|}.$$
\end{prop}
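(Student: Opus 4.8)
The plan is to recognize the graph map $G(d\phi_{\bar w})$ of the intrinsic differential as the restriction of a group projection, and then to read off the statement from the algebraic Lemma~\ref{lm:MVPsubgroups}. First I would record the structural facts. Since $d\phi_{\bar w}$ is intrinsically linear, $\U_{\bar w}=\mathrm{graph}(d\phi_{\bar w})$ is a homogeneous subgroup, and by Proposition~\ref{P7}(i) the pair $(\U_{\bar w},\V)$ is a couple of complementary subgroups of $\G$. Since $(w,v)\mapsto wv$ is a homeomorphism $\W\times\V\to\G$ we have $q=n+p$, and applying the same to $\mathrm{graph}(d\phi_{\bar w})$ gives $\dim\U_{\bar w}=n=q-p$. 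In particular $\V\cap\W=\{0\}$ and $\V\cap\U_{\bar w}=\{0\}$, so $\bV\wedge\bW\neq0$ and $\bV\wedge\bU_{\bar w}\neq0$ and the claimed right-hand side is a well-defined positive number.

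The crucial point is the identity $G(d\phi_{\bar w})=\pi^{\U_{\bar w},\V}_{\U_{\bar w},\W}$, where the right-hand side is the restricted group projection of Definition~\ref{d:projWVM}. Indeed, for $w\in\W$ the factorization $w=\bigl(w\,d\phi_{\bar w}(w)\bigr)\bigl(d\phi_{\bar w}(w)\bigr)^{-1}$ has its first factor in $\U_{\bar w}$ and its second in $\V$, so by uniqueness of the splitting $\pi^{\U_{\bar w},\V}_{\U_{\bar w}}(w)=w\,d\phi_{\bar w}(w)=G(d\phi_{\bar w})(w)$. By \eqref{eq:invproj} the inverse of this map is $\pi^{\W,\V}_{\W,\U_{\bar w}}:\U_{\bar w}\to\W$; since both maps are continuous, $G(d\phi_{\bar w})$ is a homeomorphism of $\W$ onto $\U_{\bar w}$ and therefore carries Borel sets to Borel sets.

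Now I would apply Lemma~\ref{lm:MVPsubgroups} with the subgroups $\V$, $\U:=\U_{\bar w}$, $\W$ (admissible since $\dim\U_{\bar w}=q-p$ and both $(\W,\V)$ and $(\U_{\bar w},\V)$ are complementary). It yields, for every Borel set $\widetilde B\subset\U_{\bar w}$, the equality $\mathcal{H}^{q-p}_{|\cdot|}\bigl(\pi^{\W,\V}_{\W,\U_{\bar w}}(\widetilde B)\bigr)=\frac{|\bV\wedge\bU_{\bar w}|}{|\bV\wedge\bW|}\,\mathcal{H}^{q-p}_{|\cdot|}(\widetilde B)$. Taking $\widetilde B=G(d\phi_{\bar w})(B)$ for a Borel set $B\subset\W$ and using $\pi^{\W,\V}_{\W,\U_{\bar w}}\circ G(d\phi_{\bar w})=\mathrm{Id}_{\W}$ together with $n=q-p$ gives $\mathcal{H}^n_{|\cdot|}\bigl(G(d\phi_{\bar w})(B)\bigr)=\frac{|\bV\wedge\bW|}{|\bV\wedge\bU_{\bar w}|}\,\mathcal{H}^n_{|\cdot|}(B)$. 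For $B$ of positive finite measure this is exactly the value of the quotient in \eqref{def:JacPhi}, and the displayed relation also shows that this quotient does not depend on the choice of $B$, which proves the proposition.

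The substantive work is entirely hidden in Lemma~\ref{lm:MVPsubgroups}, so within this proposition the only delicate point I expect is the bookkeeping: identifying $G(d\phi_{\bar w})$ with the correct restricted projection, keeping straight which subgroup plays the role of ``$\U$'' in the lemma, and using \eqref{eq:invproj} to pass between $G(d\phi_{\bar w})$ and $\pi^{\W,\V}_{\W,\U_{\bar w}}$. No analytic obstacle arises beyond this.
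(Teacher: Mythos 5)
Your proposal is correct and follows essentially the same route as the paper: identify $G(d\phi_{\bar w})$ with the restricted projection $\pi^{\U_{\bar w},\V}_{\U_{\bar w},\W}$ via the factorization $w=\bigl(w\,d\phi_{\bar w}(w)\bigr)\bigl(d\phi_{\bar w}(w)\bigr)^{-1}$, then invoke Lemma~\ref{lm:MVPsubgroups} together with \eqref{eq:invproj}. Your version is in fact slightly more explicit than the paper's, since you spell out why the ratio from the lemma (stated for Borel subsets of $\U_{\bar w}$) gets inverted when applied to $\widetilde B=G(d\phi_{\bar w})(B)$, a step the paper compresses into a single chain of equalities.
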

\begin{proof}
By Proposition~\ref{P7} (i), we know that $(\U_{\bar{w}},\V)$ is a couple of complementary subgroups of $\G$.
We wish to prove that
\begin{equation}
\label{eq:dpr}
G(d\phi_{\bar{w}})(w)=\pi_{\U_{\bar{w}}, \W}^{\U_{\bar{w}}, \V}(w)
\end{equation}
for every $w\in\W$, where $G(d\phi_{\bar{w}})(w)=w d \phi_{\bar{w}}(w)$. It suffices to notice that 
$$w= (G(d\phi_{\bar{w}})(w)) ( d\phi_{\bar{w}}(w))^{-1},$$
with $w\in\W$, $G(d\phi_{\bar{w}})(w) \in \U_{\bar{w}}$ and $(d\phi_{\bar{w}}(w))^{-1} \in \V$, hence \eqref{eq:dpr} is proved.
By combining the definition of intrinsic Jacobian and of Lemma~\ref{lm:MVPsubgroups}, we get
\begin{equation}
J \Phi(\bar{w})=\frac{\mathcal{H}_{|\cdot|}^n(G(d\phi_{\bar{w}})(B))}{\mathcal{H}^n_{|\cdot|}(B)}=\frac{\mathcal{H}_{|\cdot|}^n(\pi_{\U_{\bar{w}}, \W}^{\U_{\bar{w}}, \V}(B))}{\mathcal{H}^n_{|\cdot|}(B)}= \frac{|\bV \wedge \bW|}{|\bV \wedge \bU_{\bar{w}}|}
\end{equation}
for every Borel set $B \subset \W$.
\end{proof}

\begin{prop}\label{cont-jac}
Let $(\W,\V)$ be a couple of complementary subgroups of $\G$. Let $A \subset \W$ be an open set and consider $\phi:A  \to \V$. Assume that $\phi$ is intrinsically differentiable at any point of $A$ with continuous intrinsic differential $d\phi:A \to \IL(\W,\V)$. Then $w \to J\Phi(w)$ is continuous on $A$.
\end{prop}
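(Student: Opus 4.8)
The plan is to use the explicit representation of the intrinsic Jacobian obtained in Proposition~\ref{propjac}, namely
$$
J\Phi(w)=\frac{|\bV\wedge\bW|}{|\bV\wedge\bU_w|},
$$
where $\U_w=\graph(d\phi_w)$, and to reduce the continuity of $w\mapsto J\Phi(w)$ to the continuity of the map $w\mapsto\bU_w$ (up to sign) from $A$ into the unit sphere of $\Lambda_{q-p}\G$. Since $\bV$ and $\bW$ are fixed and the wedge product $\Lambda_{q-p}\G\times\Lambda_p\G\to\Lambda_q\G$ together with the norm $|\cdot|$ on multivectors are continuous, once we know $w\mapsto\bU_w$ is continuous it follows immediately that $w\mapsto|\bV\wedge\bU_w|$ is continuous, and it is also bounded below by a positive constant locally (indeed $(\U_w,\V)$ is always a couple of complementary subgroups by Proposition~\ref{P7}, so $\bV\wedge\bU_w\neq0$); hence the quotient is continuous.

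Thus the real content is: if $d\phi:A\to\IL(\W,\V)$ is continuous with respect to the distance $\ccS_{\W,\V}$ of \eqref{eq:distwv}, then $w\mapsto\U_w=\graph(d\phi_w)$ varies continuously, and consequently so does a unit orienting $(q-p)$-vector $\bU_w$, chosen with a consistent sign. First I would fix $w_0\in A$ and describe $\U_w$ concretely: using a graded basis of $\W$, the graph map $G(d\phi_w)(\eta)=\eta\,d\phi_w(\eta)$ sends a fixed basis $(\eta_1,\dots,\eta_{q-p})$ of $\W$ to vectors $\eta_i\,d\phi_w(\eta_i)\in\G$ that span $\U_w$. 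The orienting unit $(q-p)$-vector is then $\bU_w=\pm\dfrac{(\eta_1 d\phi_w(\eta_1))\wedge\cdots\wedge(\eta_{q-p}d\phi_w(\eta_{q-p}))}{|(\eta_1 d\phi_w(\eta_1))\wedge\cdots\wedge(\eta_{q-p}d\phi_w(\eta_{q-p}))|}$, and the numerator depends continuously on $w$ as soon as each $w\mapsto d\phi_w(\eta_i)\in\V$ does. Convergence in the distance $\ccS_{\W,\V}$ gives $d\phi_w(\eta_i)\to d\phi_{w_0}(\eta_i)$ for every fixed $\eta_i$ (taking $\|\eta_i\|$ into account via homogeneity, or simply evaluating on the unit-norm rescalings), because $\ccS_{\W,\V}(d\phi_w,d\phi_{w_0})\ge d(d\phi_w(\eta),d\phi_{w_0}(\eta))$ for $\|\eta\|=1$ and the group operation, the dilations and $d$ are all continuous. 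Since the wedge of the $q-p$ vectors $\eta_i d\phi_{w_0}(\eta_i)$ is nonzero (they span the $(q-p)$-dimensional subgroup $\U_{w_0}$), the denominator stays bounded away from $0$ near $w_0$, so the normalized $(q-p)$-vector also converges; by multiplying by an appropriate sign we get $\bU_w\to\bU_{w_0}$, and then $|\bV\wedge\bU_w|\to|\bV\wedge\bU_{w_0}|>0$, whence $J\Phi(w)\to J\Phi(w_0)$.

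I expect the main obstacle to be the careful passage from continuity of $d\phi$ in the metric $\ccS_{\W,\V}$ to continuity of the $(q-p)$-vectors $\bU_w$ in $\Lambda_{q-p}\G$: one must check that convergence of the intrinsically linear maps on the unit sphere of $\W$ (which is what $\ccS_{\W,\V}$ controls) really does yield convergence of $d\phi_w(\eta_i)$ for a fixed basis, and then that the sign ambiguity in $\bU_w$ can be resolved consistently on a neighbourhood of $w_0$ — this is where the nonvanishing of $\bV\wedge\bU_{w_0}$, equivalently the fact that $(\U_{w_0},\V)$ is a complementary couple, is used to keep the relevant determinant away from zero so that the sign does not flip. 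Everything else is routine continuity of algebraic operations (group law, wedge, norm, division by a locally nonzero quantity).
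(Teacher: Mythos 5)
Your overall strategy coincides with the paper's: both start from the representation $J\Phi(w)=|\bV\wedge\bW|/|\bV\wedge\bU_w|$ of Proposition~\ref{propjac}, both reduce the problem to the continuity of a unit orienting $(q-p)$-vector of $\U_w=\graph(d\phi_w)$ built from the images $G(d\phi_w)(w_i)=w_i\,d\phi_w(w_i)$ of a fixed graded basis of $\W$, and both derive the continuity of $w\mapsto d\phi_w(w_i)$ from $\ccS_{\W,\V}$-continuity via homogeneity (the estimate $\|d\phi_w(\eta)^{-1}d\phi_{w'}(\eta)\|\le\|\eta\|\,\ccS_{\W,\V}(d\phi_w,d\phi_{w'})$ is exactly the one the paper uses). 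The sign issue you single out as the main obstacle is in fact harmless: only $|\bV\wedge\bU_w|$ enters the formula, and this quantity is insensitive to replacing $\bU_w$ by $-\bU_w$, so no consistent choice of orientation is needed.

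The genuine gap is the step you dispose of in a parenthesis: ``the wedge of the $q-p$ vectors $\eta_i\,d\phi_{w_0}(\eta_i)$ is nonzero (they span the $(q-p)$-dimensional subgroup $\U_{w_0}$)''. These vectors certainly lie in $\U_{w_0}$ and there are $q-p$ of them, but the graph map $\eta\mapsto\eta\,d\phi_{w_0}(\eta)$ is \emph{not} linear --- it is a group product carrying BCH corrections, and $d\phi_{w_0}$ is only intrinsically linear --- so a nonlinear bijection of this kind need not carry a basis of $\W$ to a linearly independent family; ``they span'' is precisely the assertion to be proved, not a justification for it. The paper identifies this as the central point of the proof and establishes it by computing, via the BCH formula and the grading, the \emph{linear} projection $P_\W$ along $\V$ of each $G(d\phi_n)(w_i)$: for $w_i\in V_k$ one finds $P_\W(G(d\phi_n)(w_i))=w_i+(\text{terms in }\W\cap V_s \text{ with } s>k)$, so the matrix of $P_\W\circ G(d\phi_n)$ on the graded basis is unitriangular, the wedge of the projections collapses to $\bigwedge_i w_i\neq 0$, and linear independence follows. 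Without this (or an equivalent) argument your proof has a hole exactly at the point that keeps the normalizing denominator away from zero and makes the unit multivector well defined and continuous.
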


\begin{proof}
Let us consider $n \in A$ and set $\U_n=\mathrm{graph}(d \phi_n)$.
Let $p$ denote the topological dimension of $\V$. By Proposition~\ref{propjac} we have 
\begin{equation}\label{eq:formajac}
J\Phi(n)= \frac{|\bV \wedge \bW|}{|\bV \wedge \bU_{n}|},
\end{equation} where $\bV$ is an orienting unit $p$-vector of $\V$ and $\bW$, $\bU_n$ are orienting unit $(q-p)$-vectors of $\W$ and $\U_n$, respectively.
We set for $s=1, \dots, \iota$, $\ell_s=\mathrm{dim}(\V \cap V_s)$ and we introduce a graded basis
\begin{equation}\label{eq:base}
\begin{split}
(v_1, \ldots, v_{\ell_1},w_{\ell_1+1}, \ldots, w_{h_1}&,
 v_{h_1+1}, \ldots, v_{h_1+\ell_2}, w_{h_1+\ell_2+1}, \ldots w_{h_2}, \ldots,\\
& \ldots,  v_{h_{\iota-1}+1}, \ldots, v_{h_{\iota-1}+\ell_{\iota}}, w_{h_{\iota-1}+\ell_{\iota}+1}, \ldots, w_{h_{\iota}})
\end{split}
\end{equation}
of $\G$ such that
for every $s=1, \dots, \iota$
$$\set{v_{h_{s-1}+1}, \dots, v_{h_{s-1}+\ell_{s}}} \subset \V \cap V_{s} \ \ \ \ \  \set{w_{h_{s-1}+\ell_s+1}, \dots, w_{h_s}} \subset \W \cap V_s.$$
We introduce the following set of $q-p$ indices $$I_{\W}= \{ j \in \{1, \dots, q \}: \text{there exists} \ w_j\in \W\}.$$
Taking into account \eqref{eq:base}, it is immediate to observe that $$I_{\W}= \{ \ell_1+1, \ldots, h_1, h_1+\ell_2+1, \ldots, h_2, \ldots, h_{\iota-1}+\ell_{\iota}+1, \ldots, h_{\iota} \}.$$
The central point of the proof is to show that
\begin{equation}\label{claim-baseinbase}
\{ G(d\phi_n)(w_{i}) \}_{i \in I_{\W}} \mathrm{ \ is \ a \ basis \ of \ }\U_n.
\end{equation}
If this is true, then there exists $\lambda \in \R$ such that 
\begin{align}\label{eq:Glambda}
\bigwedge_{i \in I_{\W}} G(d\phi_n)(w_{i}) =\lambda  \bU_n.
\end{align}
Notice that we have used the notation 
$$
\bigwedge_{i \in I}x_i=x_{i_1} \wedge \ldots x_{i_m}, \mathrm{\  with \ } i_1, \ldots, i_m \in I \mathrm{ \ and \ }i_1 < i_2 < \ldots < i_m
$$ 
where if $I \subset \N^m$, for some $m \in \N$ and $\{x_i\}_{i \in I} \subset \G$ is a set of vectors.
Due to \eqref{eq:Glambda}, since we are dealing with unit multivectors, we get
\begin{align}
\label{eq:vectcont}
\left|\bV \wedge \frac{\bigwedge_{i \in I_{\W}} G(d\phi_n)(w_{i}) }{\| \bigwedge_{i \in I_{\W}} G(d\phi_n)(w_{i}) \|}\right|=|\bV\wedge \bU_n|.
\end{align}
The continuity of the intrinsic differential implies the continuity of $A \ni n \to G(d\phi_{n})(w)$ for every fixed $w \in \W$. 
Indeed, taking $n,n' \in A$ it follows that
\begin{align*}
\|G(d\phi_{n})(w)^{-1}G(d\phi_{n'})(w)\|&=
\|d\phi_n(w)^{-1}d\phi_{n'}(w)\|\\
&= \| w\| \|d\phi_n(\delta_{\frac{1}{\|w \|}}(w))^{-1}d\phi_{n'}(\delta_{\frac{1}{\|w\|}}(w))\|\\
&\le \|w\|\ccS_{\W,\V}(d\phi_n,d\phi_{n'}),
\end{align*}
where we exploited the homogeneity of intrinsically linear maps and the definition of the distance $\ccS_{\W,\V}$. 
The continuity of $A \ni n \to G(d\phi_{n})(w_i)$ immediately implies the continuity of the scalar function in \eqref{eq:vectcont}
with respect to $n\in A$. 
The form of the Jacobian $J\Phi(n)$ in \eqref{eq:formajac} gives our claim.

Finally, we are left to prove \eqref{claim-baseinbase}. It suffices to prove that $\{ P_{\W}(G(d\phi_n)(w_{i}))\}_{i \in I_{\W} } $ is a basis of $\W$, where the key fact is that $P_{\W}:\G \to \W$ is the linear projection associated with the direct sum $\W \oplus \V$.
Setting $L=d\phi_n$ and $i \in I_{\W} $, we compute  $G(d\phi_n)(w_i)$ exploiting the BCH formula \eqref{eq:BCH} with respect to the basis in \eqref{eq:base}:
\begin{align*}
G(d\phi_n)(w_i)=&w_i L(w_i)=w_i+L(w_i)\\
&+\sum_{s=2}^\iota \Big( \sum_{j=h_{s-1}+\ell_s+1}^{h_s} Q_j\big(\bar{\cp}_{s-1}(w_i),\bar{\cp}_{s-1}(L(w_i))\big)w_j \stepcounter{equation}\tag{\theequation}\label{eq:bch}\\
& \qquad \qquad \qquad \qquad  + \sum_{j=h_{s-1}+1}^{h_{s-1}+\ell_s} Q_j\big(\bar{\cp}_{s-1}(w_i),\bar{\cp}_{s-1}(L(w_i))\big)v_j \Big),
\end{align*}
where the projections $\bar{\cp}_j$ are the projections in \eqref{eq: projections} and $Q_j$ are polynomial on $\G$.
Now, let us observe that if $w_i \in V_k$, for $k=1, \dots , \iota$, then
$$ \bar{\cp}_{s-1}(w_i) =0 $$
for every $s \leq k$, hence
\begin{equation}\label{eq:qj}
Q_j\big(\bar{\cp}_{s-1}(w_i),\bar{\cp}_{s-1}(L(w_i))= Q_j\big(0,\bar{\cp}_{s-1}(L(w_i))=0
\end{equation}
for every $j \in \{h_{s-1}+1, \cdots, h_s \}$ and $2\le s \leq k$.
Thus, by \eqref{eq:bch} and \eqref{eq:qj} we deduce that if $w_i \in V_k$
\begin{equation}\label{eq:projvect}
\begin{split}
P_{\W}(G(d\phi_n)(w_i))&=w_i+\sum_{s=2}^\iota  \sum_{j=h_{s-1}+\ell_s+1}^{h_s} Q_j\big(\bar{\cp}_{s-1}(w_i),\bar{\cp}_{s-1}(L(w_i))\big)w_j\\
&=w_i+\sum_{s=k+1}^\iota  \sum_{j=h_{s-1}+\ell_s+1}^{h_s} Q_j\big(\bar{\cp}_{s-1}(w_i),\bar{\cp}_{s-1}(L(w_i))\big)w_j.
\end{split}
\end{equation}
Notice in particular that if $k=\iota$, namely $w_i \in V_{\iota}$, then $P_{\W}(G(d\phi_n)(w_i))=w_i$,
therefore the form of the projected vectors in \eqref{eq:projvect} gives
\begin{align*}
\bigwedge_{i \in I_{\W}} P_{\W}(G(d\phi_n)(w_{i}))=\bigwedge_{i \in I_{\W}}w_i
\end{align*}
and this concludes the proof of the proposition.
\end{proof}

\subsection{Spherical factor and upper blow-up}
The spherical measure in the area formula needs a renormalization, given by a geometric factor. This number clearly depends on the fixed homogeneous distance $d$ and the Euclidean norm $|\cdot|$ 
arising from the fixed scalar product $\lan\cdot,\cdot\ran$ on $\G$. 

\begin{deff}[Spherical factor]
	\label{def:sphfactor}
	Let $\mathbb{W}\subset\G$ be a linear subspace of topological dimension $n$. We define the \emph{spherical factor of $d$ with respect to 
	$\mathbb{W}$} as the number
	$$ \beta_d (\mathbb{W} )= \max_{z \in \mathbb{B}(0,1)} \mathcal{H}^n_{|\cdot|} ( \mathbb{W} \cap \mathbb{B}(z,1)),$$
	where the Hausdorff measure $\cH^n_{|\cdot|}$ is constructed by the fixed Euclidean norm $|\cdot|$.
\end{deff}

 \begin{teo}[Upper blow-up]\label{theorem:ubu}
Let $(\W,\V)$ be a couple of complementary subgroups of $\G$. Let $m$ and $M$ be the topological and the Hausdorff dimensions of $\W$, respectively. We consider an open set $A \subset \W$ and $\phi:A \to \V$. We also assume that $\phi$ is intrinsically differentiable at any point of $A$ and that $d\phi:A \to \IL(\W,\V)$ is  continuous.
We set $\U_{w}=\mathrm{graph}(d\phi_{w})$ for every $w \in A$. Let $\Phi:A\to\G$ be the graph map of $\phi$ and introduce the following measure
\begin{equation}
\mu(B)= \int_{\Phi^{-1}(B)} J\Phi(w)\  d \mathcal{H}_{|\cdot|}^m (w)
\end{equation}
for every Borel set $B \subset \mathbb{G}$. 
Setting $\Sigma=\Phi(A)$, for every $x=\Phi(\zeta) \in \Sigma$ we have 
$$
\cs^M( \mu, x)= \ \beta_{d}( \U_{\zeta}).
$$
\end{teo}

\begin{proof}
Let us consider $x=\Phi(\zeta) \in \Sigma$ with $\zeta \in A$.
For any $y \in \G$ and $t>0$, we can write
\begin{equation} 
\mu(\mathbb{B}(y,t))= 
 \int_{\Phi^{-1}(\mathbb{B}(y,t))} J \Phi(w) \  d \mathcal{H}_{| \cdot |}^{m} (w).
\end{equation}
We now perform the change of variables
\[
w= \sigma_x(\Lambda_t(\eta)),
\]
where $\Lambda_t= \delta_t|_{\mathbb{W}}$. The Jacobian of $\Lambda_t$ is $ t^M$ and $\sigma_x$ is measure-preserving (\cite[Lemma 2.20]{FranchiSerapioni2016IntrLip}).
We obtain that
\[
\frac{\mu(\mathbb{B}(y,t))}{t^M} =
  \int_{\Lambda_{1/t}(\sigma_x^{-1}(\Phi^{-1}(\mathbb{B}(y,t))))} 
J\Phi(\sigma_x (\Lambda_t(\eta))))  \  d \mathcal{H}^m_{|\cdot|} (\eta).
\]
By the definition of spherical Federer density, we get
\begin{align*}
\cs^M(\mu ,x)& = \inf_{r>0} \sup_{\substack{y \in \mathbb{B}(x,t)\\ 0<t<r}} \frac{\mu (\mathbb{B}(y,t))}{t^{M}} \\
&=  \inf_{r>0} \sup_{\substack{ y \in \mathbb{B}(x,t) \\ 0<t<r}}  \ \int_{\Lambda_{1/t}(\sigma_x^{-1}(\Phi^{-1}(\mathbb{B}(y,t))))} 
J\Phi (\sigma_x(\Lambda_t(\eta)) ) \  d \mathcal{H}^{m}_{|\cdot|} (\eta).
\end{align*}
There exists $R_0>0$ such that for $t>0$ and $y \in \mathbb{B}(x,t)$ 
we have the following inclusion
\begin{eqnarray}\label{insieme}
\Lambda_{1/t}(\sigma_x^{-1}(\Phi^{-1}(\mathbb{B}(y,t)))) \subset \mathbb{B}_{\mathbb{W}}(0,R_0),
\end{eqnarray}
where the translated function $\phi_{x^{-1}}$ is defined according to formula \eqref{eq:translatedf} and
we have set
\[
\mathbb{B}_{\mathbb{W}}(0,R_0)=\mathbb{B}(0,R_0)\cap \mathbb{W}.
\]
To see \eqref{insieme}, we write more explicitly $\Lambda_{1/t}(\sigma_x^{-1}(\Phi^{-1}(\mathbb{B}(y,t))))$,
that is 
\[
\left\{ \eta \in \Lambda_{1/t}(\sigma_x^{-1}(A)) : \left\|y^{-1} \Phi(\sigma_x(\Lambda_t(\eta))) \right\|\le t \right\}. 
\]
It can be written as follows
\[
\left\{ \eta \in \Lambda_{1/t}(\sigma_x^{-1}(A)) : \left\|y^{-1} \sigma_x(\Lambda_t(\eta))\phi(\sigma_x(\Lambda_t(\eta))) \right\|\le t \right\},
\]
hence we can continue getting
\[
\left\{ \eta \in \Lambda_{1/t}(\sigma_x^{-1}(A)) : \|y^{-1} x \Lambda_t(\eta) (\pi_{\V}(x \Lambda_t(\eta))^{-1}\phi(\sigma_x(\Lambda_t(\eta)))\| \leq t \right\}. 
\]
According to \eqref{eq:translatedf}, the translated function of $\phi$ at $x^{-1}$ is 
\[
\phi_{x^{-1}}(\eta)=(\pi_\V(x\eta))^{-1}\phi(\sigma_x(\eta))
\]
Thus, we finally get 
\beq\label{eq:setLambda}
\begin{split}
\Lambda_{1/t}&(\sigma_x^{-1}(\Phi^{-1}(\mathbb{B}(y,t)))) =
\left\{ \eta \in \Lambda_{1/t}(\sigma_x^{-1}(A)) : \|y^{-1} x \Lambda_t(\eta) \phi_{x^{-1}}( \Lambda_t(\eta))\| \leq t \right\}\\
&= \left\{ \eta \in \Lambda_{1/t}(\sigma_x^{-1}(A)) : 
\left\|(\delta_{1/t}(y^{-1}  x ))  \eta \left(\delta_{1/t}(\phi_{x^{-1}}(\Lambda_t \eta))\right)\right\|\le1 \right\},
\end{split}
\eeq
hence for $\eta \in \Lambda_{1/t}(\sigma_x^{-1}(\Phi^{-1}(\mathbb{B}(y,t)))$, taking into account 
the previous equality, we have established that
$$ \eta\left(\delta_{1/t}(\phi_{x^{-1}}(\Lambda_t \eta)) \right) \in \mathbb{B}(0,2).$$
From the estimate \eqref{eqczero}, we know that
$$c_0 \left( \| \eta \| + \left\Vert   \delta_{1/t}(\phi_{x^{-1}}(\Lambda_t\eta)) \right\Vert \right)\leq 
\left\Vert \eta \left(\delta_{1/t}(\phi_{x^{-1}}(\Lambda_t\eta))\right) \right\Vert \leq 2,$$
hence the inclusion \eqref{insieme} holds with $R_0=2/c_0$. As a consequence, we have that $$ \cs^{M}(\mu ,x) < \infty.$$
There exist a positive sequence $ \{t_k \}$ converging to zero and $y_k \in \mathbb{B}(x,t_k)$ such that
\[
 \int_{\Lambda_{1/t_k}(\sigma_x^{-1}(\Phi^{-1}(\mathbb{B}(y_k,t_k))))} J \Phi (\sigma_x(\Lambda_{t_k} (\eta) ) \ d  \mathcal{H}^{m}_{|\cdot|}(\eta)\to \cs^{M}(\mu,x)
\]
as $k\to\infty$. Up to extracting a subsequence, since $y_k \in \mathbb{B}(x,t_k)$ for every $k$, there exists $z \in \mathbb{B}(0,1)$ such that
$$ \lim_{k \to \infty} \delta_{1/t_k}(x^{-1}  y_k) = z.$$
We set
\[
S_z=\pi_{\W,\U_{\zeta}}^{\W,\V}(\U_{\zeta}\cap \mathbb{B}(z,1))\subset\W.
\]
\textbf{Claim 1:} For each $\omega \in \mathbb{W}\sm S_z$, there exists
$$
\lim_{k \to \infty} 1_{\Lambda_{1/t_k}(\sigma_x^{-1}(\Phi^{-1}(\mathbb{B}(y_k,t_k)))} (\omega)=0.
$$
By contradiction, if we had a subsequence of the integers $k$ such that 
\begin{equation*}
(\delta_{1/t_k}(y_k^{-1}  x )) \omega \pa{\delta_{1/t_k}(\phi_{x^{-1}}(\Lambda_{t_k} \omega)) }\in \mathbb{B}(0,1),
\end{equation*}
then by a slight abuse of notation, we could still call $t_k$ the sequence such that
\beq\label{eq:inclusion}
(\delta_{1/t_k}(y_k^{-1}  x ))  \omega    d \phi_{\zeta}(\omega)     \pa{\delta_{1/t_k}( (d \phi_{\zeta}(\Lambda_{t_k} \omega))^{-1} \phi_{x^{-1}}(\Lambda_{t_k} \omega))}  \in \mathbb{B}(0,1)
\eeq
for all $k$, where we exploited the intrinsic differentiability of $\phi$ at $\zeta$ and the homogeneity of the intrinsic differential $d\phi_\zeta$.
Due to the intrinsic differentiability, taking into account \eqref{eq:inclusion} as $k\to\infty$,
it follows that
$$
\omega d \phi_{\zeta}(\omega) \in \mathbb{B}(z, 1).
$$
As a consequence, $ \omega d \phi_{\zeta}(\omega) \in \mathbb{B}(z,1) \cap \U_\zeta$
and then 
\beq\label{eq:piS_z}
\omega=\pi_{\W,\U_\zeta}^{\W,\V}(\omega d \phi_{\zeta}(\omega))\in \pi_{\W,\U_\zeta}^{\W,\V}(\U_\zeta \cap \mathbb{B}(z,1))=S_z,
\eeq
that is not possible by our assumption. This concludes the proof of Claim 1.

Now we introduce the density function
$$
\alpha (t, \eta)=J \Phi(\sigma_x(\Lambda_t(\eta)))
$$
to write 
$$
 \int_{\Lambda_{1/t_k}( \sigma_x^{-1}(\Phi^{-1}(\mathbb{B}(y_k,t_k))))} \alpha(t_k, \eta) \ d   \mathcal{H}^{m}_{|\cdot|} (\eta) = I_k+J_k.
$$
The sequence $I_k$, defined in the following equality, satisfies the estimate
\[
I_k=  \  \int_{S_z \cap \Lambda_{1/t_k}(\sigma_x^{-1}(\Phi^{-1}(\mathbb{B}(y_k,t_k))))} \alpha(t_k, \eta) \ d   \mathcal{H}^{m}_{|\cdot|} (\eta) 
\leq  \int_{S_z } \alpha(t_k, \eta) \ d   \mathcal{H}^{m}_{|\cdot|} (\eta).
\]
Analogously for   
\[
J_k=  \ \int_{\Lambda_{1/t_k}(\sigma_x^{-1}(\Phi^{-1}(\mathbb{B}(y_k,t_k)))) \sm S_z} \ \alpha(t_k, \eta) \ d   \mathcal{H}^{m}_{|\cdot|} (\eta),
\]
we get 
\[
J_k \leq   \int_{\mathbb{B}_{\mathbb{W}}(0,R_0) \sm S_z} 1_{\Lambda_{1/t_k}(\sigma_x^{-1}((\Phi^{-1}(\mathbb{B}(y_k,t_k))))}(\eta) \ \alpha(t_k, \eta) \ d   \mathcal{H}^{m}_{|\cdot|} (\eta).
\]
Claim 1 joined with the dominated convergence theorem proves that $J_k\to0$ as $k\to \infty$,
hence $I_k\to\cs^{M}(\mu,x)$. To study the asymptotic behaviour of $I_k$, we
first observe that the continuity of $J\Phi$ ensured by Proposition~\ref{cont-jac} yields
\[
\alpha(t_k,\eta) \to J\Phi(\zeta)
\]
as $k \to \infty$. 
It follows that  
\beq\label{eq:theta}
\cs^{M}(\mu,x)=\lim_{k\to\infty}I_k\leq   J\Phi(\zeta) \ \mathcal{H}^{m}_{|\cdot|}(S_z).
\eeq
As a result, taking into account \eqref{eq:theta} and Proposition~\ref{propjac}, we have proved that
\beq
\cs^{M}(\mu ,x)  \leq \frac{ | \bV \wedge \bW |}{| \bV \wedge \bU_{\zeta} |} \ \mathcal{H}^{m}_{|\cdot|}(S_z)
\eeq
where $\bV$ is an orienting unit $p$-vector of $\V$ and $\bW$, $\bU_{\zeta}$ are orienting unit $(q-p)$-vectors of $\W$ and $\U_{\zeta}$, respectively.
By Lemma~\ref{lm:MVPsubgroups}, for $B=\U_\zeta\cap\B(z,1)$, the following formula holds
\beq\label{eq:LemmaAlg} 
\mathcal{H}^{m}_{|\cdot|}(\pi_{\W, \U_\zeta}^{\W, \V}(\U_\zeta\cap\B(z,1)))= \frac{| \bV \wedge \bU_\zeta |}{| \bV \wedge \bW |} \ \mathcal{H}^{m}_{|\cdot|}(\U_\zeta\cap\B(z,1)).
\eeq
It follows that 
\beq\label{eq:firstIneq}
\cs^{M}(\mu ,x) \leq\mathcal{H}^{m}_{|\cdot|}(\U_\zeta \cap \mathbb{B}(z,1)) \leq   \mathcal{H}^{m}_{|\cdot|}(\U_\zeta \cap \mathbb{B}(z_0,1)),
\eeq
where $z_0 \in \mathbb{B}(0,1)$ is chosen
such that $\beta_d(\U_\zeta)= \mathcal{H}^{m}_{|\cdot|}(\U_\zeta \cap \mathbb{B}(z_0,1) )$.

For the opposite inequality, we follow the scheme in the proof of \cite[Theorem 3.1]{Mag31}.
We consider a specific family of points 
$y_t^0= x \delta_t z_0 \in \mathbb{B}(x,t)$ and fix $\lambda>1$. We have that
$$
\sup_{0<t<r} \frac{\mu(\mathbb{B}(y_t^0,\lambda t))}{(\lambda t)^{M}} \leq \sup_{\substack{y \in \mathbb{B}(x,t), \\ 0 < t < \lambda r}} \frac{\mu(\mathbb{B}(y,t))}{t^{M}}
$$
for every $r>0$, therefore
\beq\label{eq:limsuptheta}
\limsup_{t \to 0^+} \frac{\mu(\mathbb{B}(y_t^0,\lambda t) )}{(\lambda t)^{M}} \leq \cs^{M}(\mu ,x).
\eeq
We introduce the set 
\begin{align*}
A^0_t &= \Lambda_{1/\lambda t}(\sigma_x^{-1}(\Phi^{-1}(\mathbb{B}(y_t^0, \lambda t)))\\
& = \left\{ \eta \in \Lambda_{1/\lambda t} (\sigma_{x}^{-1}(A)) :  \eta  \left( \delta_{\frac{1}{\lambda t}}(\phi_{{x}^{-1}}(\Lambda_{\lambda t} \eta)) \right) \in \mathbb{B}(\delta_{1/ \lambda} z_0, 1)    \right\}.
\end{align*}
The second equality can be deduced from \eqref{eq:setLambda}. Then we can rewrite
\begin{equation}\label{eq:mu}
\frac{\mu(\mathbb{B}(y_t^0,\lambda t))}{(\lambda t)^M}= \int_{A_t^0} \alpha( \lambda t, \eta) d \mathcal{H}^{m}_{|\cdot|}( \eta) 
= \frac{1}{\lambda^M} \int_{\delta_{\lambda} A_t^0} \alpha( \lambda t, \delta_{1/\lambda} \eta) d \mathcal{H}^{m}_{|\cdot|}(\eta).
\end{equation}
The domain of integration satisfies
$$ \delta_{\lambda} A_t^0= \left\{ \eta \in \Lambda_{1/t}(\sigma_{x}^{-1}(A)) : \eta \left( \delta_{1/t}(\phi_{{x}^{-1}}(\Lambda_{ t} \eta)) \right) \in \mathbb{B}(z_0, \lambda) \right\}.$$
Due to \eqref{insieme} and the definition of $A^0_t$, it holds  
\[
\delta_{\lambda} A_t^0 \subset \mathbb{B}_\W(0,\lambda R_0).
\]
\textbf{Claim 2:} For every $\eta \in \pi_{\W, \U_\zeta}^{\W, \V}( \U_\zeta \cap B(z_0, \lambda))$, we have
\beq\label{eq:limA^0=1}
\lim_{t \to 0^+}1_{\delta_{\lambda} A_t^0} (\eta)=1.
\eeq	
The intrinsic differentiability of $\phi$ at $\zeta$ shows that
\[
\eta \left( \delta_{1/t}(\phi_{{x}^{-1}}(\Lambda_{ t} \eta)) \right)\to \eta d\phi_\zeta(\eta)\quad\text{as}\quad t\to0.
\]
Taking into account \eqref{eq:invproj} and \eqref{eq:piS_z}, we get
\[
\pi^{\U_\zeta,\V}_{\U_\zeta,\W}(\eta)=\eta d\phi_\zeta(\eta),
\]
hence our assumption on $\eta$ can be written as follows
\[
d\pa{\eta d\phi_\zeta(\eta),z_0}<\lambda.
\]
We conclude that $\eta\in \delta_{\lambda} A_t^0$ for any $t>0$ sufficiently small, therefore the limit \eqref{eq:limA^0=1} holds and the proof of Claim 2 is complete.

By Fatou's lemma, taking into account \eqref{eq:limsuptheta} and \eqref{eq:mu}
we get
\[
\frac{1}{\lambda^{M}} 
\int_{\pi_{\W, \U_{\zeta}}^{\W, \V}( \U_{\zeta} \cap B(z_0, \lambda))} \liminf_{t\to0}\pa{1_{\delta_{\lambda} A_t^0}(\eta) \alpha( \lambda t, \delta_{1/\lambda} \eta)} d \mathcal{H}^{m}_{|\cdot|}(\eta)\le \cs^{M}(\mu ,x).
\]
Claim 2 joined with Propositions~\ref{cont-jac} and \ref{propjac} yield
\[
\frac1{\lambda^{M}} \frac{| \bV \wedge \bW |}{| \bV \wedge \bU_\zeta |}
\mathcal{H}^{m}_{|\cdot|}\pa{\pi_{\W, \U_\zeta}^{\W, \V}( \U_\zeta \cap\B(z_0,1))}\le \cs^{M}(\mu ,x).
\]
Applying again \eqref{eq:LemmaAlg}, we obtain
\[
\frac1{\lambda^{M}} \mathcal{H}^{m}_{|\cdot|}(\U_\zeta\cap\B(z_0,1))\le \cs^{M}(\mu ,x).
\]
Taking the limit as $\lambda\to1^+$ and considering the opposite inequality \eqref{eq:firstIneq}, the proof is complete.
\end{proof}

The previous theorem joined with Theorem~\ref{thm:metarea} leads us to a general area formula for graphs of intrinsically differentiable maps with continuous intrinsic differential.

\begin{teo}[Area formula]\label{teo:area}
	We consider a couple $(\W,\V)$ of complementary subgroups of $\G$. Let $m$ and $M$ be the topological and the Hausdorff dimensions of $\W$, respectively. We consider an open set $A \subset \W$ and a mapping $\phi:A \to \V$. We also assume that $\phi$ is intrinsically differentiable at any point of $A$ and that $d\phi:A \to \IL(\W,\V)$ is  continuous.
	Setting $\Sigma=\Phi(A)$, for every Borel set $B \subset \Sigma $ we have
	\begin{equation}\label{eq:area}
		\int_{\Phi^{-1}(B)} J\Phi(w)\  d \mathcal{H}_{|\cdot|}^{m} (w)= \int_B \beta_{d}(\T_x) \ d \mathcal{S}^{M}(x),
	\end{equation}
	where $\T_x$ is the tangent subgroup to $\Sigma$ at $x$. 
\end{teo}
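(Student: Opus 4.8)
The left-hand side of \eqref{eq:area} is, by definition, $\mu(B)$ for the measure $\mu$ appearing in the statement. The plan is to invoke the measure-theoretic area formula of Theorem~\ref{thm:metarea}, applied to the outer measure $\mu$ over the diametrically regular space $X=\G$, with exponent $\alpha=M$ and with the Borel set $\Sigma$ in the role of $A$. Granting that the five hypotheses hold, Theorem~\ref{thm:metarea} gives
\[
\mu(B)=\int_B \cs^M(\mu,x)\,d\cS^M(x)
\]
for every Borel $B\subset\Sigma$. The upper blow-up Theorem~\ref{theorem:ubu} then identifies $\cs^M(\mu,x)=\beta_d(\U_\zeta)$ for $x=\Phi(\zeta)$, where $\U_\zeta=\graph(d\phi_\zeta)$; and since $\phi$ is intrinsically differentiable at $\zeta$, Theorem~\ref{characterizationP} yields $\U_\zeta=\T_x$, the tangent subgroup of $\Sigma$ at $x$. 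Substituting $\cs^M(\mu,x)=\beta_d(\T_x)$ gives \eqref{eq:area}. So the proof reduces to verifying hypotheses (1)--(5) of Theorem~\ref{thm:metarea}.

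Hypothesis (2) is automatic, since, as recalled after Theorem~\ref{thm:metarea}, $(\cF_b)_{\mu,\zeta_M}=\cF_b$ and the family of closed balls covers every subset of $\G$ finely. For hypothesis (1), the graph map $\Phi\colon A\to\Sigma$ is a continuous bijection whose inverse on $\Sigma$ is the restriction $\pi^{\W,\V}_\W|_\Sigma$ of a continuous group projection; hence $\mu$ is the image under $\Phi$ of the Radon measure $J\Phi\,\mathcal{H}^m_{|\cdot|}\llcorner A$ on $\W$, where $J\Phi$ is continuous and positive by Propositions~\ref{cont-jac} and~\ref{propjac}. Therefore $\mu$ is a Borel regular outer measure on $\G$, concentrated on $\Sigma$. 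For hypothesis (3), fix $x=\Phi(\zeta)\in\Sigma$; since $\pi^{\W,\V}_\W$ is continuous and closed balls of $\G$ are compact, $\Phi^{-1}(\B(x,r)\cap\Sigma)=\pi^{\W,\V}_\W(\B(x,r)\cap\Sigma)$ is contained in a compact subset of $\W$ which, for $r$ small, lies in the open set $A$; there $J\Phi$ is bounded and $\mathcal{H}^m_{|\cdot|}$ is finite, so $\mu(B(x,r))<\infty$. By separability of $\G$ one extracts a countable cover of $\Sigma$ by such balls.

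Hypotheses (4) and (5) are where the upper blow-up does the real work. By Theorem~\ref{theorem:ubu} we have $\cs^M(\mu,x)=\beta_d(\U_\zeta)$ at every $x=\Phi(\zeta)\in\Sigma$, and choosing $z=0$ in Definition~\ref{def:sphfactor} gives $\beta_d(V)\ge\mathcal{H}^n_{|\cdot|}(V\cap\B(0,1))>0$ for any subspace $V$; hence $\{x\in\Sigma:\cs^M(\mu,x)=0\}=\emptyset$, which is trivially $\sigma$-finite for $\cS^M$, establishing (4). For (5), the same identity shows that $\cs^M(\mu,\cdot)$ is finite at every point of $\Sigma$. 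Given a Borel set $N\subset\Sigma$ with $\cS^M(N)=0$, write $N=\bigcup_{k\in\N}N_k$ with $N_k=\{x\in N:\cs^M(\mu,x)<k\}$; by the standard density comparison (on a set $E$ on which $\cs^M(\mu,\cdot)\le k$ one has $\mu(E)\le c\,k\,\cS^M(E)$ for a dimensional constant $c$, cf.\ \cite{LecMag21}) we get $\mu(N_k)\le c\,k\,\cS^M(N_k)=0$, hence $\mu(N)=0$. This gives $\mu\llcorner\Sigma\ll\cS^M\llcorner\Sigma$, completing the verification and the proof.

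The genuine content of the area formula is thus carried entirely by Theorem~\ref{theorem:ubu} and the algebraic Lemma~\ref{lm:MVPsubgroups} behind it; the present theorem is a matter of assembling the abstract measure-theoretic area formula. The one point I would single out as the main obstacle is hypothesis (5): absolute continuity is not among the assumptions of Theorem~\ref{thm:metarea} and must be deduced, precisely from the everywhere-finiteness of the spherical Federer density furnished by the upper blow-up. Everything else is routine bookkeeping with the pushforward structure of $\mu$.
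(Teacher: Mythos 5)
Your proof is correct and follows essentially the same route as the paper: combine the upper blow-up Theorem~\ref{theorem:ubu} with Theorem~\ref{characterizationP} to identify the Federer density with $\beta_d(\T_x)$, then verify hypotheses (1)--(5) of the measure-theoretic area formula of Theorem~\ref{thm:metarea}. The only cosmetic difference is that for hypothesis (5) the paper simply cites \cite[Proposition~3.3]{LecMag21}, whereas you spell out the underlying density-comparison argument from the everywhere-finiteness of $\cs^M(\mu,\cdot)$.
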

\begin{proof}
	Defining the measure
	\[
	\mu(B)=\int_{\Phi^{-1}(B)} J\Phi(w)\  d \mathcal{H}_{|\cdot|}^{m} (w)
	\]
	for every Borel set $B\subset \G$, we are in the assumption of Theorem~\ref{theorem:ubu},
	hence we get 
	\beq \label{eq:Fedbeta}
	\cs^M( \mu, x)= \ \beta_{d}( \graph(d\phi_\zeta))
	\eeq
	for every $x=\Phi(w)\in\Sigma$, where $\cs^M(\mu,x)$ is the spherical Federer $M$-density of $\mu$ at $x$.
	Due to Theorem~\ref{characterizationP}, the set $\graph(\phi)$ has a tangent subgroup $\T_x$ at $x$
	which equals $\graph(d\phi_w)$, therefore $\cs^M( \mu, x)= \ \beta_{d}( \T_x)$.
	To conclude the proof we apply Theorem~\ref{thm:metarea}.
	First of all, the measure $\mu$ can be immediately extended to an outer measure on $\G$, that is automatically Borel regular, so condition (1) of Theorem~\ref{thm:metarea} is proved.
	The diametric regularity of homogeneous groups and the
	validity of condition (2) of the same theorem are discussed
	in Section~\ref{sect:FedDens}. By standard arguments of elementary topology one can cover $\Sigma$ with a countable family of metric balls with $\mu$ finite measure, hence condition (3) of
	Theorem~\ref{thm:metarea} holds. 
	Since the spherical factor \eqref{eq:Fedbeta} is everywhere finite and positive on $\Sigma$, then both conditions (4) and (5) of Theorem~\ref{thm:metarea} hold.
	The latter follows from \cite[Proposition~3.3]{LecMag22}.  
	Finally, the application of \eqref{eq:metarea} concludes the proof.
\end{proof}

An important consequence is the following result.

\begin{teo}\label{teo:areauid}
Let $(\W,\V)$ be a couple of complementary subgroups of $\G$. Let $m$ and $M$ be the topological and the Hausdorff dimension of $\W$, respectively. We consider an open set $A \subset \W$, $\phi:A \to \V$ and  define $\Sigma=\Phi(A)$. 
If $\phi$ is uniformly intrinsically differentiable on $A$, then for every Borel set $B \subset \Sigma $ we have
	\begin{equation}
		\int_{\Phi^{-1}(B)} J\Phi(w)\  d \mathcal{H}_{|\cdot|}^{m} (w)= \int_B \beta_{d}(\T_x) \ d \mathcal{S}^{M}(x),
	\end{equation}
	where $\T_x$ is the tangent subgroup to $\Sigma$ at $x$, according to Definition~\ref{TangenteP}.
\end{teo}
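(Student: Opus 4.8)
The plan is to deduce this theorem directly from Theorem~\ref{teo:area}, since its hypotheses are weaker than uniform intrinsic differentiability. The only point to check is that uniform intrinsic differentiability on $A$ implies both the pointwise intrinsic differentiability of $\phi$ at every point of $A$ and the continuity of the intrinsic differential $d\phi:A\to\IL(\W,\V)$; once these two facts are in place, the formula is exactly \eqref{eq:area}.

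First I would observe that uniform intrinsic differentiability at a point $\bar w\in A$ forces intrinsic differentiability at $\bar w$ in the sense of Definition~\ref{d:intrinsicDiff}. Indeed, taking $w'=\bar w$ in \eqref{equid} is admissible, since $\|\bar w^{-1}w'\|=0<r$ for every $r>0$, and with this choice \eqref{equid} reduces to
\[
\lim_{r\to0^+}\sup_{0<\|w\|<r}\frac{\|L(w)^{-1}\phi_{\Phi(\bar w)^{-1}}(w)\|}{\|w\|}=0,
\]
which, recalling that $\Phi(\bar w)=\bar w\phi(\bar w)=x$, is precisely \eqref{intdiff}. Hence $\phi$ is intrinsically differentiable at $\bar w$ with $d\phi_{\bar w}=L$, and since $\bar w\in A$ was arbitrary, $\phi$ is intrinsically differentiable at every point of $A$.

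Next, I would invoke Proposition~\ref{uidimpliescont}: under the standing assumption that $\phi$ is uniformly intrinsically differentiable on $A$, the map $d\phi:A\to\IL(\W,\V)$, $w\mapsto d\phi_w$, is continuous on $A$. At this stage both hypotheses of Theorem~\ref{teo:area} are verified for the couple $(\W,\V)$, the open set $A$, and the map $\phi$. Applying Theorem~\ref{teo:area} with $\Sigma=\Phi(A)$ then yields, for every Borel set $B\subset\Sigma$,
\[
\int_{\Phi^{-1}(B)} J\Phi(n)\ d\mathcal{H}^m_{|\cdot|}(n)=\int_B\beta_d(\T_x)\ d\mathcal{S}^M(x),
\]
where $\T_x$ is the tangent subgroup of $\Sigma$ at $x$ in the sense of Definition~\ref{TangenteP}, which is exactly the asserted identity. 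There is no genuine obstacle here: the entire content is the reduction, and the two ingredients (pointwise differentiability from restricting $w'=\bar w$, and continuity of $d\phi$ from Proposition~\ref{uidimpliescont}) are either immediate or already established.
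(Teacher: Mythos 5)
Your proposal is correct and follows exactly the paper's own argument: reduce to Theorem~\ref{teo:area} by noting that uniform intrinsic differentiability gives pointwise intrinsic differentiability (your specialization $w'=\bar w$ just makes this explicit) and that continuity of $d\phi$ follows from Proposition~\ref{uidimpliescont}. Nothing further is needed.
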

\begin{proof}
	It is sufficient to observe that $\phi$ satisfies the hypothesis of Theorem~\ref{teo:area}. By definition of uniform intrinsic differentiability on $A$, $\phi$ is intrinsically differentiable at every point of $A$. The continuity of the intrinsic differential $A \ni w \to d\phi_w \in \IL(\W,\V)$ follows from Proposition~\ref{uidimpliescont}.
\end{proof}

\section{Applications to special classes of intrinsic graphs}\label{sect:app}

This section is divided into two parts.
The first one introduces the area formula for the level sets arising from Theorem~\ref{corintdiffparam}, hence proving Theorem~\ref{teo:arealevelset}.
The second part provides an area formula for 
all $(\G,\R^k)$-regular sets of $\G$, proving that the Jacobian \eqref{def:JacPhi} can be written in terms of suitable partial derivatives of the parametrization.

\subsection{Area formula for level sets}\label{sect:arealevels}

The aim of the section is to prove Theorem~\ref{teo:arealevelset}, which is a special version of Theorem~\ref{teo:areaIntro}. Precisely, we wish to compute the spherical measure of 
$(\G,\M)$-regular sets of $\G$ (Definition~\ref{def:GMregular}). $\G$ and $\M$ are assumed to be stratified groups.

To find the area of $(\G,\M)$-regular sets of $\G$, we use their graph structure and the general area formula of Theorem~\ref{teo:areauid}.
Let $\Omega \subset \G$ be an open set and let $f \in C^1_h(\Omega, \M)$. We consider an orthonormal basis $(b_1, \dots, b_p)$ of $\M$, along with its dual basis $(b_1^{\star}, \dots, b_p^{\star}) \subset \M^{\star}$. For every $i=1, \dots p$, we denote by $R^i_Hf(x)$ the unique vector that represents the linear map $b_i^{\star} \circ Df(x):\G \to \R$ as follows
\[
(b^{\star}_i \circ Df(x))(v)= \langle R^i_Hf(x), v \rangle 
\]
for every $v \in \G$. For a homogeneous subgroup $\V \subset \G$ and for $i=1 \dots, p$, the vector $R^i_{\V}f(x) \in \V$ represents the linear map $b_i^{\star} \circ Df(x)|_{\V}:\V \to \R$ as follows 
\[
(b_i^{\star} \circ Df(x)|_{\V})(v)= \langle R^i_{\V}f(x),v \rangle 
\]
for every $v \in \V$. Notice that if $(b_1, \dots, b_p)$ is an orthonormal basis, then a standard argument (see for instance \cite[Section~2.1]{GMS98}) ensures that
\begin{equation}
	J_Hf(x)= | R^1_Hf(x) \wedge \dots \wedge R^p_Hf(x) | \quad \mathrm{and} \quad J_{\V}f(x)= | R^1_{\V}f(x)\wedge \dots \wedge R^p_{\V}f(x)|.
\end{equation}

We conclude the section with the proof of the area formula for $(\G,\M)$-regular sets. 

\begin{proof}[Proof of Theorem~\ref{teo:arealevelset}]
	Remark~\ref{rem:exglobA} ensures the existence and uniqueness of the mapping $\phi:A\to\V$ such that $\Sigma\cap\Omega'=\Phi(A)$.
	By Theorem~\ref{intdiffpar}, the map $\phi$ is uniformly intrinsically differentiable on $A$. Let us consider $x =\Phi(\zeta) \in \Sigma\cap\Omega'$ and set $\U_\zeta=\mathrm{graph}(d \phi_{\zeta})$.
	By Theorem~\ref{intdiffpar}, we obtain 
	$$
	\T_x=\U_{\zeta}=  \ker (Df(x)),
	$$
	where $\T_x$ is the tangent subgroup to $\Sigma$ at $x$, according 
	to Definition~\ref{TangenteP}.
	Thus, by exploiting Theorem~\ref{teo:areauid} we get 
	\begin{equation}\label{formula-phi}
		\int_B \beta_{d}(\T_x) \ d \mathcal{S}^{Q-P}(x)= \int_{\Phi^{-1}(B)} J \Phi(w) \  d \mathcal{H}_{|\cdot|}^{q-p} (w)
	\end{equation}
	for every Borel set $B \subset\Sigma\cap \Omega'$.
	Consider $\bU_\zeta= u_{p+1} \wedge \dots \wedge u_{q} $ such that $ ( u_{p+1}, \dots, u_{q} )$ is an orthonormal basis of $\U_\zeta$. We claim that
	\beq\label{eq:c(x)}
	\frac{ J_Hf(x)}{J_{\V}f(x)} = \frac{1}{| \bV \wedge \bU_\zeta |}.
	\eeq
	Let $(v_1, \dots, v_p)$ be an orthonormal basis of $\V$ and let us complete it to an orthonormal basis $(v_1, \dots, v_q)$ of $\G$.
	Let us denote by $*$  the Hodge operator in $\G$ with respect to the fixed orientation $$\textbf{e}= v_1 \wedge \dots \wedge v_{q} .$$ For a $p$-vector $\eta\in\Lambda_p\G$, the $(q-p)$-vector $ \ast \eta$ is uniquely defined by
	\begin{equation}\label{eq10}
		\xi \wedge \ast \eta= \langle \xi, \eta \rangle \textbf{e}
	\end{equation}
	for all $p$-vectors $\xi \in \Lambda_p\G$.
	We notice that $\spn \{ R_H^1(x), \dots, R_H^pf(x) \}$
	is orthogonal to $\U_{\zeta}$. In fact for every $i \in \{1, \cdots, p \}$ and $j \in \{ p+1, \cdots,  q\}$ we have
	\begin{align*}
		\langle R_H^if(x), u_j \rangle= (b_i^{\star} \circ Df(x))(u_j)=b_i^{\star}(Df(x)(u_j))=0.
	\end{align*}
	It follows that
	\begin{equation}\label{ker2}
		u_{p+1} \wedge \dots \wedge u_{q} = \ast ( R_H^1f(x) \wedge \dots \wedge R^p_Hf(x)) \lambda
	\end{equation}
	for some $\lambda \in \mathbb{R}$ (see for instance {\cite[Lemma 5.1]{Mag12A}}). Since the Hodge operator is an isometry, we get
	\begin{equation}\label{eq9}
		| \lambda |= \frac{1}{| R_H^1f(x) \wedge \dots \wedge R^p_Hf(x) |}.
	\end{equation}
	Due to \eqref{eq9} and \eqref{eq10}, we have
	\begin{align*}
		| \bV \wedge \bU_{\zeta} | &= | \lambda| \,
		| v_1 \wedge \dots \wedge v_p \wedge \ast ( R_H^1f(x) \wedge \dots \wedge R^p_Hf(x)) | \\
		&= \frac{| \langle v_1 \wedge \dots \wedge v_p, R_H^1f(x) \wedge \dots \wedge R^p_Hf(x) \rangle \textbf{e} |}{| R_H^1f(x) \wedge \dots \wedge R^p_Hf(x)|  }\\
		&= \frac{| \langle v_1 \wedge \dots \wedge v_p, R_H^1f(x) \wedge \dots \wedge R^p_Hf(x) \rangle|}{ |  R_H^1f(x) \wedge \dots \wedge R^p_Hf(x)| }\\
		&= \frac{| R_{\V}^1f(x) \wedge \dots \wedge R^p_{\V}f(x) |}{ | R_H^1f(x) \wedge \dots \wedge R^p_Hf(x) |  }=\frac{J_{\V}f(x)}{J_Hf(x)},
	\end{align*}
	hence \eqref{eq:c(x)} is proved. Thus, by combining \eqref{eq:c(x)} and Proposition~\ref{propjac}, surely
	\beq\label{eq:duejac}
	J\Phi(\zeta) =  \frac{|\bV \wedge \bW|}{| \bV \wedge \bU_\zeta |}= |\bV \wedge \bW| \ \frac{ J_Hf(x)}{J_{\V}f(x)} 
	\eeq
	for every $x=\Phi(\zeta) \in \Sigma\cap\Omega'$. Combining \eqref{eq:duejac} and \eqref{formula-phi}, we get \eqref{formula-f}, concluding the proof.
\end{proof}

\begin{Remark}
	The previous theorem gives a local formula for the spherical measure, that can be globally defined on any $(\G,\M)$-regular set.
\end{Remark}

\subsection{Area formula and intrinsic partial derivatives}\label{sect:intder}

In this section, we assume that $(\W,\V)$ is a couple of \textit{orthogonal} complementary subgroups of a stratified group $\G$. Moreover, we assume that $\V$ is a \textit{horizontal subgroup}, i.e.\ $\V$ is a homogeneous subgroup contained in $V_1$. Under these assumptions, we obtain a more explicit form of the Jacobian, in terms of suitable intrinsic partial derivatives.

Now we fix some notation that will be used throughout the section. We denote by $p$ the topological dimension of $\V$ and we consider $(v_1, \dots, v_p, w_{p+1}, \dots, w_q)$ an orthonormal  graded basis of $\G$ such that $(v_1, \dots, v_p)$ is a basis of $\V$ and $(w_{p+1}, \dots, w_q)$ is a basis of $\W$. 
We introduce the maps
\begin{align}
	i_{\V}:\V \to \R^p,& \ i_{\V} \Bigg( \sum_{i=1}^p x_iv_i \Bigg)=(x_1, \dots, x_p) \label{i_V} ,\\
	i_{\W}:\W \to \R^{q-p},& \ i_{\W}\Bigg(\sum_{i=p+1}^q x_iw_i \Bigg)=(x_{p+1}, \dots, x_q) \label{i_W} ,\\
	i_{\G}:\G \to \R^{q},& \ i_{\G}\Bigg(\sum_{i=1}^p x_i v_i + \sum_{i=p+1}^q x_i w_i  \Bigg)=(x_{1}, \dots, x_q). \label{i_G}
\end{align}
If $A \subset \W$, we define $\widetilde{A} = i_{\W}(A) \subset \R^{q-p} $ and for $w \in A$, we set $\widetilde{w}=i_{\W}(w) \in \widetilde{A}$.
For $A \subset \W$ and $\phi:A \to \V$, we denote by $\widetilde{\phi}:\widetilde{A} \to \R^p$ the map 
\[
\widetilde{\phi}=i_{\V} \circ \phi \circ i_{\W}^{-1}
\]
and we set its components $\phi_i:\widetilde{A} \to \R$, for $i=1, \ldots, p$ as the maps such that 
\[
\widetilde{\phi}(\widetilde{w})=(\widetilde\phi_1(\widetilde{w}),\widetilde\phi_2(\widetilde{w}), \dots, \widetilde\phi_p(\widetilde{w}))
\]
for every $\widetilde{w} \in \widetilde{A}$. If $\Phi: A \to \G$ is the graph map of $\phi$, we introduce $\widetilde{\Phi}: \widetilde{A} \to \G, \ \widetilde{\Phi}=\Phi \circ i_{\W}^{-1}$.

If $\phi:A \to \V$, with $A \subset \W$ open set, is intrinsically differentiable at $w \in A$, then its intrinsic differential $d\phi_w$ is a linear map, according to \cite[Proposition 3.4]{DiD21}. 

We denote by $\nabla^{\phi}\widetilde\phi(w) \in \R^{p \times (n_1-p)}$ the real matrix representing the intrinsic differential $d\phi_w$ with respect to the bases $(w_{p+1}, \ldots, w_q)$ and $(v_1, \ldots, v_p)$. We denote it equivalently as $\nabla^{\phi}\widetilde\phi(\widetilde{w})$ since it clearly equals the matrix representing $i_{\V} \circ d\phi_w \circ i_{\W}^{-1}: \R^{q-p} \to \R^p$ with respect to the canonical bases of $\R^{q-p}$ and $\R^p$.

For every $j=p+1, \dots q$ we denote by $X_j \in \mathrm{Lie}(\G)$ the left-invariant vector field such that $X_j(0)=w_j$.

\begin{deff}[Projected vector fields]\label{def:projvf}
	Let $A \subset \W$ be an open set and let $\phi:A \to \V$ be a continuous function. Let $\Phi$ be the graph map of $\phi$. For $j=p+1, \dots, q$ we define the \textit{continuous projected vector field $D^{\phi}_{X_j}$} on $\W$ as
	\begin{equation}\label{projvf}
		\big(D^{\phi}_{X_j}\big)_w(f)=\big(X_j \big)_{\Phi(w)}(f \circ \pi_{\W})
	\end{equation}
	for every $w \in A$ and $f \in C^{\infty}(\W)$.
\end{deff}
Notice that by \cite[Remark 3.3]{ADDDLD24}, formula \eqref{projvf} is equivalent to
$$\big(D^{\phi}_{X_j} \big)_w= d (\pi_{\W})_{\Phi(w)}\Big(\big(X_j \big)_{\Phi(w)}\Big).$$

\begin{deff}[Intrinsic partial derivatives]
	\label{intrinsicder}
	Let $A \subset \W$ be an open set, let $\phi:A \to \V$ be a continuous function and consider $w\in A$. Given $j \in \{ p+1, \dots, n_1 \}$, we say that \textit{$\widetilde{\phi}$ has $D^{\phi}_{X_j}$-derivative at $\widetilde{w}$} if and only if there exists a vector $\begin{pmatrix}
		\alpha_{1,j} &
		\dots &
		\alpha_{p,j}
	\end{pmatrix} \in \R^p$ such that for any integral curve $\widetilde{\gamma}: (-\delta, \delta) \to \widetilde{A}$ of $ (i_{\W})_*(D^{\phi}_{X_j})$ with $\widetilde{\gamma}(0)=\widetilde{w}$ the equality
	$$\lim_{s \to 0} \frac{\widetilde{\phi}( \widetilde{\gamma}(s))- \widetilde{\phi}(\widetilde{w})}{s} = \begin{pmatrix}
		\alpha_{1,j} &
		\dots &
		\alpha_{p,j}
	\end{pmatrix}^T$$
	holds.
	We introduce the notation $$
	D^{\phi}_{X_j} \widetilde{\phi} (\widetilde{w})  = 
	\begin{pmatrix}
		D^{\phi}_{X_j} \widetilde\phi_1(\widetilde{w})\\
		\dots\\
		D^{\phi}_{X_j}  \widetilde\phi_p(\widetilde{w})
	\end{pmatrix}=
	\begin{pmatrix}
		\alpha_{1,j}\\
		\dots\\
		\alpha_{p,j}
	\end{pmatrix}$$
	for $j= p+1, \dots, n_1$.
\end{deff}
Taking into account both \cite[Proposition 3.27]{ADDDLD24} and \cite[Proposition 3.19]{ADDDLD24}, one easily observes that the continuity of the intrinsic differential 
immediately gives the continuity of the intrinsic partial derivatives and the following proposition holds.
\begin{prop}\label{prop:components}
	Let $A \subset \W$ be an open set and $\phi:A \to \V$ be a continuous function. Assume that $\phi$ is intrinsically differentiable at every $w \in A$ and assume that $d\phi:A \to \IL_{\W,\V}$ is continuous. Then, for every $i \in \{ 1, \dots , p \}, \ j \in \{p+1, \dots, n_1 \}$ and for every $\widetilde{w}\in \widetilde{A}$ there exists the intrinsic partial derivative $D^{\phi}_{X_j}\widetilde\phi_i(\widetilde{w})$ and
	\begin{equation}\label{eq:intpar-diff}
		D^{\phi}_{X_j} \widetilde\phi_i(\widetilde{w})= [\nabla^{\phi}  \widetilde\phi(\widetilde{w})]_{i,j}.
	\end{equation}
	Moreover, the map $D^{\phi}_{X_j}\widetilde\phi_i: \widetilde{A} \to \R$ is continuous.
\end{prop}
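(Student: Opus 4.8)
The plan is to deduce Proposition~\ref{prop:components} directly from the results of \cite{ADDDLD20pr} cited just before its statement, once the present framework is matched to theirs. The key observation is that the hypotheses of the proposition---$\phi$ intrinsically differentiable at every point of $A$ with continuous intrinsic differential $d\phi:A\to\IL(\W,\V)$, together with the standing assumptions that $(\W,\V)$ are orthogonal complementary subgroups and $\V\subset V_1$ is horizontal---place us exactly in the setting treated in \cite{ADDDLD20pr}. So the work is essentially a translation of notation plus two invocations.

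First I would fix the orthonormal graded basis $(v_1,\dots,v_p,w_{p+1},\dots,w_q)$ and recall that, by \cite[Proposition~3.4]{DiD21}, the intrinsic differential $d\phi_w$ is genuinely linear from $\W$ to $\V$ for each $w\in A$; hence it is faithfully represented by the matrix $\nabla^\phi\widetilde\phi(w)\in\R^{p\times(n_1-p)}$ with respect to the bases $(w_{p+1},\dots,w_q)$ and $(v_1,\dots,v_p)$. Note only the first $n_1-p$ of the $w_j$'s (those lying in $V_1$) can contribute, since $\V\subset V_1$ and $d\phi_w$ is homogeneous, so the matrix has the stated shape. Next, for each $j\in\{p+1,\dots,n_1\}$ I would invoke \cite[Proposition~3.19]{ADDDLD20pr}: under intrinsic differentiability of $\phi$ at $\widetilde w$, the $D^\phi_{X_j}$-derivative of $\widetilde\phi$ at $\widetilde w$ exists and its $i$-th component equals the $(i,j)$-entry of the matrix of the intrinsic differential, which is precisely $[\nabla^\phi\widetilde\phi(\widetilde w)]_{i,j}$; this gives \eqref{eq:intpar-diff}. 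Then, for the continuity of $\widetilde w\mapsto D^\phi_{X_j}\widetilde\phi_i(\widetilde w)$, I would invoke \cite[Proposition~3.27]{ADDDLD20pr}, which asserts that continuity of the intrinsic differential $d\phi$ transfers to continuity of the intrinsic partial derivatives; combined with \eqref{eq:intpar-diff} this also says that $\widetilde w\mapsto[\nabla^\phi\widetilde\phi(\widetilde w)]_{i,j}$ is continuous, as expected.

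The main (and really only) obstacle is bookkeeping: one must verify that the definition of ``projected vector field'' and ``intrinsic partial derivative'' adopted here (Definitions~\ref{def:projvf} and \ref{intrinsicder}) coincide with those of \cite{ADDDLD20pr}, and that the orthogonality and horizontality hypotheses here imply (or specialize) the structural hypotheses under which the cited propositions are proved. In particular one should check that $(i_\W)_*(D^\phi_{X_j})$ is a well-defined continuous vector field on $\widetilde A$ whose integral curves are exactly the curves along which the intrinsic derivative is computed in \cite[Definition~3.?]{ADDDLD20pr}, using the reformulation $\big(D^{\phi}_{X_j}\big)_w= d(\pi_\W)_{\Phi(w)}\big((X_j)_{\Phi(w)}\big)$ noted after Definition~\ref{def:projvf} (cf.\ \cite[Remark~3.3]{ADDDLD20pr}). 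Once these identifications are made explicit, no further computation is needed. I would therefore present the proof as: (i) linearity of $d\phi_w$ and definition of $\nabla^\phi\widetilde\phi$; (ii) existence of $D^\phi_{X_j}\widetilde\phi_i(\widetilde w)$ and the identity \eqref{eq:intpar-diff} via \cite[Proposition~3.19]{ADDDLD20pr}; (iii) continuity via \cite[Proposition~3.27]{ADDDLD20pr}---each step being a one-line citation after the notational matching is in place.
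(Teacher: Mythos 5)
Your proposal is correct and follows essentially the same route as the paper, which offers no separate proof but justifies the proposition in the sentence preceding its statement by citing exactly \cite[Proposition 3.19]{ADDDLD20pr} for the existence and identification \eqref{eq:intpar-diff} and \cite[Proposition 3.27]{ADDDLD20pr} for the continuity. Your additional care about matching Definitions~\ref{def:projvf} and \ref{intrinsicder} with the conventions of \cite{ADDDLD20pr} is the same bookkeeping the paper leaves implicit.
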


\begin{deff}
	\label{intjacobian}
	Let $A \subset \W$ be an open set, let $w \in A$ and consider a map $\phi: A \to \V$ intrinsically differentiable at $w$.
	We introduce the \emph{intrinsic Jacobian of $\widetilde{\phi}$ at $\widetilde{w}$} as
	\begin{equation}\label{eq:Jacphi}
	J^{\phi} \widetilde{\phi} (\widetilde{w})=  \sqrt{ 1 + \sum_{\ell=1}^{\min\{p,n_1-p\}}\sum_{I \in \mathcal{I}_{\ell}}  (M^{\widetilde \phi}_I(\widetilde{w}))^2 },
	\end{equation}
	where $\mathcal{I}_{\ell}$ is the set of multiindexes
	$$ \{ (i_1, \dots, i_{\ell},j_1, \dots, j_{\ell})) \in \mathbb{N}^{2\ell} : p+1 \leq i_1 < i_2 < \dots < i_{\ell} \leq n_1, \ 1 \leq  j_1 < j_2 \dots < j_{\ell} \leq p \}. $$
	We have also introduced the minors
	\begin{equation*}
		M^{\widetilde{\phi}}_I(\widetilde{w}) =  \mathrm{det} \begin{pmatrix} 
			D^{\phi}_{X_{i_1}} \widetilde\phi_{j_1}(\widetilde{w}) & \dots & D^{\phi}_{X_{i_\ell}} \widetilde\phi_{j_1}(\widetilde{w}) \\
			\dots & \dots & \dots \\
			D^{\phi}_{X_{i_1}} \widetilde\phi_{j_{\ell}}(\widetilde{w}) & \dots & D^{\phi}_{X_{i_\ell}} \widetilde\phi_{j_{\ell}}(\widetilde{w}) \\
		\end{pmatrix}.
	\end{equation*}
\end{deff}

The previous definition is motivated by the observation that the Jacobian \eqref{eq:Jacphi} precisely is the Jacobian of the matrix
\begin{equation}\label{f:jacmat2}
	\begin{bmatrix}
		\nabla^{\phi}\widetilde\phi(\widetilde{w})\\ 
		\mathbb{I}_{n_1-p} 
	\end{bmatrix} \in \R^{n_1 \times (n_1-p)},
\end{equation}
taking into account Proposition~\ref{prop:components}.

\begin{teo}\label{teo:intJacobian}
Let $A \subset \W$ be an open set and consider a continuous map $\phi:A \to \V$. Let us assume that $\phi$ is intrinsically differentiable at each point $w \in A$ and suppose that $d\phi:A \to \IL_{\W,\V}$ is continuous. 
Then for every fixed $u\in A$, setting $\tilde u=i_\W(u)$, with  $i_\W$ defined in \eqref{i_W}, we have 
\begin{equation}\label{eq:eqJac}
J\Phi(u)= J^{\phi} \widetilde{\phi}(\tilde{u}).
\end{equation}

\end{teo}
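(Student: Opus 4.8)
The plan is to unwind both sides of \eqref{eq:eqJac} to a common Euclidean Jacobian computed with respect to the graded orthonormal basis $(v_1,\dots,v_p,w_{p+1},\dots,w_q)$. By Proposition~\ref{propjac}, we have $J\Phi(u)=\dfrac{|\bV\wedge\bW|}{|\bV\wedge\bU_u|}$ where $\bU_u$ is an orienting unit $(q-p)$-vector of $\U_u=\graph(d\phi_u)$. Since $(\W,\V)$ are orthogonal and $(v_1,\dots,v_p)$ is orthonormal spanning $\V$ with $(w_{p+1},\dots,w_q)$ orthonormal spanning $\W$, we get $|\bV\wedge\bW|=1$. So the whole statement reduces to proving
\begin{equation*}
\frac{1}{|\bV\wedge\bU_u|}=J^{\phi}\widetilde\phi(\tilde u),
\end{equation*}
that is, $|\bV\wedge\bU_u|=1/J^{\phi}\widetilde\phi(\tilde u)$. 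First I would produce an explicit orienting $(q-p)$-vector for $\U_u$. Using the projected vector fields $D^{\phi}_{X_j}$ and Proposition~\ref{prop:components}, I would show that a spanning set for $\U_u$ is given by the images $G(d\phi_u)(w_j)$, $j=p+1,\dots,q$; in fact the argument in the proof of Proposition~\ref{cont-jac} (the computation \eqref{eq:bch}--\eqref{eq:projvect}) already shows $\{G(d\phi_u)(w_j)\}_{j=p+1}^q$ is a basis of $\U_u$. The key extra input, which comes from $\V\subset V_1$ being horizontal and orthogonal to $\W$, is that for the horizontal-layer indices $j\in\{p+1,\dots,n_1\}$ the vector $G(d\phi_u)(w_j)$ is simply $w_j+d\phi_u(w_j)=w_j+\sum_{i=1}^p[\nabla^\phi\widetilde\phi(\tilde u)]_{i,j}v_i$ (no higher-order BCH corrections appear for $w_j\in V_1$), while for $j>n_1$ one has $G(d\phi_u)(w_j)=w_j+d\phi_u(w_j)+(\text{higher layers})$, and since $\V\subset V_1$ the differential $d\phi_u(w_j)$ for $w_j$ in layer $\ge 2$ must vanish by homogeneity, so $G(d\phi_u)(w_j)=w_j+(\text{corrections in }\W\cap(V_2\oplus\cdots))$.

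Next I would compute $|\bV\wedge\bU_u|$ via the Hodge star / Gram determinant trick, as in the proof of Theorem~\ref{teo:arealevelset}: writing $\bU_u=\lambda\,\bigwedge_{j=p+1}^q G(d\phi_u)(w_j)$ with $\lambda$ chosen so $|\bU_u|=1$, we get
\begin{equation*}
|\bV\wedge\bU_u|=|\lambda|\,\Bigl|v_1\wedge\cdots\wedge v_p\wedge\bigwedge_{j=p+1}^q G(d\phi_u)(w_j)\Bigr|,
\end{equation*}
and $|\lambda|=1/\bigl|\bigwedge_{j=p+1}^q G(d\phi_u)(w_j)\bigr|$. Now in the wedge $v_1\wedge\cdots\wedge v_p\wedge\bigwedge_j G(d\phi_u)(w_j)$ every $v_i$-component of $G(d\phi_u)(w_j)$ drops out (it is killed by the $v_1\wedge\cdots\wedge v_p$ already present), so this equals $v_1\wedge\cdots\wedge v_p\wedge\bigwedge_j w_j=\pm\,v_1\wedge\cdots\wedge v_q$, which has norm $1$. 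Hence $|\bV\wedge\bU_u|=1/\bigl|\bigwedge_{j=p+1}^q G(d\phi_u)(w_j)\bigr|$, and the statement reduces to
\begin{equation*}
\Bigl|\bigwedge_{j=p+1}^q G(d\phi_u)(w_j)\Bigr|=J^{\phi}\widetilde\phi(\tilde u).
\end{equation*}

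Finally I would compute $\bigl|\bigwedge_{j=p+1}^q G(d\phi_u)(w_j)\bigr|$ as $\sqrt{\det(\langle G(d\phi_u)(w_i),G(d\phi_u)(w_j)\rangle)}$, the Gram determinant, equivalently the square root of the sum of squares of all maximal minors of the $q\times(q-p)$ coordinate matrix of these vectors in the basis $(v_1,\dots,v_p,w_{p+1},\dots,w_q)$ (Cauchy--Binet). That coordinate matrix has the block form: the $w$-block rows form a unit-lower-triangular $(q-p)\times(q-p)$ matrix (the leading identity from $w_j\mapsto w_j$ plus higher-layer corrections only in strictly later layers), whose determinant is $1$; and the $v$-block rows are zero except in the columns $j\in\{p+1,\dots,n_1\}$, where they equal $\nabla^\phi\widetilde\phi(\tilde u)$. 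A column/row reduction using the unitriangular $w$-block shows the matrix is equivalent (same maximal minors' sum of squares) to $\begin{bmatrix}\nabla^\phi\widetilde\phi(\tilde u)&0\\ \mathbb I_{n_1-p}&0\\ 0&\mathbb I_{q-n_1}\end{bmatrix}$ after dropping the all-zero columns, which reduces the Gram determinant to that of the matrix in \eqref{f:jacmat2}. By Cauchy--Binet this square root of the sum of squared maximal minors is exactly $\sqrt{1+\sum_\ell\sum_{I\in\mathcal I_\ell}(M^{\widetilde\phi}_I(\tilde u))^2}=J^\phi\widetilde\phi(\tilde u)$, using Proposition~\ref{prop:components} to identify the minors. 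The main obstacle I anticipate is handling the higher-layer BCH corrections carefully: I must verify that the $w$-block really is unitriangular with respect to the graded order (corrections land only in strictly higher layers, hence strictly later basis indices) and that these corrections, together with the fact that $d\phi_u$ only sees the $V_1$ part of $\W$, do not contribute any new nonzero entries to the $v$-rows — this is where orthogonality of $(\W,\V)$ and horizontality of $\V$ are used essentially. Once that block structure is pinned down, the rest is the standard Cauchy--Binet identification of the graph Jacobian.
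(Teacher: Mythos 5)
Your argument is correct, but it takes a genuinely different route from the paper. The paper never passes through Proposition~\ref{propjac}: it computes the Euclidean Jacobian of the coordinate map $\widetilde G(d\phi_u)=i_\G\circ G(d\phi_u)\circ i_\W^{-1}$ at an arbitrary point, writes its Jacobian matrix in the block form \eqref{jacmat}, proves a combinatorial claim that the only nonvanishing maximal minors are those retaining all the lower identity blocks (hence reducing everything to the matrix \eqref{jacmat2}), and then identifies this Euclidean Jacobian with $J\Phi(u)$ through the definition \eqref{def:JacPhi} and the Euclidean area formula, using that the Jacobian is constant in $x$. You instead reduce to the multivector identity $J\Phi(u)=|\bV\wedge\bW|/|\bV\wedge\bU_u|$ with $|\bV\wedge\bW|=1$, exhibit the basis $\{G(d\phi_u)(w_j)\}_j$ of $\U_u$ (borrowed from the proof of Proposition~\ref{cont-jac}), kill the $\V$-components inside the full wedge to get $|\bV\wedge\bU_u|=|\bigwedge_jG(d\phi_u)(w_j)|^{-1}$, and finish with Gram/Cauchy--Binet. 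What your route buys is that it bypasses the Euclidean area formula and the constancy-in-$x$ argument entirely, replacing a Jacobian of a nonlinear map by the norm of a fixed simple multivector; what it costs is that you must justify the step $v_1\wedge\dots\wedge v_p\wedge\bigwedge_jG(d\phi_u)(w_j)=\pm v_1\wedge\dots\wedge v_q$, which after discarding the $\V$-components still requires the triangularity of the $\W$-projections (exactly \eqref{eq:projvect}). Both proofs converge on the same combinatorial core, namely that the sum of squared maximal minors of the full $q\times(q-p)$ block matrix equals that of \eqref{f:jacmat2}, so you still need the paper's Claim (or an equivalent row/column reduction) in full.

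One inaccuracy to fix: your parenthetical assertion that ``no higher-order BCH corrections appear for $w_j\in V_1$'' is false in step $\ge2$, since $w_j\,d\phi_u(w_j)=w_j+d\phi_u(w_j)+\tfrac12[w_j,d\phi_u(w_j)]+\dots$ and the bracket need not vanish. What is true --- and what your subsequent matrix description correctly uses --- is that these corrections lie in $V_2\oplus\dots\oplus V_\iota\subset\W$ (because $\V$ is horizontal and $\W\supset V_s$ for $s\ge2$), so they contribute only to strictly lower $w$-rows and never to the $v$-rows. The argument survives, but the sentence as written should be corrected.
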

\begin{proof}
We consider the graph map
$$
G(d\phi_u):\W \to \G, \ G(d\phi_{u})(w)=w d\phi_{u}(w).
$$
Setting $w=\sum_{i=p+1}^{q}x_iw_i \in \W$, with $x_i \in \R$, according to 
the BCH formula and to the definition of $\nabla^{\phi}\widetilde\phi(u)$, we get
\begin{align*}
G(d\phi_{u})(w)=&\Big( \sum_{i=p+1}^{q} x_iw_i \Big) \Big( \sum_{i=1}^p \Big( \sum_{j=1}^{h_1-p} [\nabla^{\phi}\widetilde\phi(u)]_{i,j} x_{j+p}\Big)v_i\Big)\\
=& \sum_{i=1}^p \Big(\sum_{j=1}^{h_1-p} [\nabla^{\phi}\widetilde\phi(u)]_{i,j} x_{j+p}\Big)v_i+ \sum_{i=p+1}^{q} x_i w_i \\
&+ \sum_{s=2}^\iota \sum_{j=h_{s-1}+1}^{h_s} Q_j\left(\sum_{i=p+1}^{h_{s-1}} x_i w_i, \sum_{i=1}^p \Big( \sum_{j=1}^{h_1-p} [\nabla^{\phi}\widetilde\phi(u)]_{i,j} x_{j+p}\Big)v_i \right)w_j.
\end{align*}
We wish to compute the Euclidean Jacobian of 
\begin{equation}\label{mapcoord}
\widetilde G(d\phi_u)= i_{\G}  \circ G(d\phi_{u}) \circ i_{\W}^{-1}:\R^{q-p} \to \R^q
\end{equation}
at an arbitrary point $x \in \R^{p-q}$.
Let us consider the Jacobian matrix of the map \eqref{mapcoord}.
From the above expression of $G(d\phi_{\bar w})$, such Jacobian matrix is of the following form
\begin{equation}\label{jacmat}
\begin{bmatrix}
\nabla^{\phi}\widetilde\phi(u) & 0_{p,n_2}  & 0_{p,n_3} & \ldots & 0_{p,n_{\iota}}\\
\mathbb{I}_{n_1-p} & 0_{n_1-p,n_2} & 0_{n_1-p,n_3} & \ldots & 0_{n_1-p,n_{\iota}}\\
\star & \mathbb{I}_{n_2}& 0_{n_2,n_3} & \ldots & 0_{n_2, n_{\iota}} \\
\ldots & \ldots & \ldots & \ldots & \ldots \\
\star & \star & \star & \mathbb{I}_{n_{\iota}-1} & 0_{n_{\iota}-1,n_{\iota}} \\
\star & \star & \star & \star & \mathbb{I}_{n_{\iota}} \\
\end{bmatrix} \in \R^{q \times (q-p)},
\end{equation}
where by $\star$ we have denoted suitable real matrices, possibly depending on  $x$.
The symbols $0_{i,j} \in \R^{i \times j}$  denote the null matrices and $\mathbb{I}_n$ 
represents the $n \times n$ identity matrix. In the next claim we study some properties of the $(q-p)\times(q-p)$ submatrices of \eqref{jacmat}.\\ 

\textbf{Claim.} If $S \in \R^{(q-p)\times (q-p)}$ is a submatrix  of \eqref{jacmat} such that $\det(S) \neq 0$, 
then $S$ is the square matrix exactly obtained by removing $p$ rows among the first $n_1$ rows of  \eqref{jacmat}.\\

\textit{Proof of Claim}. 
Let us consider the $j$th row of \eqref{jacmat}, which is one of the $p$ rows that we have removed. We wish to show that necessarily $j \leq n_1$.

We argue by contradiction, assuming first that $h_{\iota-1}< j \leq h_{\iota}$. 
Actually we are removing the $(j-p)$th row of the square matrix of \eqref{jacmat} 
made by the last $(q-p)$ rows. This shows that the $(j-p)$th column of $S$ is null, therefore $\det(S)=0$. We have proved that $1\le j \leq h_{\iota-1}$ and the last $n_{\iota}$ rows of $S$ and of \eqref{jacmat} coincide.
Then $S$ is a lower triangular block matrix of the form
\begin{equation}\label{eq:littletriangblock}
S=\begin{bmatrix}
A_{\iota-1} & 0_{q-p-n_{\iota},n_{\iota}}\\
\star & \mathbb{I}_{n_{\iota}}
\end{bmatrix},
\end{equation}
where $A_{\iota-1} \in \R^{(q-p-n_{\iota})\times (q-p-n_{\iota})}$ is a square matrix.
We now assume by contradiction that  $h_{\iota-2}< j \leq h_{\iota-1}$, hence
arguing as before, the special form of \eqref{jacmat} implies that the $(j-p)$th column of $A_{\iota-1}$ is null, therefore the block lower triangular form of \eqref{eq:littletriangblock}  gives $\det(S)=\det(A_{\iota-1})=0$. It follows that $1\le j \leq h_{\iota-2}$ and that the last $n_{\iota-1}+n_{\iota}$ rows of $S$ and of \eqref{jacmat} coincide.

We can iterate the previous arguments, hence getting some integer $2\le s\le\iota-2$
such that $S$ has the following form
\begin{equation}\label{eq:llowtblock}
S=\begin{bmatrix}
A_{s} & 0_{q-p-\sum_{i=s+1}^{\iota}n_i,n_{s+1}}& 0_{q-p-\sum_{i=s+1}^{\iota}n_i,n_{s+2}} & \ldots & 0_{q-p-\sum_{i=s+1}^{\iota}n_i, n_{\iota}} \\
\star & \mathbb{I}_{n_{s+1}}& 0_{n_{s+1},n_{s+2}} & \ldots & 0_{n_{s+1}, n_{\iota}} \\
\ldots & \ldots & \ldots & \ldots & \ldots \\
\star & \star & \star & \mathbb{I}_{n_{\iota-1}} & 0_{n_{\iota-1},n_{\iota}} \\
\star & \star & \star & \star & \mathbb{I}_{n_{\iota}} \\
\end{bmatrix},
\end{equation}
where $A_s \in \R^{(q-p-\sum_{i=s+1}^{\iota}n_i)\times(q-p-\sum_{i=s+1}^{\iota}n_i)}$.
Now, we argue by contradiction, assuming of having removed the $j$th row of \eqref{jacmat}
and $h_{s-1}<j\leq h_{s}$. Thus, as in the previous arguments, 
the block lower triangular form of \eqref{eq:llowtblock} yields 
$\det(S)=\det(A_s)=0$. Therefore, necessarily $j \leq h_{s-1}$. 
Then the argument must stop at $s=2$, therefore 
$1\le j \leq n_1$ and
\begin{equation}\label{eq:llowtblock2}
	S=\begin{bmatrix}
		A_{1} & 0_{q-p-\sum_{i=2}^{\iota}n_i,n_{2}}& 0_{q-p-\sum_{i=2}^{\iota}n_i,n_{3}} & \ldots & 0_{q-p-\sum_{i=2}^{\iota}n_i, n_{\iota}} \\
		\star & \mathbb{I}_{n_2}& 0_{n_{2},n_{3}} & \ldots & 0_{n_{2}, n_{\iota}} \\
		\ldots & \ldots & \ldots & \ldots & \ldots \\
		\star & \star & \star & \mathbb{I}_{n_{\iota-1}} & 0_{n_{\iota-1},n_{\iota}} \\
		\star & \star & \star & \star & \mathbb{I}_{n_{\iota}} \\
	\end{bmatrix},
\end{equation}
finally proving our claim.

Taking into account the block lower triangular form of \eqref{eq:llowtblock2}, according to which 
$\det(S)=\det(A_1)$, the previous claim also proves that there is a bijective correspondence between the
nonvanishing $(q-p)$ minors of \eqref{jacmat} and the nonvanishing $(n_1-p)$ minors of the matrix
\begin{equation}\label{jacmat2}
	\begin{bmatrix}
		\nabla^{\phi}\widetilde\phi(u)\\ 
		\mathbb{I}_{n_1-p} 
	\end{bmatrix} \in \R^{n_1 \times (n_1-p)}.
\end{equation}
As a consequence, the Euclidean Jacobian $J\widetilde G(d\phi_u)(x)$ coincides with the Jacobian of the matrix \eqref{jacmat2}, therefore it is independent of $x \in \R^{p-q}$. Moreover, it is easy to verify that the Jacobian of the matrix \eqref{jacmat2} exactly corresponds to
$J^{\phi} \widetilde{\phi}(\tilde{u})$ introduced in \eqref{eq:Jacphi}. Thus, we have proved that for every $x \in \R^{p-q}$ it holds
\begin{equation}\label{eq:uguaglJac}
J(\widetilde G(d\phi_u))(x)=J^{\phi} \widetilde{\phi}(\tilde{u}).
	\end{equation}
Now, we wish to conclude the proof by showing that 
\begin{equation}\label{eq:uguaglJac2}
	J(\widetilde G(d\phi_u))(x)=J\Phi(u).
\end{equation}
Let us recall the definition of Jacobian in \eqref{def:JacPhi}. Then for every Borel set $B \subset \W$ we have
\begin{equation}
	J\Phi(u)=\frac{\mathcal{H}_{|\cdot|}^{q-p}(G(d\phi_u)(B))}{\mathcal{H}^{q-p}_{|\cdot|}(B)}=\frac{\mathcal{H}_{| \cdot |}^{q-p}(\widetilde G(d\phi_{u})(\widetilde B))}{\mathcal{H}^{q-p}_{| \cdot |}(\widetilde B)},
	\end{equation}
where $\widetilde B=i_\W(B)$ and we have exploited that $i_\W$ and $i_\G$ are isometries. Finally, the Euclidean area formula and
the independence of $J\widetilde G(d\phi_u)(x)$ of $x$ imply \eqref{eq:uguaglJac2}, hence concluding the proof.
\end{proof}

\begin{teo}\label{areaintder}
	Let $A \subset \W$ be an open set and consider a continuous map $\phi:A \to \V$. Assume that $\phi$ is intrinsically differentiable at every $w \in A$ and assume that $d\phi:A \to \IL_{\W,\V}$ is continuous. Set $\Sigma=  \mathrm{graph}(\phi)$.
	Then, for every Borel set $B \subset \Sigma$ we have
	\begin{equation}\label{eq:areaintder}
		\int_{ \widetilde{\Phi}^{-1}(B)} J^{\phi} \widetilde{\phi}(\widetilde{w} )\ d \mathcal{L}^{q-p}(\widetilde{w})=\int_B \beta_{d}(\T_x) \ d \mathcal{S}^{Q-p}(x),
	\end{equation}
	where $\T_x$ is the tangent subgroup to $\Sigma$ at $x$ as in Definition~\ref{TangenteP}.
\end{teo}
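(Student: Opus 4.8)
The plan is to deduce Theorem~\ref{areaintder} as a specialization of the general area formula in Theorem~\ref{teo:area}, using Theorem~\ref{teo:intJacobian} to rewrite the intrinsic Jacobian $J\Phi$ in terms of the explicit expression $J^\phi\widetilde\phi$. First I would verify that the hypotheses of Theorem~\ref{teo:area} are satisfied: we are given a couple $(\W,\V)$ of orthogonal complementary subgroups of the stratified group $\G$, with $\V\subset V_1$ horizontal, an open set $A\subset\W$, and a continuous map $\phi:A\to\V$ that is intrinsically differentiable at every point of $A$ with continuous intrinsic differential $d\phi:A\to\IL(\W,\V)$. These are exactly the standing assumptions needed for Theorem~\ref{teo:area}, so for every Borel set $B\subset\Sigma$ we obtain
\begin{equation}\label{eq:planstep1}
\int_{\Phi^{-1}(B)} J\Phi(n)\ d\mathcal{H}^{q-p}_{|\cdot|}(n)=\int_B \beta_d(\T_x)\ d\mathcal{S}^{Q-p}(x),
\end{equation}
where we have used that the topological dimension of $\W$ is $m=q-p$ and its Hausdorff dimension is $M=Q-p$, the latter because $\V$ is horizontal of topological dimension $p$ (so $\V\subset V_1$ contributes weight $1$ to each of its $p$ dimensions, and $Q-M=p$).

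Next I would perform the change of variables in the left-hand side of \eqref{eq:planstep1} passing from $\W$ to $\R^{q-p}$ via the isometry $i_\W$ of \eqref{i_W}. Since $i_\W$ is an isometry between $\W$ (with the induced Euclidean structure) and $\R^{q-p}$, it carries $\mathcal{H}^{q-p}_{|\cdot|}\llcorner\W$ to the Lebesgue measure $\mathcal{L}^{q-p}$, and it identifies $\Phi^{-1}(B)\subset\W$ with $\widetilde\Phi^{-1}(B)=i_\W(\Phi^{-1}(B))\subset\R^{q-p}$, where $\widetilde\Phi=\Phi\circ i_\W^{-1}$ as defined in the section. Thus
\begin{equation}\label{eq:planstep2}
\int_{\Phi^{-1}(B)} J\Phi(n)\ d\mathcal{H}^{q-p}_{|\cdot|}(n)=\int_{\widetilde\Phi^{-1}(B)} J\Phi(i_\W^{-1}(\widetilde w))\ d\mathcal{L}^{q-p}(\widetilde w).
\end{equation}
Then I would invoke Theorem~\ref{teo:intJacobian}, which asserts precisely that $J\Phi(u)=J^\phi\widetilde\phi(i_\W(u))$ for every $u\in A$; equivalently $J\Phi(i_\W^{-1}(\widetilde w))=J^\phi\widetilde\phi(\widetilde w)$ for every $\widetilde w\in\widetilde A$. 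Substituting this into \eqref{eq:planstep2} and combining with \eqref{eq:planstep1} yields
\begin{equation}\label{eq:planstep3}
\int_{\widetilde\Phi^{-1}(B)} J^\phi\widetilde\phi(\widetilde w)\ d\mathcal{L}^{q-p}(\widetilde w)=\int_B \beta_d(\T_x)\ d\mathcal{S}^{Q-p}(x),
\end{equation}
which is exactly \eqref{eq:areaintder}.

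Honestly, there is very little obstacle here: the real content has already been established in Theorem~\ref{teo:area} (the upper blow-up and the measure-theoretic area formula) and in Theorem~\ref{teo:intJacobian} (the combinatorial/linear-algebra identification of the intrinsic Jacobian with $J^\phi\widetilde\phi$, via the block-triangular structure of the Jacobian matrix \eqref{jacmat} and the minor count). The only points requiring a line of care are the bookkeeping of dimensions ($m=q-p$, $M=Q-p$ using that $\V$ is horizontal) and the observation that $i_\W$ being an isometry makes the pushforward of $\mathcal{H}^{q-p}_{|\cdot|}$ equal to $\mathcal{L}^{q-p}$, so that the left-hand integral transfers cleanly. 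I would also remark that the orthogonality of $(\W,\V)$ is what guarantees the graded basis $(v_1,\dots,v_p,w_{p+1},\dots,w_q)$ used to define $i_\W,i_\V,i_\G$ is orthonormal, which is implicitly needed for $i_\W$ to be an isometry. With these remarks in place the proof is a two-step composition: apply Theorem~\ref{teo:area}, then rewrite the Jacobian by Theorem~\ref{teo:intJacobian}.
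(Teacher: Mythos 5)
Your proposal is correct and follows exactly the paper's own route: apply Theorem~\ref{teo:area}, identify $J\Phi$ with $J^{\phi}\widetilde{\phi}$ via Theorem~\ref{teo:intJacobian}, and transfer the integral from $\W$ to $\R^{q-p}$ using that $i_\W$ is an isometry, so $(i_\W)_\sharp \mathcal{H}^{q-p}_{|\cdot|}=\mathcal{L}^{q-p}$. The dimensional bookkeeping $m=q-p$, $M=Q-p$ (from $\V$ being horizontal) is also exactly what the paper relies on, so nothing further is needed.
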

\begin{proof}
Combining Theorem \ref{teo:area} and Theorem \ref{teo:intJacobian} we get that for every Borel set $B \subset \Sigma$ the equality
	\begin{equation}
		\int_B \beta_{d}(\T_x) \ d \mathcal{S}^{Q-p}(x)=\int_{\Phi^{-1}(B)} J^{\phi} \widetilde{\phi}(i_\W(w)) \  d \mathcal{H}_{|\cdot|}^{q-p} (w).
	\end{equation}
	holds. Taking into account the equalities 
	\begin{equation}
	(i_\W)_\sharp \cH^{q-p}_{|\cdot|}=\cH^{q-p}_E=\cL^{q-p}
	\end{equation}
	a standard measure-theoretic change of variables gives
	\begin{equation}
		\int_{\Phi^{-1}(B)} J^{\phi} \widetilde{\phi}(i_\W(w)) \  d \mathcal{H}_{|\cdot|}^{q-p} (w)=	\int_{ \widetilde{\Phi}^{-1}(B)} J^{\phi} \widetilde{\phi}(\widetilde{w} )\ d \mathcal{L}^{q-p}(\widetilde{w}),
	\end{equation}
	concluding the proof.
\end{proof}

Combining Theorem~\ref{areaintder} and Theorem~\ref{teo:areasymm} with the results in \cite{Mag22RS,CorMag25} it is not difficult to find important examples of homogeneous distances $d$ for which the area formula \eqref{eq:areaintder} takes the simpler form
\begin{equation}\label{eq:Jconcrete} 
		\mathcal{S}_d^{Q-p} \llcorner \Sigma(B)=\int_{ \widetilde{\Phi}^{-1}(B)} J^{\phi} \widetilde{\phi}(\widetilde{w} )\ d \mathcal{L}^{q-p}(\widetilde{w}).
	\end{equation}
In fact, if $\U \in \mathcal{F}_{\V}$, i.e. $\U$ is a homogeneous subgroup complementary to $\V$, then necessarily it is $(q-p)$-dimensional and there is a linear subspace $U_1 \subset V_1$ such that
\begin{equation}\label{eq:forma}
\U=U_1 \oplus V_2 \oplus \dots \oplus V_{\iota}.
\end{equation}
More generally, it is not difficult to notice that the Cygan--Kor\'anyi distance in H-type groups, \cite{Cygan81}, the distances arising from \cite[Theorem~2]{HebSik90} and the distance $d_\infty$ of \cite[Section~2.1]{FSSC6} are
all multiradial, according to \cite[Definition~1.3]{CorMag25}. Due to \cite[Theorem~3.3]{CorMag25}, it follows that the previous distances 
are rotationally symmetric with respect to the family of homogeneous subgroups $\cF_\V$ of  Theorem~\ref{teo:areasymm} , and in particular \eqref{eq:Jconcrete} follows.
We also have a more general result, \cite[Theorem~1.3]{CorMag25}, that precisely relies on our Theorem~\ref{teo:areasymm}.
Other symmetry results are available if the metric unit ball with respect to $d$ is only assumed to be convex, or if it is $\n$-vertically symmetric, see \cite[Theorem~1.1 and Theorem~1.4]{Mag22RS}.


\begin{thebibliography}{10}

\bibitem{AKLD2009TangSpGro}
Luigi Ambrosio, Bruce Kleiner, and Enrico Le~Donne.
\newblock Rectifiability of sets of finite perimeter in {C}arnot groups:
  existence of a tangent hyperplane.
\newblock {\em J. Geom. Anal.}, 19(3):509--540, 2009.

\bibitem{ASCV06}
Luigi Ambrosio, Francesco Serra~Cassano, and Davide Vittone.
\newblock Intrinsic regular hypersurfaces in {H}eisenberg groups.
\newblock {\em J. Geom. Anal.}, 16(2):187--232, 2006.

\bibitem{ADDDLD24}
Gioacchino Antonelli, Daniela Di~Donato, Sebastiano Don, and Enrico Le~Donne.
\newblock Characterizations of uniformly differentiable co-horizontal intrinsic
  graphs in {C}arnot groups.
\newblock {\em Ann. Inst. Fourier (Grenoble)}, 74(6):2523--2621, 2024.

\bibitem{ADDD22}
Gioacchino Antonelli, Daniela~Di Donato, and Sebastiano Don.
\newblock Distributional solutions of {B}urgers' type equations for intrinsic
  graphs in {C}arnot groups of step 2.
\newblock {\em Potential {A}nal.}, 2022.

\bibitem{AntMer22}
Gioacchino Antonelli and Andrea Merlo.
\newblock On rectifiable measures in {C}arnot groups: representation.
\newblock {\em Calc. Var. Partial Differential Equations}, 61(1):Paper No. 7,
  52, 2022.

\bibitem{AntYou24pr}
Gioacchino Antonelli and Robert Young.
\newblock Vertical curves and vertical fibers in the {Heisenberg} group.
\newblock Preprint, {arXiv}:2411.00232, 2024.

\bibitem{AS2009}
Gabriella Arena and Raul Serapioni.
\newblock Intrinsic regular submanifolds in {Heisenberg} groups are
  differentiable graphs.
\newblock {\em Calculus of Variations and Partial Differential Equations},
  35(4):517--536, 2009.

\bibitem{BigCarSerC2015}
F.~Bigolin, L.~Caravenna, and F.~Serra~Cassano.
\newblock Intrinsic {L}ipschitz graphs in {H}eisenberg groups and continuous
  solutions of a balance equation.
\newblock {\em Ann. Inst. H. Poincar\'e{} C Anal. Non Lin\'eaire},
  32(5):925--963, 2015.

\bibitem{BSC10DistSol}
Francesco Bigolin and Francesco~Serra Cassano.
\newblock Distributional solutions of {B}urgers' equation and intrinsic regular
  graphs in {H}eisenberg groups.
\newblock {\em J. Math. Anal. Appl.}, 366(2):561--568, 2010.

\bibitem{CFO2019bound}
Vasileios Chousionis, Katrin F\"{a}ssler, and Tuomas Orponen.
\newblock Boundedness of singular integrals on {$C^{1,\alpha}$} intrinsic
  graphs in the {H}eisenberg group.
\newblock {\em Adv. Math.}, 354:106745, 45, 2019.

\bibitem{CFO2019beta}
Vasileios Chousionis, Katrin F\"{a}ssler, and Tuomas Orponen.
\newblock Intrinsic {L}ipschitz graphs and vertical {$\beta$}-numbers in the
  {H}eisenberg group.
\newblock {\em Amer. J. Math.}, 141(4):1087--1147, 2019.

\bibitem{CLY22}
Vasileios Chousionis, Sean Li, and Robert Young.
\newblock The strong geometric lemma for intrinsic {L}ipschitz graphs in
  {H}eisenberg groups.
\newblock {\em J. Reine Angew. Math.}, 784:251--274, 2022.

\bibitem{CMPSC14}
Giovanna Citti, Maria Manfredini, Andrea Pinamonti, and Francesco
  Serra~Cassano.
\newblock Smooth approximation for intrinsic {L}ipschitz functions in the
  {H}eisenberg group.
\newblock {\em Calc. Var. Partial Differential Equations}, 49(3-4):1279--1308,
  2014.

\bibitem{Cor21}
Francesca Corni.
\newblock Intrinsic regular surfaces of low codimension in {H}eisenberg groups.
\newblock {\em Ann. Fenn. Math.}, 46(1):79--121, 2021.

\bibitem{CorniPhD}
Francesca Corni.
\newblock {\em Low codimensional intrinsic regular submanifolds in the
  Heisenberg group $\mathbb H^n$}.
\newblock PhD thesis, Universit{\`a} di Bologna, 2021.

\bibitem{CorMag22}
Francesca Corni and Valentino Magnani.
\newblock Area formula for regular submanifolds of low codimension in
  {H}eisenberg groups.
\newblock {\em Adv. Calc. Var.}, 16(3):665--688, 2023.

\bibitem{CorMag25}
Francesca Corni and Valentino Magnani.
\newblock Symmetry results for the area formula in homogeneous groups.
\newblock {\em J. Math. Anal. Appl.}, 546(2):Paper No. 129238, 17, 2025.

\bibitem{Cygan81}
Jacek Cygan.
\newblock Subadditivity of homogeneous norms on certain nilpotent {L}ie groups.
\newblock {\em Proc. Amer. Math. Soc.}, 83(1):69--70, 1981.

\bibitem{DiD21}
Daniela Di~Donato.
\newblock Intrinsic differentiability and intrinsic regular surfaces in
  {C}arnot groups.
\newblock {\em Potential Anal.}, 54(1):1--39, 2021.

\bibitem{DDF22}
Daniela Di~Donato and Katrin F\"{a}ssler.
\newblock Extensions and corona decompositions of low-dimensional intrinsic
  {L}ipschitz graphs in {H}eisenberg groups.
\newblock {\em Ann. Mat. Pura Appl. (4)}, 201(1):453--486, 2022.

\bibitem{DFO2022}
Daniela Di~Donato, Katrin F\"{a}ssler, and Tuomas Orponen.
\newblock Metric rectifiability of {$\Bbb{H}$}-regular surfaces with
  {H}\"{o}lder continuous horizontal normal.
\newblock {\em Int. Math. Res. Not. IMRN}, (22):17909--17975, 2022.

\bibitem{DiMJulNGoloVit25}
Marco Di~Marco, Antoine Julia, Sebastiano Nicolussi~Golo, and Davide Vittone.
\newblock Submanifolds with boundary and {S}tokes' theorem in {H}eisenberg
  groups.
\newblock {\em Trans. Amer. Math. Soc.}, 378(7):4955--4990, 2025.

\bibitem{DonLDMV2022}
Sebastiano Don, Enrico Le~Donne, Terhi Moisala, and Davide Vittone.
\newblock A rectifiability result for finite-perimeter sets in carnot groups.
\newblock {\em Indiana Univ. Math. J.}, 71(5):2233--2258, 2022.

\bibitem{DonMag2023}
Sebastiano Don and Valentino Magnani.
\newblock Surface measure on, and the local geometry of, sub-{R}iemannian
  manifolds.
\newblock {\em Calc. Var. Partial Differential Equations}, 62(9):Paper No. 254,
  42, 2023.

\bibitem{FOR20}
Katrin F\"{a}ssler, Tuomas Orponen, and S\'{e}verine Rigot.
\newblock Semmes surfaces and intrinsic {L}ipschitz graphs in the {H}eisenberg
  group.
\newblock {\em Trans. Amer. Math. Soc.}, 373(8):5957--5996, 2020.

\bibitem{Fol75}
Gerald~B. Folland.
\newblock Subelliptic estimates and function spaces on nilpotent lie groups.
\newblock {\em Ark. Mat.}, 13(2):161--207, 1975.

\bibitem{FS82}
Gerald~B. Folland and Elias~M. Stein.
\newblock {\em Hardy Spaces on Homogeneous groups}.
\newblock Princeton University Press, 1982.

\bibitem{FMS14}
Bruno Franchi, Marco Marchi, and Raul~Paolo Serapioni.
\newblock Differentiability and approximate differentiability for intrinsic
  {L}ipschitz functions in {C}arnot groups and a {R}ademacher theorem.
\newblock {\em Anal. Geom. Metr. Spaces}, 2:258--281, 2014.

\bibitem{FraPan25pr}
Bruno Franchi and Pierre Pansu.
\newblock Currents in {H}eisenberg groups.
\newblock Preprint, {arXiv}:2511.18895, 2025.

\bibitem{FSSC03}
Bruno Franchi, Raul Serapioni, and Francesco~Serra Cassano.
\newblock Regular hypersurfaces, intrinsic perimeter and implicit function
  theorem in {C}arnot groups.
\newblock {\em Comm. Anal. Geom.}, 5:909--944, 2003.

\bibitem{FSSC01}
Bruno Franchi, Raul Serapioni, and Francesco Serra~Cassano.
\newblock Rectifiability and perimeter in the {H}eisenberg group.
\newblock {\em Math. Ann.}, 321(3):479--531, 2001.

\bibitem{FSSC5}
Bruno Franchi, Raul Serapioni, and Francesco Serra~Cassano.
\newblock On the structure of finite perimeter sets in step 2 {C}arnot groups.
\newblock {\em J. Geom. Anal.}, 13(3):421--466, 2003.

\bibitem{FSSC06}
Bruno Franchi, Raul Serapioni, and Francesco Serra~Cassano.
\newblock Intrinsic {L}ipschitz graphs in {H}eisenberg groups.
\newblock {\em J. Nonlinear Convex Anal.}, 7(3):423--441, 2006.

\bibitem{FSSC6}
Bruno Franchi, Raul Serapioni, and Francesco Serra~Cassano.
\newblock Regular submanifolds, graphs and area formula in {H}eisenberg groups.
\newblock {\em Adv. Math.}, 211(1):152--203, 2007.

\bibitem{FSSC15}
Bruno Franchi, Raul~P. Serapioni, and Francesco Serra~Cassano.
\newblock Area formula for centered {H}ausdorff measures in metric spaces.
\newblock {\em Nonlinear Anal.}, 126:218--233, 2015.

\bibitem{FranchiSerapioni2016IntrLip}
Bruno Franchi and Raul~Paolo Serapioni.
\newblock Intrinsic {L}ipschitz graphs within {C}arnot groups.
\newblock {\em J. Geom. Anal.}, 26(3):1946--1994, 2016.

\bibitem{GMS98}
Mariano Giaquinta, Giuseppe Modica, and Ji\v{r}\'{\i} Sou\v{c}ek.
\newblock {\em Cartesian currents in the calculus of variations. {I}},
  volume~37.
\newblock Springer-Verlag, Berlin, 1998.

\bibitem{HakHeiIko25pr}
Eero Hakavuori, Susanna Heikkil{\"a}, and Toni Ikonen.
\newblock Smooth contact lifts to central extensions of {Carnot} groups.
\newblock Preprint, {arXiv}:2508.14647, 2025.

\bibitem{HebSik90}
Waldemar Hebisch and Adam Sikora.
\newblock A smooth subadditive homogeneous norm on a homogeneous group.
\newblock {\em Studia Math.}, 96(3):231--236, 1990.

\bibitem{JNGV21}
Antoine Julia, Sebastiano Nicolussi~Golo, and Davide Vittone.
\newblock Nowhere differentiable intrinsic {L}ipschitz graphs.
\newblock {\em Bull. Lond. Math. Soc.}, 53(6):1766--1775, 2021.

\bibitem{JNGV22}
Antoine Julia, Sebastiano Nicolussi~Golo, and Davide Vittone.
\newblock Area of intrinsic graphs and coarea formula in {C}arnot groups.
\newblock {\em Math. Z.}, 301(2):1369--1406, 2022.

\bibitem{KirSer04}
Bernd Kirchheim and Francesco Serra~Cassano.
\newblock Rectifiability and parameterization of intrinsic regular surfaces in
  the {H}eisenberg group.
\newblock {\em Ann. Sc. Norm. Super. Pisa Cl. Sci. (5)}, 3(4):871--896, 2004.

\bibitem{LecMag22}
Giacomo~Maria Leccese and Valentino Magnani.
\newblock A study of measure-theoretic area formulas.
\newblock {\em Ann. Mat. Pura Appl. (4)}, 201(3):1505--1524, 2022.

\bibitem{Mag}
Valentino Magnani.
\newblock Differentiability and area formula on stratified {L}ie groups.
\newblock {\em Houston J. Math.}, 27(2):297--323, 2001.

\bibitem{Mag5}
Valentino Magnani.
\newblock Characteristic points, rectifiability and perimeter measure on
  stratified groups.
\newblock {\em J. Eur. Math. Soc.}, 8(4):585--609, 2006.

\bibitem{Mag12A}
Valentino Magnani.
\newblock Non-horizontal submanifolds and coarea formula.
\newblock {\em J. Anal. Math.}, 106:95--127, 2008.

\bibitem{Mag2011cm}
Valentino Magnani.
\newblock An area formula in metric spaces.
\newblock {\em Colloq. Math.}, 124(2):275--283, 2011.

\bibitem{Mag14}
Valentino Magnani.
\newblock Towards differential calculus in stratified groups.
\newblock {\em J. Aust. Math. Soc.}, 95(1):76--128, 2013.

\bibitem{Mag30}
Valentino Magnani.
\newblock On a measure-theoretic area formula.
\newblock {\em Proc. Roy. Soc. Edinburgh Sect. A}, 145:885--891, 2015.

\bibitem{Mag31}
Valentino Magnani.
\newblock A new differentiation, shape of the unit ball, and perimeter measure.
\newblock {\em Indiana Univ. Math. J.}, 66(1):183--204, 2017.

\bibitem{Mag22RS}
Valentino Magnani.
\newblock Rotational symmetries and spherical measure in homogeneous groups.
\newblock {\em J. Geom. Anal.}, 32(4):Paper No. 119, 31, 2022.

\bibitem{MagMir26inpr}
Valentino Magnani and Jeremy Mirmina.
\newblock Unrectifiable surfaces with intrinsic regularity in {C}arnot groups.
\newblock in preparation.

\bibitem{Mer22}
Andrea Merlo.
\newblock Geometry of 1-codimensional measures in {H}eisenberg groups.
\newblock {\em Invent. Math.}, 227(1):27--148, 2022.

\bibitem{Mer23}
Andrea Merlo.
\newblock Marstrand-{M}attila rectifiability criterion for 1-codimensional
  measures in {C}arnot groups.
\newblock {\em Anal. PDE}, 16(4):927--996, 2023.

\bibitem{NY18}
Assaf Naor and Robert Young.
\newblock Vertical perimeter versus horizontal perimeter.
\newblock {\em Ann. of Math. (2)}, 188(1):171--279, 2018.

\bibitem{NY22}
Assaf Naor and Robert Young.
\newblock Foliated corona decompositions.
\newblock {\em Acta Math.}, 229(1):55--200, 2022.

\bibitem{SerraCassano2016}
Francesco Serra~Cassano.
\newblock Some topics of geometric measure theory in {C}arnot groups.
\newblock In {\em Geometry, Analysis and Dynamics on sub-{R}iemannian
  manifolds. {V}ol. 1}, EMS Ser. Lect. Math., pages 1--121. Eur. Math. Soc.,
  Z\"urich, 2016.

\bibitem{SteinICM1977}
Elias~M. Stein.
\newblock {\em Some problems in harmonic analysis suggested by symmetric spaces
  and semi-simple groups}, volume~1 of {\em ICM Series}.
\newblock Gauthier-Villars, 1971.
\newblock Actes du congr\`es international des math\'ematiciens, Nice, France,
  1970.

\bibitem{Stein93}
Elias~M. Stein.
\newblock {\em Harmonic analysis: real-variable methods, orthogonality, and
  oscillatory integrals}, volume~43 of {\em Princeton Mathematical Series}.
\newblock Princeton University Press, Princeton, NJ, 1993.

\bibitem{Var84Lie}
V.~S. Varadarajan.
\newblock {\em Lie groups, {L}ie algebras, and their representations}, volume
  102 of {\em Graduate Texts in Mathematics}.
\newblock Springer-Verlag, New York, 1984.
\newblock Reprint of the 1974 edition.

\bibitem{Vit22}
Davide Vittone.
\newblock Lipschitz graphs and currents in {H}eisenberg groups.
\newblock {\em Forum Math. Sigma}, 10:Paper No. e6, 104, 2022.

\end{thebibliography}

\end{document}